\tikzset{dynkdot/.style={circle,draw,scale=.38}}
\newcommand{\arxiv}[1]{\href{http://arxiv.org/abs/#1}{\texttt{arXiv:#1}}}
\newcommand{\nc}{\newcommand}
\numberwithin{equation}{section}
\newenvironment{red}{\relax\color{red}}{\relax}
\newenvironment{blue}{\relax\color{blue}}{\hspace*{.5ex}\relax}
\newenvironment{purple}{\relax\color{blue}}{\hspace*{.5ex}\relax}
\newenvironment{magenta}{\relax\color{magenta}}{\hspace*{.5ex}\relax}
\newenvironment{jaune}{\relax\color{YellowOrange}}{\hspace*{.5ex}\relax}
\newcommand{\bep}{\begin{purple}}
\newcommand{\eep}{\end{purple}}
\nc{\bey}{\begin{jaune}}
  \nc{\ey}{\end{jaune}}
\newcommand{\beb}{\begin{blue}}
\newcommand{\eb}{\end{blue}}
\newcommand{\bem}{\begin{magenta}}
\newcommand{\eem}{\end{magenta}}
\newcommand{\bero}[1]{\begin{red}{}\marginnote{\fbox{\scshape\lowercase{O}}}%
#1}
\newcommand{\berMH}[1]{\begin{red}{}\marginnote{\fbox{\scshape\lowercase{MH}}}%
#1}  % Masaki
\newcommand{\berE}[1]{\begin{red}{}\marginnote{\fbox{\scshape\lowercase{E}}}%
#1}
\nc{\hs}{\hspace*}
\nc{\ms}{\mspace}
\nc{\qR}[1]{\ttq_{\mspace{-2mu}\raisebox{-.8ex}{${\scriptstyle{#1}}$}}}
\theoremstyle{plain}
\newtheorem{lemma}{Lemma}[section]
\newtheorem{proposition}[lemma]{Proposition}
\newtheorem{theorem}[lemma]{Theorem}
\newtheorem{corollary}[lemma]{Corollary}
\newtheorem{conjecture}{Conjecture}
\newtheorem*{convention}{Convention}
\theoremstyle{definition}
\newtheorem{remark}[lemma]{Remark}
\newtheorem{definition}[lemma]{Definition}
\nc{\Prop}{\begin{proposition}}
  \nc{\enprop}{\end{proposition}}
\nc{\Def}{\begin{definition}}
  \nc{\edf}{\end{definition}}
\newcommand{\Zq}{{\Z[q^{\pm1}]}}
\renewcommand{\le}{\leqslant}
\renewcommand{\ge}{\geqslant}
\renewcommand{\preceq}{\preccurlyeq}
\newcommand{\seteq}{\mathbin{:=}}
\newcommand{\soplus}{\mathop{\mbox{\normalsize$\bigoplus$}}\limits}
\newcommand{\sbcup}{\mathop{\mbox{\normalsize$\bigcup$}}\limits}
\newcommand{\tens}{\mathop\otimes}
\newcommand{\g}{\mathfrak{g}}
\newcommand{\n}{\mathfrak{n}}
\newcommand{\Q}{\mathbb{Q}}
\newcommand{\Z}{\mathbb{Z}\ms{1mu}}
\newcommand{\al}{{\ms{1mu}\alpha}}
\newcommand{\la}{\lambda}
\newcommand{\be}{{\ms{1mu}\beta}}
\newcommand{\vph}{\varphi}
\newcommand{\upal}{\upalpha}
\newcommand{\upve}{\upvarepsilon}
\newcommand{\upvp}{\upvarphi}
\newcommand{\wt}{{\rm wt}}
\newcommand{\rev}{{\rm rev}}
\newcommand{\up}{{\rm up}}
\newcommand{\ii}{ \bfi }
\newcommand{\Hom}{\operatorname{Hom}}
\newcommand{\End}{\operatorname{End}}
\newcommand{\tU}{\widetilde{U}}
\newcommand{\te}{\widetilde{e}}
\newcommand{\tf}{\widetilde{f}}
\newcommand{\hB}{\widehat{B}}
\newcommand{\hg}{\widehat{\g}}
 \newcommand{\ocalD}{\overline{\calD}}
\newcommand{\uw}{{\underline{w}}}
\newcommand{\sfC}{\mathsf{C}}
\newcommand{\sfD}{\mathsf{D}}
\newcommand{\sfP}{\mathsf{P}}
\newcommand{\sfQ}{\mathsf{Q}}
\newcommand{\sfR}{\mathsf{R}}
\newcommand{\sfS}{\mathsf{S}}
\newcommand{\sfW}{\mathsf{W}}
\newcommand{\sfc}{\mathsf{c}}
\newcommand{\sfd}{\mathsf{d}}
\nc{\bbA}{{\Zq}}
\newcommand{\bsa}{\boldsymbol{a}}
\newcommand{\bsu}{\boldsymbol{u}}
\newcommand{\bsv}{\boldsymbol{v}}
\nc{\bfA}{{\Z[q^{\pm1/2}]}}
\newcommand{\bfG}{\mathbf{G}}
\newcommand{\bfL}{\mathbf{L}}
\newcommand{\bfP}{\mathbf{P}}
\newcommand{\tbfG}{\widetilde{\bfG}}
\newcommand{\bfb}{\mathbf{b}}
\newcommand{\bfi}{\mathbf{i}}
\newcommand{\bfk}{\mathbf{k}}
\newcommand{\calA}{\mathcal{A}}
\newcommand{\calD}{\mathcal{D}}
\newcommand{\calE}{\mathcal{E}}
\newcommand{\calF}{\mathcal{F}}
\newcommand{\calK}{\mathcal{K}}
\newcommand{\calP}{\mathcal{P}}
\newcommand{\hcalA}{\widehat{\calA}}
\newcommand{\scrC}{\mathscr{C}}
\newcommand{\scrS}{\mathscr{S}}
\newcommand{\tscrS}{\widetilde{\scrS}}
\newcommand{\ttB}{\mathtt{B}}
\newcommand{\ttP}{\ms{2mu}\mathtt{P}\ms{2mu}}
\newcommand{\ttb}{\mathtt{b}}
\newcommand{\ttq}{\mathtt{q}}
\newcommand{\rmE}{\mathrm{E}}
\newcommand{\rmG}{\mathrm{G}}
\newcommand{\rmL}{\mathrm{L}}
\newcommand{\rmP}{\mathrm{P}}
\newcommand{\To}[1][{\hspace{2ex}}]{\xrightarrow{\,#1\,}}
\newlength{\mylength}
\newcommand*{\para}{%
  \rlap{\rotatebox{-30}{\rule[.05ex]{.4pt}{.77em}}}%
  \kern.04em%
  \rlap{\kern.36em\raisebox{0.649519052835em}{\rule{.6em}{.4pt}}}%
  \rule{.6em}{.4pt}\kern-.04em%
  \rotatebox{-30}{\rule[.05ex]{.4pt}{.77em}}}
\newcommand{\isoto}[1][]{\mathop{\xrightarrow%
[{\raisebox{.3ex}[0ex][.3ex]{$\scriptstyle{#1}$}}]%
{{\raisebox{-.6ex}[0ex][-.6ex]{$\mspace{2mu}\sim\mspace{2mu}$}}}}}
\newcommand{\wl}{\sfP}
\newcommand{\rl}{\sfQ}
\newcommand{\nrl}{\sfQ^-}
\newcommand{\weyl}{\sfW}
\newcommand{\lan}{\langle}
\newcommand{\ran}{\rangle}
\newcommand{\Aqn}{A_q(\n)}
\newcommand{\Aan}{A_{\bbA}(\n)}
\newcommand{\qt}[1]{\quad\text{#1}}
\newcommand{\qtq}[1][{and}]{\quad\text{{#1}}\quad}
\newcommand{\ee}{\end{enumerate}}
\newcommand{\bitem}{\begin{itemize}}
\newcommand{\eitem}{\end{itemize}}
\newcommand{\ben}{\begin{enumerate}[{\rm (1)}]}
\newcommand{\bnum}{\begin{enumerate}[{\rm (i)}]}
\newcommand{\bnump}{\begin{enumerate}[{\rm (i)$'$}]}
\newcommand{\bna}{\begin{enumerate}[{\rm (a)}]}
\newcommand{\bnA}{\begin{enumerate}[{\rm (A)}]}
\newcommand{\bc}{\begin{cases}}
\newcommand{\ec}{\end{cases}}
{\relax\setlength{\extrarowheight}{10pt}\begin{array}}%
\newcommand{\ba}{\begin{array}}
\newcommand{\ea}{\end{array}}
\newcommand{\snoi}{\smallskip \noindent}
\newcommand{\mnoi}{\medskip \noindent}
\nc{\bnoi}{\bigskip\noindent}
\nc{\eqs}[1]{\underset{\raisebox{.4ex}[.7ex][0ex]{$\scriptstyle{#1}$}}{=}}
\nc{\ang}[1]{\langle{#1}\rangle}
\nc{\Uqmz}{U_{\bbA}^-(\g)}
\nc{\hA}{\hcalA}
\nc{\cl}{\colon}
\nc{\Proof}{\begin{proof}}
\nc{\QED}{\end{proof}}
\nc{\one}{\mathsf{1}}
\nc{\stt}[1]{\{#1\}}
\nc{\set}[2]{\left\{#1\bigm|#2\right\}}
\nc{\hAz}{\hcalA_{\bbA}}
\nc{\hAzq}{\hcalA_{\Z[q]}}
\nc{\hAzp}[1][{\ge0}]{(\hcalA_{\bbA})_{#1}}
\nc{\Cor}{\begin{corollary}}
  \nc{\encor}{\end{corollary}}
\nc{\id}{\mathrm{id}}
\nc{\rmo}{{\rm(}}
\nc{\rmf}{{\rm)}}
\newenvironment{myequation}
{\relax\setlength{\arraycolsep}{1pt}\begin{eqnarray}}
{\end{eqnarray}}
\newenvironment{myequationn}
{\relax\setlength{\arraycolsep}{1pt}\begin{eqnarray*}}
{\end{eqnarray*}}
\newcommand{\eq}{\begin{myequation}}
\newcommand{\eneq}{\end{myequation}}
\newcommand{\eqn}{\begin{myequationn}}
\newcommand{\eneqn}{\end{myequationn}}
\newcommand{\Uqgm}{U_q^-(\g)}
\newcommand{\hBi}{\hB(\infty)}
\newcommand{\trmG}{\widetilde{\rmG}}
\newcommand{\Uqhm}{U_{\bfk}^-(\g)}
\newcommand{\Uqmh}{U_{\bfk}^-(\g)}
\newcommand{\es}{e^*}
\newcommand{\Es}{\rmE^\star}
\newcommand{\llf}{(\hspace{-0.5ex}(}
\newcommand{\LLf}{\left(\hspace{-1ex}\left(}
\newcommand{\rrf}{)\hspace{-0.5ex})}
\newcommand{\RRf}{\right)\hspace{-1ex}\right)}
\newcommand{\Ang}[1]{  \lan #1 \ran  }
\newcommand{\qcomb}[3]{\left[ \begin{matrix} #1 \\ #2 \end{matrix} \right]_{#3}}
\newcommand{\bnom}[1]{\begin{bmatrix}#1\end{bmatrix}}
\newcommand{\dVert}[1]{  \Vert #1 \Vert  }
\newcommand{\pair}[1]{  \llf #1 \rrf  }
\newcommand{\Bpair}[1]{  \LLf #1 \RRf  }
\newcommand{\Cg}{\scrC_\g}
\newcommand{\oprod}{\displaystyle\prod^{\xrightarrow{}}}
\newcommand{\rprod}{\prod^{\xleftarrow{}}}
\newcommand{\akete}[1][0ex]{\rule[{#1}]{0ex}{1ex}}
\newcommand{\TT}{\textbf{\textit{T}}}%\ms{0mu}}
\newcommand{\Seq}{\mathrm{Seq}}
\newcommand{\Aqhn}{A_{\bfk}(\n)}
\newcommand{\hAform}[1]{\bl #1\br_{\hcalA}\ms{1mu}}
\newcommand{\kform}[1]{  ( #1 )}
\newcommand{\aform}[1]{  ({ #1 })_\n  }
\newcommand{\Azn}{A_\bbA(\n)}
\newcommand{\Aznw}{A_\bbA(\n(w))}
\newcommand{\Qh}{{\bfk}}
\newcommand{\Qq}{\Q(q)}
\newcommand{\TTi}[1]{\TT_{#1}}
\newcommand{\TTiv}[1]{\TT^{\ms{3mu}\star}_{#1}}
\newcommand{\ttPi}[1]{\ttP^\ii_{#1}}
 \newcommand{\bl}{\bigl(}
\newcommand{\br}{\bigr)}
\newcommand{\Uzgm}{U_\bbA^-(\g)}
\newcommand{\Gup}{\rmG^{{\rm up}}}
\newcommand{\bGup}{\bfG^{{\rm up}}}
\newcommand{\Lup}{\rmL^{{\rm up}}}
\newcommand{\LupA}{\Lup\left(\Azn\right)}
\newcommand{\LuphA}{\hAzq}
\newcommand{\qmodLuphA}{\mod q\LuphA}
\newcommand{\Mn}{\mathbf{M}\ms{1mu}}
\newcommand{\longepito}[1][]{\xymatrix@C=4ex{{}\ar@{->>}[r]^{#1}&{}}}
 \nc{\cble}{unmixed\xspace}
 \nc{\hL}{\LuphA}
 \nc{\cb}{\mathfrak{c}}
 \nc{\vphi}{\varphi}
 \nc{\Lemma}{\begin{lemma}}
   \nc{\enlemma}{\end{lemma}}
 \nc{\Bi}{B(\infty)}
 \nc{\hAl}{\hAzq}
 \nc{\hAq}{\hA_{\Q(q)}}
 \nc{\hAB}{\hA_B}
 \nc{\hABp}[1][{\ge m}]{(\hA_B)_{#1}}
\nc{\diPP}[3]{ \ttP^{#1, \{ #2\} }_{#3} }
\nc{\scbul}{{\scriptstyle\bullet}}
\nc{\op}{{\mathrm{op}}}
\nc{\Qqh}{\Q(q^{1/2})}
\nc{\db}{\ocalD}
\newcommand{\akew}[1][2ex]{\rule[-1ex]{#1}{0ex}}
\renewcommand{\mod}{\akew[1.2ex]\mathrm{mod}\akew[.9ex]}
\nc{\vs}{\vspace*}
\nc{\Rem}{\begin{remark}}
\nc{\enrem}{\end{remark}}
\title[Braid symmetries on bosonic extensions]{Braid symmetries on bosonic extensions}
\author[M. Kashiwara]{Masaki Kashiwara}
\thanks{The research of M.\ Kashiwara
	was supported by Grant-in-Aid for Scientific Research (B)  23K20206,  
	Japan Society for the Promotion of Science.}
\address[M. Kashiwara]{%
Kyoto University Institute for Advanced Study, Research Institute
for Mathematical Sciences, Kyoto University, Kyoto 606-8502, Japan
\& Korea Institute for Advanced Study, Seoul 02455, Korea }
\email[M. Kashiwara]{masaki@kurims.kyoto-u.ac.jp}
\author[M. Kim]{Myungho Kim}
\address[M. Kim]{Department of Mathematics, Kyung Hee University, Seoul 02447, Korea}
\email[M. Kim]{mkim@khu.ac.kr}
\thanks{The research of M.\ Kim was supported by the National Research Foundation of
Korea (NRF) Grant funded by the Korea government(MSIT)
(NRF-2022R1F1A1076214 and NRF-2020R1A5A1016126).}
\author[S.-j. Oh]{Se-jin Oh}
\thanks{ The research of S.-j.\ Oh was supported by the National Research Foundation of
	Korea (NRF) Grant funded by the Korea government(MSIT) (NRF-2022R1A2C1004045).}
\address[S.-j. Oh]{ Department of Mathematics, Sungkyunkwan University, Suwon, South Korea}
\email[S.-j. Oh]{sejin092@gmail.com}
\author[E. Park]{Euiyong Park}
\thanks{The research of E.\ Park was supported by the National Research Foundation of Korea (NRF) Grant funded by the Korea Government(MSIT)(RS-2023-00273425 and NRF-2020R1A5A1016126).}
\address[E. Park]{Department of Mathematics, University of Seoul, Seoul 02504, Korea}
\email[E. Park]{epark@uos.ac.kr}
\date{August 14, 2024}
\begin{document}

\begin{abstract}
In this paper, we introduce a family  $\{\TT_i\}_{i \in I}$ of automorphisms on the bosonic extension $\hcalA$ of arbitrary type $\g$ and show that they satisfy the braid relations.
We call them the braid symmetries on $\hcalA$. They preserve the global basis and the crystal basis of $\hcalA$. 
Using $\TT_i$'s repeatedly, we define subalgebra $\hcalA(\ttb)$ for each positive braid word $\ttb$,  which possesses PBW type basis.  
The subalgebra $\hcalA(\ttb)$ is a generalization of  the quantum coordinate ring $A_q(\n(w))$ associated with a Weyl group element $w$.
As applications, we show that the tensor product decomposition with respect to $\ttb$ of the non-negative part  $\hcalA_{\ge 0}$,  and establish an anti-isomorphism, called the twist isomorphism,  between $\hcalA(\ttb)$
and $\hcalA(\ttb^\rev)$ preserving their PBW-bases and global bases.  
\end{abstract}

\setcounter{tocdepth}{2}

\maketitle
\tableofcontents

\section*{Introduction}

Let $\g$ be a symmetrizable Kac-Moody algebra and let $U_q(\g)$ be the quantum group associated with $\g$ . We denote by $\set{\al_i}{i\in I}$ the set of simple roots of $\g$.
There is a family of automorphisms $\set{\sfS_i}{i \in I}$ on the quantum group $U_q(\g)$ that satisfy the braid relations (\cite{LusztigBook}, see also \cite{Saito94}).
We refer to them as \emph{Lusztig's braid symmetries} on $U_q(\g)$ or simply \emph{braid symmetries}.
 For the precise definition of $\sfS_i$, see \eqref{eq: Lusztig action}.
These automorphisms have several favorable properties, including  a certain kind of compatibility with the global basis (or canonical basis) of the negative half $\Uqgm$ (\cite{Lusztig96}, \cite{Kimura16}) and with the crystal basis $B(\infty)$ of $\Uqgm$ (\cite{Saito94}).
In particular, using the braid symmetries, one can construct the \emph{quantum unipotent coordinate ring $A_q(\n(w))$} associated with a Weyl group element $w$,
along with its dual PBW basis (for a comprehensive exposition, see, for example, \cite[Section 4]{Kimura12}).

The purpose of this paper is to introduce and study a family $\set{\TT_i}{i\in I}$  of algebra automorphisms on the \emph{bosonic extension} $\hcalA$ of \emph{arbitrary type} $\g$  that satisfies the braid relations.
We call the automorphisms $\TT_i$ the \emph{braid symmetries} on the bosonic extension $\hcalA$.
Note that when $\g$ is of finite type,  these automorphisms were introduced in \cite{KKOP21B, JLO2} and studied extensively in \cite{OP24}. 

\medskip
Let us briefly recall the definition of the bosonic extension $\hcalA$ associated with $\g$. The quantum coordinate ring $\Aqn$, which is isomorphic to the negative half $\Uqgm$ of $U_q(\g)$, admits a presentation by 
the Chevalley generators $\set{f_i}{i\in I}$ subject to the  \emph{$q$-Serre relations}. 
In \cite{KKOP24}, the \emph{bosonic extension $\hcalA$ associated with $\g$}   is
presented  by a set of  generators $\set{f_{i,m}}{i\in I, m \in \Z}$
subject to the $q$-Serre relations among  $\set{f_{i,m}}{i\in I}$ for a fixed $m$,
 and the \emph{$q$-boson relations} between $f_{i,m}$ and $f_{j,p}$ for different $m$ and $p$ (See \eqref{it: def of hA (a)} and \eqref{it: def of hA (b)}). 
It is shown that the subalgebra $\hcalA[m]$ generated by $\{f_{i,m} \mid i\in I\}$  is isomorphic to $\Aqn$  for each $m$, 
and the algebra $\hcalA$ is  isomorphic as a  vector space  to the tensor product of infinitely many  copies of $\Uqgm$. 
The subalgebra $\hcalA[m,m+1]$ generated by $\{f_{i,m}, f_{i, m+1} \mid i\in I\}$ is isomorphic to the \emph{$q$-deformed boson algebra} introduced in \cite{K91}, hence we call $\hcalA$ the bosonic extension.

The study of the bosonic extension $\hcalA$ was initiated by Hernandez and Leclerc  for simply-laced finite type $\g$, in connection with finite-dimensional representation theory over the untwisted quantum affine algebra $U_q'(\hg^{(1)})$ (\cite{HL15}). 
They showed that, when $\g$ is of simply-laced finite type, the algebra $\hcalA$ is isomorphic to \emph{the quantum Grothendieck ring} $\calK_t(\Cg^0)$  of the \emph{Hernandez-Leclerc category} $\Cg^0$, which is a certain skeleton category of finite-dimensional representations of the quantum affine algebra $U_q'(\hg^{(1)})$ (For quantum Grothendieck rings, see for example, \cite{Nak04,VV02,Her04}).
This result was extended to all finite types $\g$,  not necessarily symmetric in \cite{JLO1} where it was shown that the algebra $\hcalA$ of finite type $\g$  is isomorphic to the \emph{quantum virtual Grothendieck ring} introduced in \cite{KO23}. 

In \cite{KKOP24}, the algebra $\hcalA$ is defined for arbitrary symmetrizable Kac-Moody algebra $\g$, and it is shown that  the algebra $\hcalA$ possesses a distinguished basis, called the \emph{global basis}, which is analogous to the upper global basis (or dual canonical basis) of the quantum unipotent coordinate ring $\Aqn$.
The global basis is parameterized by the extended crystal $\hB(\infty)$, which is a product of infinitely many copies of the crystal $B(\infty)$ for $\Uqgm$. It turns out that if $\g$ is of simply-laced finite type, then the normalized global basis corresponds to the \emph{$(q,t)$-characters of simple modules}, a distinguished basis of
the quantum Grothendieck ring $\calK_t(\Cg^0)$ (\cite{KKOP24}).

\medskip

To gain a better understanding of $\TT_i$, it is helpful to first explore its relationship with the braid symmetries $\sfS_i$ of $U_q(\g)$. 
We will briefly explain how $\TT_i$ and $\sfS_i$ are related.
Observe that the formula for the braid symmetry $\TT_i$ resembles that of $\sfS_i$ (see \eqref{eq: Braid group actions} and \eqref{eq: Lusztig action}).  It can be understood as an extension of the automorphisms $\sfS_i$ on $U_q(\g)$ to the bosonic extension $\hcalA$ in the following sense.
First, note that the bosonic extension $\hcalA$ contains a natural subalgebra $\hcalA[m]$ for each $m$  that is isomorphic to $\Uqgm$, but not to the whole of $U_q(\g)$. Thus, $\sfS_i$ does not restrict to $\hcalA$.
Nevertheless there are subalgebras $\Uqgm[i] $ and $\Uqgm[i]^*$ of $U_q^-(\g)$ such that $\sfS_i$ induces an isomorphism between them. 
The automorphism $\TT_i$ coincides with $\sfS_i$ on the subalgebra of $\hcalA[m]$ corresponding to $\Uqgm[i]$ (see \eqref{eq: T_i S_i}).
Note that the Chevalley generator $f_i$ is outside of $\Uqgm[i]$.
Whereas the automorphism $\sfS_i$ maps the generator $f_i$ to $-e_it_i$, an element outside  of $\Uqgm$, 
the automorphism $\TT_i$ maps the generator $f_{i,m}$ to $f_{i,m+1}$ in the subalgebra $\hcalA[m+1]$, which is the \emph{next} copy of $\Uqgm$.
In other words, the fact that $\sfS_i$ is not an automorphism on $\Aqn$ is resolved by the braid symmetries $\TT_i$ on $\hcalA$.
Thus, $\TT_i$ can be viewed as a natural extension of $\sfS_i$ reflecting the transition from $\Aqn$ to $\hcalA$.
\medskip
%(Results of this paper)

The main results of this paper can be summarized as follows.
\begin{enumerate}
\item  The map $\TT_i$ given by the formula \eqref{eq: Braid group actions} is a well-defined algebra automorphism on the bosonic extension $\hcalA$ of arbitrary type $\g$,
and  the family $\set{\TT_i}{i\in  I}$ satisfies the braid relations associated with $\g$ (Theorem \ref{thm: T-braid}).
Consequently, for each positive braid word $\ttb$, there is an automorphism $\TT_\ttb$ on $\hcalA$.

\item The braid symmetries $\set{\TT_i}{i\in  I}$ preserve the global basis of $\hcalA$ and induce a braid group action on the extended crystal $\hB(\infty)$.

\item  For each positive braid word $\ttb$, the subalgebra $\hcalA(\ttb)\seteq\hcalA_{\ge 0} \cap \TT_\ttb(\hcalA_{<0})$ is generated by the \emph{cuspidal elements}, and the ordered products of these cuspidal elements form a basis (the PBW basis)  of $\hcalA(\ttb)$ as a vector space. 
 Here $\hcalA_{\ge 0}$  and $\hcalA_{<0}$ denote the subalgebras of $\hcalA$ generated by $\set{f_{i,m}}{i\in I, \ m\ge 0 }$ and by $\set{f_{i,m}}{i\in I, \ m< 0 }$, respectively.  
The transition matrix between the global basis and the PBW basis of $\hcalA(\ttb)$ is unitriangular.
The subalgebra $\hcalA(\ttb)$  is  analogous to the subalgebra $A_q(\n(w))$  of $\Aqn$ associated with a Weyl group element $w$.  
\item We give two applications of the braid symmetries :
\bnum
\item The multiplication gives a linear isomorphism between the subalgebra  $\hcalA_{\ge 0}$ 
  and the tensor product $\hcalA(\star, \ttb) \tens \hcalA(\ttb)$, where $\hcalA(\star, \ttb)= \TT_\ttb(\hcalA_{\ge 0})$. 
This result is analogous to the one established for $\Uqgm$  in \cite[Theorem 1.1]{Kimura16} and \cite[Proposition 2.10]{Tani17}.

\item There is an algebra anti-isomorphism $\Theta_\ttb$  between $\hcalA(\ttb)$
and $\hcalA(\ttb^\rev)$ preserving their PBW bases and global bases, where $\ttb^\rev$ denotes the reversed word of $\ttb$.
It is analogous to the \emph{quantum twist map} between $A_q(\n(w))$ and $A_q(\n(w^{-1}))$, which was  introduced in \cite{LY19} and studied in \cite{KimuOya18}.
\ee
\end{enumerate}
\medskip

Let us provide  more detailed explanation of the results.
Since the bosonic extension $\hcalA$ is defined by the presentations (see \eqref{it: def of hA (a)} and \eqref{it: def of hA (b)}), it is in principle possible to verify whether the formula \eqref{eq: Braid group actions} is compatible with these relations. However, directly verifying the $q$-Serre relations \eqref{it: def of hA (a)} can be challenging in practice. Instead, we have adopted an approach that uses the fact that the braid symmetries $\sfS_i$ are algebra homomorphisms on $U_q(\g)$, ensuring their compatibility with the $q$-Serre relations, and extends this compatibility to the braid symmetries $\TT_i$ on $\hcalA$.
In a similar way one can show the braid relations of $\set{\TT_i}{i\in I}$. 
Recall that for finite type $\g$, the well-definedness and the braid relations were verified using the isomorphism between $\hcalA$ and the quantum (virtual) Grothendieck ring, along with certain automorphisms on this ring 
(see the second-to-last paragraph of \cite[Introduction]{JLO2}). However, this approach is not applicable for general  $\g$, as such an isomorphism and associated notions are not available in the general case.

Recall that the braid symmetry $\sfS_i$ is an isomorphism between the subalgebras $\Uqgm[i]$ and $\Uqgm[i]^*$. 
 These subalgebras of $\Uqgm$ are known to be compatible with the upper global basis, and $\sfS_i$ induces a bijection between the upper global basis of  these subalgebras (\cite{Lusztig96}, \cite{Kimura16}) and a bijection between the corresponding crystal bases (\cite{Saito94}).
In Theorem \ref{thm: braid crystal}, we show that the braid symmetry $\TT_i$  induces an automorphism of the global basis of $\hcalA$ and an automorphism of the extended crystal $\hB(\infty)$.  
Hence one may regard $\TT_i$ on $\hcalA$ as an enhancement of  $\sfS_i$ on $\Uqgm$, as $\TT_i$ provides an automorphism (and thus a braid group action) on both the global basis and crystal basis, whereas $\sfS_i$ is only a bijection between subsets of global basis and crystal basis.
Note that the operators $\sfR_i$ on $\hB(\infty)$, which corresponds to $\TT_i$ on the global basis, can be described as a component-wise application of  the operator $\scrS_i$ on $B(\infty)$ corresponding to $\sfS_i$ (known as Saito reflection) composed of some powers of usual (star-) Kashiwara operators $\te_i$ and  $\tf_i^*$.   This description appeared first in \cite{Park23}, where the braid relations for $\set{\sfR_i}{i\in I}$ were established for  finite type $\g$.

Let $\ttB =\ang{r_{i}^{\pm} \mid i\in I}$ be the braid group  associated with $\g$. 
For each positive braid word $\ttb=r_{i_1}r_{i_2} \cdots r_{i_r}$ in $\ttB_\g$, we can consider the elements $\TT_{i_1} \cdots \TT_{i_{k-1}} (q_i^{1/2} f_{i_k,0})$ for each $1\le k\le r$, which are called the \emph{cuspidal elements}.
By taking the  products of these elements in decreasing order and normalizing them to be invariant under the twisted anti-involution $c$ on $\hcalA$ (see  \eqref{eq: c and sigma} for the definition of $c$), we obtain a set  $\bfP_\ii \seteq \{ \ttP^\ii(\bsu) \ |  \ \bsu \in \Z_{\ge0}^r \} $ which depends on the choice of $\ii=(i_1,\ldots,i_r)$ for $\ttb$. We then show that $\bfP_\ii$ forms a basis (called the PBW basis) of the subalgebra  $\hcalA(\ttb)\seteq\hcalA_{\ge 0} \cap \TT_\ttb(\hcalA_{<0})$ of $\hcalA$ (Corollary \ref{cor: P_i k-basis}). 
The transition matrix between the global basis and the PBW basis of $\hcalA(\ttb)$ is unitriangular with respect to a bi-lexicographic order  on $ \Z_{\ge0}^r $,  and the non-trivial entries of this matrix belong to $q\Z[q]$.
We also show that when $\g$ is of simply-laced finite type, the matrix entries belong to $q\Z_{\ge0}[q]$ 
using the result of \cite{VV03} on the positivity of the structure coefficients of $(q,t)$-characters of simple modules. 
For a Weyl group element $w=s_{i_1} \cdots s_{i_r}$ let $\ttb = r_{i_1} \cdots r_{i_r}$ be the lift of $w$ in the braid monoid. In this case, the corresponding algebra $\hcalA(\ttb)$ is isomorphic to the quantum unipotent coordinate ring $A_q(\n(w))$ associated with $w$ which is a subalgebra of $\Aqn$. Thus the subalgebra $\hcalA(\ttb)$ and its PBW basis are a natural generalization of $A_q(\n(w))$ and its dual PBW basis, reflecting the transition from $\Aqn$ to $\hcalA$.
Note that the subalgebra $\hcalA(\ttb)$ was introduced and studied in \cite{OP24} for finite type $\g$.

Recall that the multiplication in $U_q(\g)$ provides an isomorphism of vector spaces 
$$\left( \Uqgm \cap \sfS_w (U_q^{\ge 0} ) \right) \tens \left( \Uqgm \cap \sfS_w (\Uqgm) \right) \ \isoto   \Uqgm,$$
as established in \cite{Kimura16, Tani17}.
A bosonic analogue of this result is as follows (Proposition \ref{prop: iso via multiplication}): the multiplication gives the following isomorphism of vector spaces
$$\hcalA(\ttb) \tens \hcalA(\star,\ttb) =\bl\hcalA_{\ge0} \cap \TT_\ttb(\hcalA_{<0})\br\tens \TT_\ttb(\hcalA_{\ge 0}) \isoto   \hcalA_{\ge0}.$$
Note that this result also holds for the $\Z[q^{\pm1}]$-lattices  of the subalgebras.
The second application is the bosonic analogue of the \emph{quantum twist map}, which is an algebra anti-isomorphism  $\Theta_w\cl A_q(\n(w^{-1})) \isoto A_q(\n(w))$ introduced  in \cite{LY19}.  It is shown in \cite{KimuOya18} that the quantum twist map $\Theta_w$ induces a bijection between the upper global bases. 
As suggested by the definition of  $\Theta_w$, which is given as  a composition of $\sfS_w$ with an anti-automorphism and an involution on $U_q(\g)$, we  define the quantum twist map $\Theta_\ttb$ for a positive braid $\ttb$ by the composition $\Theta_\ttb \seteq \TT_\ttb \circ \star \circ \ocalD$. 
Here  $\star$ denotes the anti-involution that sends $f_{i,m}$ to $f_{i,-m}$ and $\ocalD$ is the algebra automorphism that sends $f_{i,m}$ to $f_{i,m+1}$.
Then the quantum twist map induces an algebra anti-isomorphism from $\hcalA(\ttb^\rev)$ to $\hcalA(\ttb)$ which preserves the PBW basis and global basis (Corollary \ref{cor:Theta}, Proposition \ref{prop: twist map}).

\medskip
Finally, we remark that the braid symmetries provide a method for constructing various subalgebras of $\hcalA$.
When $\g$ is of simply-laced finite type,  it is known that the bosonic extension $\hcalA$ and several of its subalgebras possess  cluster algebra structures (\cite{HL10, HL15, Qin17, KKOP24A}).  
Motivated by this observation, we expect that many subalgebras of $\hcalA$ also have cluster algebra structures. More precisely, we propose the following : 
\begin{conjecture} 
Let $m_j\in \Z$, $\ttb_j\in\ttB$ {\rm(} $1\le j\le r$ {\rm )}
and $m'_k\in\Z$, $\ttb'_k\in\ttB$  {\rm(}  $1\le k\le r'$ {\rm )}.
Then 
$$\bigcap_{1\le j\le r}\TT_{\ttb_j}\bl\hA_{\ge m_j}\br \cap\bigcap_{1\le k\le r'}\TT_{\ttb_k}\bl\hA_{\,\le m'_k}\br$$
 has a quantum
 cluster algebra structure.
\end{conjecture}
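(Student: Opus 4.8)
The plan is to prove the conjecture by reducing an arbitrary such intersection, via the braid symmetries $\TT_i$ and the twist isomorphism $\Theta_\ttb$, to a \emph{normal form} built from the blocks $\hcalA(\ttb)$ and $\hcalA(\star,\ttb)$ analysed in this paper, and then to produce an explicit quantum seed for that normal form. First, since each $\TT_\ttb$ is an algebra automorphism of $\hcalA$ (Theorem~\ref{thm: T-braid}) and $\ocalD$ is the automorphism shifting the grading index, applying $\TT_{\ttb_1}^{-1}$ and a power of $\ocalD$ to the whole intersection replaces it by an isomorphic algebra of the same shape in which $\ttb_1=e$ and $m_1=0$, so that one factor is literally $\hcalA_{\ge0}$. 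Next, using the cuspidal decomposition, the PBW/global triangularity of Corollary~\ref{cor: P_i k-basis}, and the braid action on $\hB(\infty)$ from Theorem~\ref{thm: braid crystal}, I would show that inside $\hcalA_{\ge0}$ each remaining factor $\TT_{\ttb_j}(\hA_{\ge m_j})\cap\hcalA_{\ge0}$ and $\TT_{\ttb'_k}(\hA_{\le m'_k})\cap\hcalA_{\ge0}$ is again spanned by a subset of the global basis closed under the relevant ordered products; after concatenating positive braid words one expects the intersection to become a finite iteration of the two operations ``cut by $\hcalA(\ttb)$'' and ``cut by $\TT_\ttc(\hcalA_{\ge0})=\hcalA(\star,\ttc)$'', with the multiplication isomorphism $\hcalA(\ttb)\otimes\hcalA(\star,\ttb)\isoto\hcalA_{\ge0}$ of Proposition~\ref{prop: iso via multiplication} and the anti-isomorphism $\Theta_\ttb$ of Corollary~\ref{cor:Theta} being the tools that let one pass between ``$\ge$'' and ``$\le$'' factors and split off a part that already carries a seed.

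For the normal-form subalgebra I would build the initial quantum seed by \emph{amalgamation} (in the sense of Fock--Goncharov) of the known seeds of the blocks: when $\ttb$ lifts a Weyl group element $w$ one has $\hcalA(\ttb)\cong A_q(\n(w))$ with its Geiss--Leclerc--Schr\"oer / Goodearl--Yakimov seed, and the blocks are glued along the overlaps prescribed by the multiplication isomorphism, freezing the variables in each overlap. The cluster variables should be the normalized cuspidal (hence global-basis) elements attached to the appropriate ``chamber weights'', and the quantum exchange matrix is read off from the $q$-commutation pairing on $\hcalA$, which the braid symmetries respect. The delicate point is the exchange relations themselves: the only uniform tool is monoidal categorification, realizing $\hcalA$ as (a subring of) the Grothendieck ring of a monoidal category $\calC$ in which $\TT_i$ is categorified by a reflection functor, so that each mutation is a short exact sequence coming from an R-matrix and the initial relations follow from a ``first-mutation'' argument in the style of Kang--Kashiwara--Kim--Oh and Qin.

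The main obstacle is that this categorification is available only in restricted generality. For simply-laced finite $\g$ all ingredients exist ($\calC$ is the Hernandez--Leclerc category, equivalently a category of quiver-Hecke-algebra modules, and cluster structures on $\hcalA$ and on its $A_q(\n(w))$-type subalgebras are known), so I would settle that case first. For non-simply-laced finite type one must transport the seed through the quantum \emph{virtual} Grothendieck ring of \cite{KO23} via folding. For affine or indefinite $\g$ there is at present no monoidal category with Grothendieck ring $\hcalA$ and no reflection functors; there one would instead try to exhibit $\hcalA$ as an increasing union of finite-type subalgebras $\hcalA(\ttb)$ and realize the intersection as a colimit of quantum cluster algebras. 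Making this precise (that the colimit is a quantum cluster algebra, that the intersection is a ``full'' subobject, and that the amalgamated exchange matrix is well defined independently of the chosen reduced words) is where I expect the bulk of the work, and the genuinely open part, to lie.
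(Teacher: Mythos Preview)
The paper does not prove this statement: it is stated explicitly as a \emph{Conjecture} in the Introduction and left open.  The only supporting remark the authors make is that the intersection has a subset of the global basis as a basis.  So there is no proof in the paper for your proposal to be compared against.

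Your proposal is not a proof either, and you essentially concede this in the last paragraph.  A few concrete gaps beyond those you flag:
\begin{itemize}
\item The $\ttb_j,\ttb'_k$ lie in the full braid group $\ttB$, not in the positive monoid $\ttB^+$.  Your reduction step ``concatenating positive braid words'' and your appeal to the blocks $\hcalA(\ttb)$, $\hcalA(\star,\ttb)$ (defined only for $\ttb\in\ttB^+$) do not obviously cover braids with inverses; the paper never analyses $\TT_\ttb(\hA_{\ge m})$ for general $\ttb\in\ttB$.
\item Even in the positive-braid case, $\hcalA(\ttb)\cong A_q(\n(w))$ only when $\ttb$ is the lift of a \emph{reduced} word for $w$.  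For a general $\ttb\in\ttB^+$ no Gei\ss--Leclerc--Schr\"oer or Goodearl--Yakimov seed is available, so your ``amalgamation of known seeds of the blocks'' presupposes exactly what is missing.
\item The claim that each factor $\TT_{\ttb_j}(\hA_{\ge m_j})\cap\hA_{\ge0}$ is ``spanned by a subset of the global basis closed under the relevant ordered products'' is not established in the paper beyond the particular cases $\hcalA(\ttb)$ and $\hcalA(\star,\ttb)$; a general intersection of several such pieces has no PBW-type description here.
\end{itemize}
In short, what you have written is a reasonable research outline for attacking the conjecture in special cases (and you correctly identify monoidal categorification as the only currently viable route to exchange relations), but it is not a proof, and the paper itself makes no claim to have one.
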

Note that this algebra has a subset of the global basis as a basis.

\medskip
This paper is organized as follows.
Section 1 is devoted to providing the necessary backgrounds on quantum groups, quantum unipotent coordinate rings, braid symmetries on $\Uqgm$, and the crystal and upper global basis.
In Section 2, we review the bosonic extensions $\hcalA$ for arbitrary type $\g$ as well as its global basis and extended crystal basis.
In Section 3, we introduce the braid symmetries $\TT_i$ on $\hcalA$ and prove their properties, including the braid relations, the preservation of bilinear forms and lattices and
the preservation of  the global basis. 
In Section 4, we focus on the subalgebras $\hcalA(\ttb)$ and their PBW basis.
In Section 5, we present two applications:  the tensor product decomposition of $\hcalA_{\ge 0}$ with respect to $\ttb$ and the quantum twist map $\Theta_\ttb$ on $\hcalA$.

\medskip
\begin{convention}  Throughout this paper, we use the following convention.
\ben
\item For a statement $\ttP$, we set $\delta(\ttP)$ to be $1$ or $0$ depending on whether $\ttP$ is true or not. In particular, we set $\delta_{i,j}=\delta(i=j)$. 
\item For a totally ordered set $J = \{ \cdots < j_{-1} < j_0 < j_1 < j_2 < \cdots \}$, write
$$
\oprod_{j \in J} A_j \seteq \cdots A_{j_2}A_{j_1}A_{j_0}A_{j_{-1}}A_{j_{-2}} \cdots, \quad
\rprod_{j \in J} A_j \seteq \cdots A_{j_{-2}}A_{j_{-1}}A_{j_0}A_{j_{1}}A_{j_{2}} \cdots. 
$$
\item For $a\in \Z \cup \{ -\infty \} $ and $b\in \Z \cup \{ \infty \} $ with $a\le b$, we set 
\begin{align*}
& [a,b] =\{  k \in \Z \ | \ a \le k \le b\}, &&  [a,b) =\{  k \in \Z \ | \ a \le k < b\}, \allowdisplaybreaks\\
& (a,b] =\{  k \in \Z \ | \ a < k \le b\}, &&  (a,b) =\{  k \in \Z \ | \ a < k < b\},
\end{align*}
and call them \emph{intervals}. 
When $a> b$, we understand them as empty sets. For simplicity, when $a=b$, we write $[a]$ for $[a,b]$. For an interval $[a,b]$, we set
$A^{[a,b]}$ to be the product of copies of a set $A$ indexed by $[a,b]$, and 
$$
\Z_{\ge0}^{\oplus [a,b]} \seteq \{ (c_a,\ldots,c_b) \ | \ c_k \in \Z_{\ge 0} \text{ and  $c_k=0$ except for finitely many $k$'s} \}. 
$$
We define $A^{[a,b)}$,  $\Z_{\ge 0}^{\oplus [a,b)}$, ..., etc.\ in a similar way. 
\item   For a preorder $\preceq$ on a set $A$,  we write $x\prec y$ when $x\preceq y$ holds but $y\preceq x$ does not hold.
  \label{conv:preorder}
\ee
\end{convention}

\section{Preliminaries}

In this section, we briefly review the basic notions about the quantum groups and quantum unipotent coordinate rings.

\subsection{Quantum groups and Quantum unipotent coordinate rings}
\label{sec: Background} 
Let $I$ be an index set and let $q$ be an indeterminate with a formal square root $q^{1/2}$. A \emph{Cartan datum} $(\sfC,\wl,\Pi,\wl^\vee,\Pi^\vee)$ consists of
\bna
\item a symmetrizable Cartan matrix $\sfC=(c_{i,j})_{i,j \in I}$, i.e., $\sfD \sfC$ is symmetric for a diagonal matrix $\sfD = {\rm diag}(\sfd_i \in \Z_{>0} \mid i \in I)$,
\item a free abelian group $\wl$, called the \emph{weight lattice},
\item $\Pi = \{\al_i \mid i \in I \} \subset \wl$ , called the set of \emph{simple roots},
 \item $\wl^\vee \seteq \Hom(\wl,\Z)$, called the co-weight lattice, 
\item $\Pi^\vee = \{ h_i \mid i \in I \} \subset \wl^\vee$, called the set of \emph{simple coroots},
\item a $\Q$-valued symmetric bilinear form $( \cdot,\cdot)$  on $\wl$,
\ee
satisfying the standard properties  (e.g.\ see \cite{KKOP24}). 
In this paper, we take 
$\sfd_i \seteq(\al_i,\al_i)/2\in\Z_{>0}$  $(i\in I)$.
The free abelian group $\rl \seteq \bigoplus_{i \in I} \Z \al_i$ is called the \emph{root lattice} and we set $\rl^+ = \sum_{i \in I}\Z_{\ge 0} \al_i \subset \rl$ and $\rl^- = \sum_{i \in I}\Z_{\le 0} \al_i \subset \rl$.
For any $\be = \sum_{i\in I} a_i\al_i \in \rl$,
we set 
\begin{align} \label{eq: het dVert}
\Vert  \beta  \Vert =  \sum_{i\in I} |a_i|\al_i \in \rl^+.    
\end{align}  
Let $\g$ be the Kac-Moody algebra associated with a Cartan datum $(\sfC,\wl,\Pi,\wl^\vee,\Pi^\vee)$, and $\weyl$
the \emph{Weyl group} of $\g$. The Weyl group $\weyl$ is generated by the simple reflections $s_i\in {\rm Aut}(\wl)$ $(i \in I)$ defined by $s_i(\la) = \la - \lan h_i,\la \ran \al_i$ for
$\la \in \wl$. For a sequence $\ii=(i_1,\ldots,i_r) \in I^r$, we call it  a \emph{reduced sequence} of $w \in \weyl$ if $s_{i_1} \ldots s_{i_r}$ is a reduced expression of $w$.

\smallskip

We denote by $U_q(\g)$ the \emph{quantum group} over $\Q(q)$
associated with the Cartan datum $(\sfC,\wl,\Pi,\wl^\vee,\Pi^\vee)$,  which is 
generated by the Chevalley generators $e_i,f_i$ $(i\in I)$ and $q^{h}$ $(h \in \wl^\vee)$ subject to the following defining relations:
\begin{subequations}  \label{eq: quantum group rel}
\begin{gather}
q^0=1, \quad q^{h}q^{h'}=q^{h+h'}, \quad q^he_i q^{-h} = q^{\Ang{h,\upal_i}}e_i, \quad
q^hf_i q^{-h} = q^{-\Ang{h,\upal_i}}f_i,  \label{eq: quantum group rel 1} \\
e_i f_j = f_je_i +\dfrac{t_i-t_i^{-1}}{q_i-q_i^{-1}}, \qt{where
  $t_i \seteq q^{\frac{(\al_i,\al_i)}{2}h_i}$,}    \label{eq: quantum group rel 2}
  \\
 \sum_{r=0}^{b_{i,j}} (-1)^r \qcomb{b_{i,j}}{r}{i}e_i^{b_{i,j}-r}
e_je_i^r =
\sum_{r=0}^{b_{i,j}} (-1)^r \qcomb{b_{i,j}}{r}{i}f_i^{b_{i,j}-r}
f_jf_i^r =0 \  \text{ for } i\ne j.   \label{eq: quantum group rel 3}
\end{gather}    
\end{subequations}
Here $b_{i,j} \seteq 1 - c_{i,j}$, and
$$
q_i \seteq q^{\sfd_i}, \quad [k]_i = \dfrac{q_i^k-q_i^{-k}}{q_i - q_i^{-1}}, 
\quad [k]_i! = \prod_{s=1}^k [s]_i \qtq \qcomb{m}{n}{i} = \dfrac{[m]_{i}!}{[n]_i![m-n]_!} 
$$
for $m \ge n \in \Z_{\ge0}$. For any $i \in I$  and $\al = \sum_{i\in I} n_i\al_i \in \rl$, we set
\begin{align} \label{eq: zeta}
\zeta_{i} = 1 -q_i^{2} \qtq \zeta^\al \seteq \prod_{i\in I} \zeta_{i}^{n_i} = \prod_{i\in I} (1-q_i^{2})^{n_i}.    
\end{align}

Note that there exists a $\Q(q)$-algebra anti-involution $* : U_q(\g) \to U_q(\g)$ defined by 
\begin{align} \label{eq: star involution}
e_i^* =e_i, \quad f_i^* =f_i \qtq (q^h)^* = q^{-h},    
\end{align}
where we write $x^*$ for the image of $x \in U_q(\g)$ under the involution $*$.

We denote by $U_q^-(\g)$ (resp. $U_q^+(\g)$) the subalgebra of $U_q(\g)$ generated by $f_i$'s (resp. $e_i$'s) and $U_{\bbA}^-(\g)$ the $\bbA$-subalgebra of
$U^-_q(\g)$ generated by $f_i^{(n)}\seteq f_i^n/[n]_i !$  
$(i \in I, \; n \in \Z_{>0})$. 

Note that $U^-_q(\g)$ admits the weight space decomposition 
$U^-_q(\g) = \soplus_{\be \in \nrl} U^-_q(\g)_\be$. For an element
$x \in U^-_q(\g)_\be$, we set $\wt(x) \seteq \be \in \rl^-$.  

Set
$$ A_q(\n) = \soplus_{\beta \in  \rl^-} A_q(\n)_\beta \quad \text{ where } A_q(\n)_\beta \seteq \Hom_{\Q(q)}(U^-_q(\g)_{\beta}, \Q(q)).$$
Let  
$$
\Ang{ \ , \ } : \Aqn \times \Uqgm \to \Q(q)
$$
be the pairing between $\Aqn$ and $\Uqgm$. 
Then  $A_q(\n)$ also has an algebra structure via the pairing 
$\Ang{ \ , \ }$ and the \emph{twisted coproduct} $\Delta_\n$ of $\Uqgm$ 
(see~\cite{LusztigBook}). We call $A_q(\n)$ the \emph{quantum unipotent coordinate ring}.

We denote by $\Aan$ the $\bbA$-submodule of
$A_q(\n)$ consisting of  $\uppsi \in A_q(\n)$ such that $\Ang{ \uppsi, U_{\bbA}^-(\g)} \subset \bbA$.
Note that $A_{\bbA}(\n)$ is a $\bbA$-subalgebra of $A_q(\n)$ . 

\smallskip 

For each $i \in I$, we denote by $\Ang{i} \in \Aqn_{-\al_i}$ the dual element of $f_i$ with respect to $\Ang{ \ , \ }$; i.e., 
\begin{align} \label{eq: angi}
\Ang{\Ang{i},f_j} =\delta_{i,j} \quad \text{ for any $i,j\in I$.}    
\end{align}

We set $\bfk \seteq \Q(q^{1/2})$ and 
$$
\Aqhn \seteq \bfk \tens_{\Q(q)} \Aqn. 
$$

\subsection{Bilinear forms} Let us first recall the bilinear form $( \ , \  )$ on $U^-_q(\g)$ introduced in~\cite{K91}. Note that it is a unique non-degenerate symmetric  bilinear form satisfying the following properties: 
\bna
\item  $\kform{xy,z}  = \kform{x\tens y,\Delta_\n(z)}$ for any $x,y,z \in \Uqgm$,
\item $\kform{1,1}  = \kform{f_i,f_i}  = 1$ for any $i \in I$, 
\item $\kform{x,y}  = 0$ if $x \in \Uqgm_\al$ and $y \in \Uqgm_\be$ such that $\al \ne \be$.
\ee

Note that there exists another non-degenerate, symmetric bilinear form $(\ , \ )_L$ on $\Uqgm$ introduced in \cite{LusztigBook}.
The bilinear forms $\kform{ \ , \ }$ and $(\ , \ )_L$ are related by
 (cf.\ \cite[Section 2.2]{L04}) 
\begin{align} \label{eq:KL}
\zeta^{\wt(x)}\kform{x,y} =  (x,y)_L, \qt{for homogeneous elements $x,y \in \Uqgm$.}
\end{align}

Let $e_i'$ (resp.\ $\es_i$) be the adjoint of the left (resp.\ right) multiplication of $f_i$ with respect to $\kform{ \ , \ }$; i.e.,
$$
\kform{ e_i'(x) , y } = \kform{ x,f_iy } \qtq \kform{ \es_i(x) , y } = \kform{ x,yf_i } \quad \text{ for any } x,y \in \Uqgm 
$$
(see \cite{K91,LusztigBook} and also~\cite[Section 2]{Kimura12}).  Also we can characterize the operators as follows:
$$
[e_i,x] = \dfrac{\es_i(x)t_i - t_i^{-1}e'_i(x)}{q_i-q_i^{-1}} \quad\text{for $x \in \Uqgm$.}
$$

For any $i,j \in I$ and $x,y \in \Uqgm$, we have
$$
e_i'\es_j =\es_j e'_i, \quad \es_i = * \circ e_i' \circ *, \quad \kform{x^*,y^*} = \kform{x,y}
$$
and 
\begin{subequations} \label{eq: e' es}
\begin{gather}
e'_i(xy)  = e'_i(x)y + q^{(\al_i,\wt(x))} x e'_i(y), \label{eq: e' 1}\\    
\es_i(xy) = x\es_i(y) + q^{(\al_i,\wt(y))} \es_i(x)y.   \label{eq: es 1} 
\end{gather}    
\end{subequations}

\smallskip

Note that there exists a $\Q(q)$-algebra isomorphism 
\begin{align} \label{eq: iota}
\iota :  \Uqgm \isoto \Aqn     
\end{align}
defined by $\iota(f_i) \seteq \zeta_i^{-1}\Ang{i}$ for any $i\in I$. Define a bilinear form $\aform{ \ , \ }$ on $\Aqn$ by 
\begin{align} \label{eq: aform}
\aform{f,g} \seteq \Ang{ f, \iota^{-1}(g)} \quad \text{ for any } f,g \in \Aqn.     
\end{align} 
Then one can see that $\aform{ \ , \ }$ is a non-degenerate
symmetric bilinear form on $\Aqn$ satisfying the following property:
\begin{align} \label{eq: zeta power} 
\Ang{\iota(u),v}=\aform{  \iota(u), \iota(v) }=\zeta^{\wt(u)} \kform{u,v}
\end{align}
for homogeneous elements $u,v \in \Uqgm$ (\cite[Lemma 3.1]{KKOP24}).

For any $i,j \in I$, we have
\begin{align} \label{eq: aform property}
\aform{\Ang{i},\Ang{j}} =\delta_{i,j} \zeta_i \qtq \aform{\Ang{ij},\Ang{ij}} = \dfrac{\zeta_i \zeta_j}{1-q^{-2(\al_i,\al_j)}} \ \ \text{ if } (\al_i,\al_j)<0,    
\end{align}
where $\Ang{ij} = \dfrac{\Ang{i} \Ang{j} - q^{-(\al_i, \al_j)   }\Ang{j} \Ang{i}   }{ 1-q^{-2(\al_i, \al_j)} }$ when $ (\al_i, \al_j)<0$. 
Hence $$\Ang{\Ang{ij},f_if_j}=1,\quad \Ang{\Ang{ij},f_jf_i}=0
\qtq \ang{i}\ang{j}=\ang{ij}+q^{-(\al_i,\al_j)}\ang{ji}.$$  
For $i \in I$ and $n \in \Z_{>0}$, we set
\begin{align} \label{eq: Ang i n}
\Ang{i^n} \seteq q_i^{n(n-1)/2} \Ang{i}^n.     
\end{align}

\subsection{Braid symmetries and dual PBW-bases} \label{subsec: B action}
We denote by $\ttB_\g = \Ang{r_i^{\pm 1}}_{i \in I}$ the braid group associated with $\g$; i.e, it is the group generated by $\{ r_i^{\pm 1} \}_{i \in I}$
subject to the following relations:
\begin{align} \label{eq: braid relation}
{\rm (i)} \ r_{i}r_{i}^{-1} = r_{i}^{-1}r_{i}=1 \qtq 
{\rm (ii)} \ \underbrace{r_{i}r_{j}\cdots}_{m_{i,j}\text{-times}} =\underbrace{r_{j}r_{i}\cdots}_{m_{i,j}\text{-times}}
\text{ for } i \ne j \in I,    
\end{align}
where 
\eq
m_{i,j} \seteq \bc
c_{i,j}c_{j,i} +2 & \text{ if } c_{i,j}c_{j,i} \le 2, \\
6  & \text{ if } c_{i,j}c_{j,i} =3, \\
\infty & \text{ otherwise.}
\ec
\label{eq: m_ij} 
\eneq
We call the relations in~\eqref{eq: braid relation}~{\rm (ii)} the \emph{braid relations} of $\ttB_\g$. 
We usually drop $_\g$ in the notation when we have no danger of confusion. We denote by $\ttB^+$ the braid monoid generated by $\{ r_i \}_{i \in I}$  and by 
$\ell(\ttb)$  the \emph{length} of $\ttb \in \ttB^+$.

\smallskip

Now we recall the braid symmetry on $\Uqgm$ and dual PBW basis theory by mainly following \cite{LusztigBook}. For $i \in I$, we set $\sfS_i \seteq T_{i,-1}'$
and $\sfS_i^{*} \seteq T_{i,1}''$, where $T_{i,-1}'$ and $T_{i,1}''$ are Lusztig's braid symmetries defined in \cite[Chapter 37]{LusztigBook}.
They are given as follows: 
\begin{subequations} \label{eq: Lusztig action}
  \begin{gather}
   \ba{ll}
\sfS_i(t_i)=t_i^{-1},&\quad  \sfS_i(t_j)=t_jt_i^{-\ang{h_i,\al_j}},\\
\sfS_i(f_i) \seteq -e_it_i,&\quad
\sfS_i(f_j) = \sum_{r+s = -\Ang{h_i,\al_j}} (-q_i)^s f_i^{(r)} f_j f_i^{(s)},\\
\sfS_i(e_i) \seteq -t_i^{-1}f_i,&\quad
\sfS_i(e_j) = \sum_{r+s = -\Ang{h_i,\al_j}} (-q_i)^{-r} e_i^{(r)} e_j e_i^{(s)},
\ea\\[1ex]
\ba{ll}
\sfS_i^*(t_i)=t_i^{-1},&\quad  \sfS_i^*(t_j)=t_jt_i^{-\ang{h_i,\al_j}},\\
\sfS_i^*(f_i) \seteq -t_i^{-1}e_i, &\quad \sfS_i^{*}(f_j) = \sum_{r+s = -\Ang{h_i,\al_j}} (-q_i)^r f_i^{(r)} f_j f_i^{(s)},\\
\sfS_i^*(e_i) \seteq -f_it_i,&\quad
\sfS_i^*(e_j) = \sum_{r+s = -\Ang{h_i,\al_j}} (-q_i)^{-s} e_i^{(r)} e_j e_i^{(s)},
\ea
\end{gather}
\end{subequations} 
and 
$\sfS_i^* \circ \sfS_i = \sfS_i \circ \sfS_i^* = {\rm id}$
(see also \cite{Saito94}).  
The automorphisms $\{ \sfS_i\}_{i\in I}$ satisfies the relations of $\ttB_\g$ and hence $\ttB_\g$ acts on $U_q(\g)$ via $\{ \sfS_i\}_{i \in I}$. 

Recall that $\sfS_i$ induces a $\Q(q)$-algebra isomorphism 
$\Uqgm[i] \isoto \Uqgm[i]^*$, 
where
\begin{equation} \label{eq: Uqgmi}
\begin{aligned}
\Uqgm[i] &\seteq \Uqgm \cap\sfS_i^* \Uqgm = \{ x \in \Uqgm \ | \  e_i'(x) = 0 \}, \\
\Uqgm[i]^* &\seteq \Uqgm \cap \sfS_i\Uqgm  = \{ x \in \Uqgm \ | \  e^*_i(x) = 0 \}.
\end{aligned}
\end{equation}
Set $\Uqmz[i]\seteq U_{\bbA}^-(\g)\cap \Uqgm[i]$
and $\Uqmz[i]^*\seteq U_{\bbA}^-(\g)\cap \Uqgm[i]^*$.
They are $\bbA$-subalgebra of
$U^-_q(\g)$.
Then $\sfS_i$ induces an isomorphism
\eq\sfS_i\cl\Uqmz[i]\isoto\Uqmz[i]^*.
\label{eq:Si}
\eneq
Since 
\eq
\Uqmz
=\sum_{k\in\Z_{\ge0}}f_i^{(k)} \Uqmz[i] 
=\sum_{k\in\Z_{\ge0}} \Uqmz[i]^* f_i^{(k)}\label{eq:Wxp}
\eneq 
(\cite[Proposition 3.2.1]{K91}), we have
\begin{equation}
\begin{aligned} \label{eq: Si in A}
\sfS_i(U^-_\bbA(\g)) &=\sum_{k\in\Z_{\ge0}}(e_it_i)^{(k)} \Uqmz[i]^* \qtq \\
\sfS^*_i(U^-_\bbA(\g)) & =\sum_{k\in\Z_{\ge0}}\Uqmz[i](t^{-1}_ie_i)^{(k)}.   
\end{aligned}
\end{equation}

Let us take an element $w$ in $\weyl$.
For a reduced sequence $\uw=(i_1,i_2,\ldots,i_r)$ of $w \in \weyl$ and $1 \le k \le r$,  we set $\be^{\uw}_k \seteq s_{i_1} \ldots s_{i_{k-1}}\al_{i_k}$, 
\begin{equation}
\begin{aligned} \label{eq: PBW}
  \calP_\uw(\be_k) &\seteq \sfS_{i_1} \ldots \sfS_{i_{k-1}} (f_{i_k})\in U^-_\bbA(\g) \qtq \\
  \calP_\uw^\up(\be_k) & \seteq  \zeta_{i_k}\iota\bl\calP_\uw(\be_k)\br.
\end{aligned}
\end{equation}
Note that when $\be_k = \al_i$ for some $i \in I$,
$\calP_\uw(\be_k)=f_i$ and $\calP_\uw^\up(\be_k)$
is equal to $\Ang{i}$ in~\eqref{eq: angi}. 
It is known that   $\calP^\up_\uw(\be_k)$  
belongs to $\Azn$ and  is called the \emph{dual root vector} corresponding to $\be_k$ and $\uw$.
The $\bbA$-subalgebra of $\Azn$ generated by $\{ \calP^\up_\uw(\be_k) \}_{1 \le k \le r}$
does not depend on the choice of a reduced expression $\uw$ of $w$,
which we denote by
$\Aznw$ (see~\cite[Section 4.7.2]{Kimura12}).  We call $A_q(\n(w))
\seteq\Qq\tens_{\Zq}\Aznw$ the \emph{quantum unipotent coordinate ring associated with $w$}. 

\smallskip 

For each $\bsu=(u_1,\ldots,u_r) \in \Z_{\ge0}^{r}$, set 
$$
\bfP^\up_\uw(\bsu) \seteq  \oprod_{k \in [1,r]} q_{i_k}^{u_k(u_k-1)/2} \calP^\up_\uw(\be_k)^{u_k}.  
$$
Then the set $\bfP^\up_\uw \seteq \{ \bfP^\up_\uw(\bsu) \ | \  \bsu \in \Z_{\ge0}^{r} \}$
forms a $\bbA$-basis of  $\Aznw$ and is called the \emph{dual PBW-basis} of $\Aznw$ associated with $\uw$. 

\begin{remark}
Note that there also exists the dual PBW-basis using $\{\sfS_i^{*}\}_{i\in I}$ instead of $\{\sfS_i\}_{i\in I}$, which will be denoted by
$\bfP^{*\up}_{\uw}$. 
Namely, 
\begin{align} \label{eq: PBW *}
\calP^*_\uw(\be_k) \seteq \sfS^*_{i_1} \ldots \sfS^*_{i_{k-1}} (f_{i_k}), \qquad   
  \calP_\uw^{*\up}(\be_k) \seteq 
\zeta_{i_k}\iota\Bigl(\calP^*_\uw(\be_k)\Bigr),
\end{align}
and 
$$
\bfP^{*\up}_\uw(\bsu) \seteq \rprod_{k \in [1,r]} q_{i_k}^{u_k(u_k-1)/2} \calP^{*\up}_\uw(\be_k)^{u_k} \quad\text{for $\bsu \in \Z_{\ge0}^{r}$.}
$$

\end{remark}

\subsection{Crystals and upper global bases} \label{subsec: crystal upper global}
In this subsection, we briefly recall infinite crystals and upper global bases. We refer
\cite{K91, K93, K95, Kashiwarabook} for more details.  

Let $B(\infty)$ be the \emph{infinite crystal} of the negative half $\Uqgm$, and let $\tf_i$ and $\te_i$ be the \emph{crystal operators} on $B(\infty)$.   
For any $b\in B(\infty)$, $\wt(b)$ stands for the weight of $b \in B(\infty) $.  
The $\Q(q)$-algebra anti-involution $*\colon U_q(\g) \to  U_q(\g)$ in~\eqref{eq: star involution} induces the  involution $*$ of $B(\infty)$
and defines another pair of crystal operators $\tf_i^*$ and $\te_i^*$ on $B(\infty)$:
$\tf_i^*b=\bl\tf_i(b^*)\br^*$ and $\te_i^*b=\bl\te_i(b^*)\br^*$.

Let $\overline{\phantom{a}}$ be the $\Q$-algebra anti-automorphism on $\Uqgm$ defined by 
$$
\overline{q}=q^{-1} \qtq \overline{f_i} =f_i. 
$$

Define the map $\sfc \in \End(\Aqn)$ by
\begin{align} \label{eq: sfc}
 \Ang{\sfc(f),x} = \overline{\Ang{f,\overline{x}}} \quad \text{ for any $f\in \Aqn$ and $x \in \Uqgm$.}    
\end{align}
Then it satisfies 
\begin{align} \label{eq: property of sfc}
\sfc(q) =q^{-1}  \qtq \sfc(fg) = q^{(\wt(f),\wt(g))} \sfc(g)\sfc(f) \quad \text{ for } f,g \in \Aqn   
\end{align}
(see~\cite[Proposition 3.6]{Kimura12} for  example). 

Let  
$\bGup \seteq \{ \Gup(b) \ | \ b \in B(\infty) \}$ be the \emph{upper global basis} of $\Azn$ (see \cite{K91, K93, K95} for its definition and properties).
Note that $e'_i\Gup(b)=0$ if $\te_i(b)=0$.

Note that $\Gup(b)$ is $\sfc$-invariant for any $b \in B(\infty)$. 
Set
$$
\LupA \seteq \sum_{b \in B(\infty)} \Z[q]\Gup(b) \subset \Azn.
$$

We regard $B(\infty)$ as a basis of $\LupA/q\LupA$ by 
\begin{align} \label{eq: local basis}
b \equiv \Gup(b) \ {\rm mod} \; q\LupA \quad\text{for $b \in B(\infty)$}.    
\end{align}

We know that $\aform{\Gup(b),\Gup(b')}|_{q=0} =\delta_{b,b'}$ and
hence $B(\infty)$ is an orthonormal basis of
$\LupA/q\LupA$, which implies that
the lattice $\LupA$ is characterized by 
$$
\LupA = \{ x \in \Azn \ | \  \aform{x,x} \in \Z[\hspace{-.4ex}[q]\hspace{-.4ex}] \subset \Q(\hspace{-.4ex}(q)\hspace{-.4ex}) \}.
$$ 

It is proved in~\cite[Theorem 4.29]{Kimura12} that $\bGup$ is \emph{compatible with $\Aznw$}; i.e., 
$$\bGup(w) \seteq \Aznw \cap \bGup \text{ forms a $\bbA$-basis of $\Aznw$.}$$
We set 
$$B(w) \seteq \{  b \in B(\infty) \mid \Gup(b)  \in  \Aznw\}.$$

For $i\in I$, set
 \eqn
&& \Aqn[i]\seteq\iota\bl\Uqgm[i]\br,
 \quad\Aan[i]\seteq\Aqn[i]\cap\Aan\qtq\\
 && \LupA[i]\seteq\Aqn[i]\cap\LupA,\eneqn
 and similarly $\Aqn[i]^*\seteq\iota\bl\Uqgm[i]^*\br$, etc.
 Then we have
 \eqn
 \Aqn[i]=\sum\limits_{b\in \Bi,\;\te_ib=0}\Qq\Gup(b).
\qtq \Aan[i]^*=\sum\limits_{b\in \Bi,\;\te_i^*b=0}\Zq\Gup(b).
 \eneqn
We denote the composition
$\Aqn[i] \To[\iota^{-1}] \Uqgm[i] \To[\sfS_i]  \Uqgm[i]^* \To[\iota] \Aqn[i]^*$ by $\sfS_i$ for simplicity, and define $\sfS_i^*$ on $\Aqn[i]^*$  in a similar manner.

 \Prop\label{prop:iseries}
 Let $i\in I$, $n\in\Z_{\ge0}$ and $x\in\Aqn$ such that $e'_i{}^{n+1}x=0$.
 Then we have
 \bnum
\item there exists a unique sequence $\stt{x_k}_{0\le k\le n}$
  in $\Aqn[i]$ such that
  $$x=\sum_{k=0}^n\ang{i^k}x_k,$$
\item
  if $x$ belongs to $\Aan$ \rmo resp.\ $\LupA$\rmf, then so do
  the $x_k$'s.
 \ee
 \enprop
 \Proof
 It follows from (\cite[Proposition 3.2.1]{K91}) and the fact that
$\Aan$ is stable by $e'_i{}^{(n)}$ for any $n$.
\QED
\Cor\label{cor:iseries}
For any $i\in I$, the multiplication gives  isomorphisms
\eqn
&&\Qq[\ang{i}]\tens_{\Qq}\Aqn[i]\isoto\Aqn,\\
&&\Zq[\ang{i}]\tens_{\Zq}\Aan[i]\isoto\Aan \qtq \\
&&\Bigl(\sum_{k\ge0}\Z[q]\ang{i^k}\Bigr)\tens_{\Z[q]}\LupA[i]\isoto\LupA.
\eneqn
\encor

\section{Bosonic extensions and global bases} \label{sec: bosonic}
In this section, we briefly review the bosonic extensions of 
quantum coordinate rings, which are mainly investigated in~\cite{OP24} for finite types and in~\cite{KKOP24} for arbitrary symmetrizable types (see also~\cite{HL15,FHOO,FHOO2,JLO2}). Let $\sfC=(c_{i,j})_{i,j\in I}$ be a generalized Cartan matrix of symmetrizable Kac-Moody type.
Recall that $\bfk=\Q(q^{1/2})$.

\subsection{Bosonic extensions $\hcalA$} 
The bosonic extension $\hcalA$ of the quantum coordinate ring $\Aqn$   
is the $\bfk$-algebra  generated by the generators
$\{f_{i,p}\mid i\in I,p\in \Z\}$ with the defining relations:
\begin{align}
& \sum_{k=0}^{b_{i,j}} (-1)^k \bnom{ b_{i,j} \\ k}_i
f_{i,p}^kf_{j,p}f_{i,p}^{b_{i,j}-k}=0   \text{  for any $i\ne j\in I$  and $p\in \Z$}, \label{it: def of hA (a)} \allowdisplaybreaks\\
& f_{i,m}f_{j,p}=q^{(-1)^{p-m+1}(\al_i,\al_j)}f_{j,p}f_{i,m}
+\delta_{(j,p),\,(i,m+1)}(1-q_i^2) \ \ \text{ if $m<p$}, \label{it: def of hA (b)}
\end{align}
where $b_{i,j}=1-c_{i,j}$.  We call~\eqref{it: def of hA (a)} the
\emph{$q$-Serre relations}  and~\eqref{it: def of hA (b)} the
\emph{$q$-boson relations}.

We set $\al_{i,m} \seteq (-1)^m\al_i$ for $i \in I$ and $m \in \Z$. Then the relations~\eqref{it: def of hA (a)} and~\eqref{it: def of hA (b)} are homogeneous
by assigning $\wt(f_{i,m})=-\al_{i,m}$. Hence $\hcalA$ admits a weight space decomposition 
$$
\hcalA = \soplus_{\be \in \rl} \hcalA_\be.
$$
We say that an element $x \in \hcalA_\be$ is homogeneous of weight $\beta$,
and   set $\wt(x) \seteq \be$.

There are several (anti-)automorphisms on $\hcalA$ defined as follows (see \cite[\S 3]{OP24} and \cite[\S 4]{KKOP24}):
\begin{eqnarray} &&\left\{
\parbox{75ex}{
\bnum
\item the $\Qh$-algebra anti-automorphism $\star$ 
 defined by $(f_{i,p})^\star = f_{i, -p}$,
\item the $\Q$-algebra anti-automorphism  $\calD$ defined by 
$$\calD (q^{\pm1/2}) = q^{\mp1/2} \qtq \calD(f_{i,p}) = f_{i, p+1},$$ 
\item  the $\Q$-algebra anti-automorphism $\overline{\phantom{a}}$ defined by
$$\overline{q^{\pm1/2}}=q^{\mp1/2} \qtq \overline{f_{i,p}}=f_{i,p},$$
\item the  $\Qh$-algebra  automorphism $\ocalD$  defined by 
$\ocalD(f_{i,p}) = f_{i, p+1}$. 
\ee
}\right. \label{eq: autos}
\end{eqnarray}

We define maps $c,\sigma: \hcalA \to \hcalA$ as follows: For any homogeneous $x \in \hcalA$,
\begin{align} \label{eq: c and sigma}
c(x) \seteq q^{N(\wt(x))} \overline{x} \qtq \sigma(x) \seteq q^{-N(\wt(x))/2}x,
\end{align}
where 
\begin{align}\label{eq: N}
N(\al) \seteq (\al,\al)/2 \quad \text{ for $\al \in \rl$.}
\end{align}
By the definition, we have
\begin{align} \label{eq: c property}
c(xy) =q^{(\wt(x),\wt(y))} c(y)c(x).    
\end{align}

Note that $N(\wt(x))/2 \in \Z/2$. It is easy to see that
(1) $\sigma \circ c = \overline{\phantom{a}} \circ  \sigma$ and
(2) $\sigma$ sends $c$-invariant elements to bar-invariant elements.

\begin{definition}
For $-\infty \le a \le b \le \infty$, let $\hcalA[a,b]$ be the $\bfk$-subalgebra
of $\hcalA$ generated by $\{ f_{i,k} \ | \  i \in I, a\le k \le b\}$.
We simply write 
$$
\hcalA[m] \seteq \hcalA[m,m], \quad 
\hcalA_{\ge m} \seteq \hcalA[m,\infty], \quad
\hcalA_{\le m} \seteq \hcalA[-\infty,m]. 
$$
Similarly, we set $\hcalA_{>m} \seteq \hcalA_{\ge m+1}$
and $\hcalA_{<m} \seteq \hcalA_{\le m-1}$. 
\end{definition}

\begin{theorem}[{\cite[Corollary 4.4]{KKOP24}}] \label{thm: existence}
\hfill 
    \bnum
    \item
    For $m \in \Z$, 
    we have an isomorphism
    $\Uqhm \seteq \Qh \tens_{\Qq} \Uqgm\isoto\hcalA[m]$
    by $f_i\mapsto f_{i,m}$.
    \item
    For any $a,b \in \Z$ with $a \le b$, the $\Qh$-linear map 
$$
\hcalA[b] \tens_\Qh \hcalA[b-1] \tens_\Qh \cdots \tens_\Qh \hcalA[a+1] \tens_\Qh \hcalA[a] \to \hcalA[a,b]
$$
defined by $x_b \tens x_{b-1} \tens \cdots \tens x_{a+1} \tens x_a
\longmapsto x_bx_{b-1}\cdots x_{a+1}x_a$ is an isomorphism.
\ee
\end{theorem}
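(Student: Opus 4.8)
The plan is to prove both isomorphisms simultaneously: elementary rewriting yields surjectivity and reduces the whole statement to a single injectivity assertion, which I then settle by constructing an explicit Fock-space representation of $\hcalA$.

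First the reduction. Since the family $\{f_{i,m}\}_{i\in I}$ satisfies the $q$-Serre relations \eqref{it: def of hA (a)} for each fixed $m$, there is a surjective $\Qh$-algebra homomorphism $\phi_m\cl\Uqhm\twoheadrightarrow\hcalA[m]$ with $f_i\mapsto f_{i,m}$. Next, inside any word in the generators $f_{i,k}$ with $a\le k\le b$, the $q$-boson relations \eqref{it: def of hA (b)} let one move a letter of index $m$ to the right past an adjacent letter of larger index $p$ at the cost of a scalar and, when $(j,p)=(i,m{+}1)$, of an extra strictly shorter word; inducting on the evident complexity statistic rewrites every such word as a $\Qh$-linear combination of ordered words $u_b u_{b-1}\cdots u_a$, where $u_k$ is a word in $\{f_{i,k}\}_{i\in I}$. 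Hence the multiplication map $\mu_{a,b}\cl\hcalA[b]\tens_\Qh\cdots\tens_\Qh\hcalA[a]\to\hcalA[a,b]$ is surjective, and therefore so is the composite $\Psi_{a,b}\seteq\mu_{a,b}\circ(\phi_b\tens\cdots\tens\phi_a)$, the natural $\Qh$-linear map from the tensor product of the $b-a+1$ copies of $\Uqhm$. Granting that $\Psi_{a,b}$ is injective: then each $\phi_k$ is injective, hence an isomorphism, which is (i) (take $a=b=m$); and then $\mu_{a,b}=\Psi_{a,b}\circ(\phi_b\tens\cdots\tens\phi_a)^{-1}$ is a bijection, which is (ii). So everything reduces to proving $\Psi_{a,b}$ injective.

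For that I would let $\hcalA$ act on the restricted tensor product $W$ of copies of $\Uqgm$ indexed by $\Z$ (pure tensors $\bigotimes_k v_k$ with $v_k=1$ for all but finitely many $k$; vacuum $\bone\seteq\bigotimes_k 1$), with $f_{i,k}$ acting as the sum of two operators: a multiplication part, namely left multiplication by $f_i$ on the $k$-th slot tensored with the diagonal operators $v\mapsto q^{\pm(\al_i,\wt(v))}v$ on all other slots; and a boson part, namely $(1-q_i^{-2})\,e'_i$ on the $(k{+}1)$-st slot tensored with diagonal operators on the remaining slots, the slot-wise grading signs being chosen so that both parts are homogeneous of weight $\wt(f_{i,k})$, which is exactly what generates the alternating exponent $q^{(-1)^{p-m+1}(\al_i,\al_j)}$ of \eqref{it: def of hA (b)}. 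One then verifies the defining relations: \eqref{it: def of hA (a)} reduces to the $q$-Serre relations for left multiplications by $f_i$ on $\Uqgm$ (the auxiliary parts act on other slots, and their contribution is independent of the position in the Serre sum), and \eqref{it: def of hA (b)} follows from $e'_i(f_j v)=\delta_{i,j}v+q^{-(\al_i,\al_j)}f_j e'_i(v)$, from $e'_i e'_j=e'_j e'_i$, and from \eqref{eq: e' es} for the grading bookkeeping, with the scalar $1-q_i^{-2}$ yielding exactly the inhomogeneous $\delta$-term. Finally, because $e'_i(1)=0$, the boson part never fires when the operators attached to $\Psi_{a,b}(x_b\tens\cdots\tens x_a)$ are applied to $\bone$: reading the product $u_b\cdots u_a$ from the right, each boson part reaches a slot still holding $1$. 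Hence $\Psi_{a,b}(x_b\tens\cdots\tens x_a)\cdot\bone$ is a nonzero scalar (a power of $q^{1/2}$, for homogeneous $x_k$) times the pure tensor having $x_a,\dots,x_b$ in slots $a,\dots,b$; comparing weight spaces then gives injectivity of $\Psi_{a,b}$.

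The hardest part is the verification that these Fock-space operators genuinely satisfy \eqref{it: def of hA (a)}--\eqref{it: def of hA (b)}: pinning down the slot-wise grading signs and the normalization of the $e'_i$-part, and then checking consistency via \eqref{eq: e' es} together with $\es_i=*\circ e'_i\circ*$ and $e'_i\es_j=\es_j e'_i$; this is routine but bookkeeping-heavy. An alternative that avoids the infinite tensor product is an induction on $b-a$: the base case $\hcalA[m,m+1]\cong\Bq$ identifies it with the $q$-deformed boson algebra of \cite{K91}, whose structure as a vector space ($\Bq\cong\Uqgm\tens\Uqgm$, via its Fock representation on $\Uqgm$) is known; the inductive step presents $\hcalA[a,b]$ as a PBW-type deformation of $\hcalA[a,b-1]$ by the adjoined generators $\{f_{i,b}\}_{i\in I}$, and the linear independence of ordered monomials then follows from Bergman's diamond lemma, where the obstacle becomes the resolution of the overlap ambiguities coming from the $q$-Serre relations. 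I would take the Fock-space argument as primary, since it handles all intervals uniformly.
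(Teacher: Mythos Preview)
The paper does not prove this theorem: it is quoted from \cite[Corollary~4.4]{KKOP24}, so there is no proof here to compare against directly. Your reduction to the injectivity of $\Psi_{a,b}$ is clean and correct, and the Fock-space strategy is the natural one; it is essentially how the $q$-boson algebra is handled in \cite{K91} for a single pair of layers, and an iterated or global version along your lines is what one expects underlies the cited result.

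There is, however, a genuine gap in your verification of the $q$-Serre relations \eqref{it: def of hA (a)}. You write that the relation ``reduces to the $q$-Serre relations for left multiplications by $f_i$ on $\Uqgm$ (the auxiliary parts act on other slots, and their contribution is independent of the position in the Serre sum)''. But the boson parts of $f_{i,m}$ and $f_{j,m}$ both act nontrivially on slot $m{+}1$ (by $e'_i$ and $e'_j$), so they are not merely ``auxiliary parts on other slots'': they interact with one another and with the diagonal factors that the multiplication parts carry on slot $m{+}1$. When you expand $\rho(f_{i,m})^r\rho(f_{j,m})\rho(f_{i,m})^s$ with $\rho(f_{\bullet,m})=M_\bullet+B_\bullet$, the pure-$M$ term vanishes by $q$-Serre on slot $m$ and the pure-$B$ term vanishes by the dual $q$-Serre for the $e'_\bullet$ on slot $m{+}1$, but the mixed terms do not cancel for free. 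What makes them cancel is that, with the correct choice of diagonal twists, the assignment $f_i\mapsto M_i+B_i$ is (up to normalization) the image of $f_i$ under an algebra homomorphism of coproduct type; equivalently, one must arrange the slot-$m$/slot-$(m{+}1)$ factors so that $M_i+B_i$ mimics $\Delta_\n(f_i)$, and then $q$-Serre follows because $\Delta_\n$ is multiplicative. That is the missing idea, and it also pins down the signs $q^{\pm(\al_i,\wt(v))}$ and the scalar in front of $e'_i$ (your $(1-q_i^{-2})$ should be rechecked against the exact conventions in \eqref{eq: e' 1} and \eqref{it: def of hA (b)}). Once this is set up, the boson relations \eqref{it: def of hA (b)} and your vacuum argument for injectivity go through as you describe.

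Your alternative via the diamond lemma, or an induction on $b-a$ using the identification of $\hcalA[m,m{+}1]$ with a quotient of a subalgebra of $U_q(\g)$ (compare the map $K_m$ in the proof of Theorem~\ref{thm: T-braid}), avoids the infinite tensor product but trades one bookkeeping problem for another. Either route is fine; just be aware that in the Fock approach the $q$-Serre step is not the triviality your sketch suggests.
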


From Theorem~\ref{thm: existence}, any element $x$ in $ \hcalA[a,b]$ can be expressed as 
$$
x = \sum_{t} x_{b,t} x_{b-1,t} \cdots x_{a,t},
$$  
where $x_{k,t}$ is a homogeneous element in $\hcalA[k]$ and $t$ runs over a finite set. 

\subsection{Bilinear forms on $\hcalA$} 
For homogeneous elements $x,y \in \hcalA$, we set
\begin{align*}
[x,y]_q \seteq xy - q^{-(\wt\, x,\wt\, y)}yx.     
\end{align*}
and extend it to a bilinear homomorphism
$[\ ,\ ]_q\cl \hA\times\hA\to\hA$. 

Then, for any homogeneous elements $x,y,z \in \hcalA$, we have
\begin{equation} 
  \begin{aligned}
{}[x,yz]_q &= [x,y]_qz +q^{-(\wt\, x,\wt\, y)}y[x,z]_q, \\  
{}[xy,z]_q &= x[y,z]_q +q^{-(\wt\,  y,\wt\, z)}[x,z]_qy.    
\end{aligned}\label{eq: []q1}
\end{equation}

\begin{definition}[{\cite[\S 5]{KKOP24}}]
For any $i \in I$ and $m\in \Z$, let $\rmE_{i,m}$ and $\Es_{i,m}$ be the endomorphisms 
of $\hcalA$ defined by 
\begin{align} \label{eq: Ei Esi}
\rmE_{i,m}(x) \seteq [x,f_{i,m+1}]_q \qtq  \Es_{i,m}(x) \seteq [f_{i,m-1},x]_q     
\end{align}
for $x \in \hcalA$.
\end{definition}

Then  $\rmE_{i,m}$ and $\Es_{i,m}$ satisfy the followings: 
\bna
\item $\Es_{i,m} = \star \circ \rmE_{i,-m} \circ \star\;$,
\item $\rmE_{i,m}(f_{j,m}) = \Es_{i,m}(f_{j,m}) =\delta_{i,j}\zeta_i$,
\item $x f_{i,m+1} =  \rmE_{i,m}(x) + q^{-(\al_{i,m},\wt\, x)} f_{i,m+1}x$,
\item $f_{i,m-1}x  =  \Es_{i,m}(x) + q^{-(\al_{i,m},\wt\, x)} x f_{i,m-1}$.
\ee 

Theorem~\ref{thm: existence} says that 
$\hcalA$ admits the decomposition
\begin{align} \label{eq: hcalA decomposition}
\hcalA = \soplus_{(\be_k)_{k \in \Z} \in \rl^{\oplus \Z}} \oprod_{k \in \Z} \hcalA[k]_{\be_k}.     
\end{align}

We define 
$$
\Mn \colon \hcalA\longepito \Qh
$$
to be the natural projection
$ \hcalA\longepito \displaystyle\oprod_{k\in\Z}\hcalA[k]_{0}\simeq\Qh$ deduced from~\eqref{eq: hcalA decomposition}.

\begin{definition}
We define a bilinear form $\hAform{ \ , \ }$ on $\hcalA$ as follows:
\begin{align} \label{eq: hA form}
\hAform{x,y} \seteq \Mn(x \ocalD(y)) \in \Qh \quad \text{ for any } x,y \in \hcalA.   
\end{align}
\end{definition}

\begin{theorem}[{\cite[\S 5]{KKOP24}}] \label{thm: hAform}
The bilinear form $\hAform{ \ , \ }$ is  non-degenerate and symmetric. Furthermore the form $\hAform{ \ , \ }$ satisfies the following properties:
\bna
\item $\hAform{x,y} = \hAform{\ocalD(x), \ocalD(y)} = \hAform{ y^\star,x^\star}$ for any $x,y \in \hA$.
\item $\hAform{f_{i,m}x, y } =  \hAform{x, y f_{i,m+1} }$ and $\hAform{xf_{i,m}, y } =  \hAform{x, f_{i,m-1} y }$ for any $x,y \in \hA$.
\item For any $x,y\in\hcalA_{\le m}$ and $u, v \in\hcalA_{\ge m}$, we have
$$
\hAform{f_{i,m}x,y}=\hAform{ x,\rmE_{i,m}(y)} \qtq \hAform{u , vf_{i,m} }=\hAform{\Es_{i,m}(u) ,v}.
$$
\item $(x,y)=0$ if $x,y$ are homogeneous elements such that
  $\wt(x)\not=\wt(y)$. 
\item For $x = \displaystyle\oprod_{k \in [a,b]} x_k$ and $y = \displaystyle\oprod_{k \in [a,b]} y_k$ with $x_k,y_k \in \hcalA[k]$, we have
$$
\hAform{x,y} = q^{\sum_{s<t} (\wt(x_s),\wt(x_t))} \prod_{k\in [a,b]} \delta(\wt(x_k)=\wt(y_k)) \hAform{x_k,y_k}. 
$$
\ee
\end{theorem}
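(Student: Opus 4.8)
The plan is to reduce every assertion to the single-slot algebras $\hcalA[m]\simeq\Uqhm$ of Theorem~\ref{thm: existence}, using the $q$-boson relations~\eqref{it: def of hA (b)} to move a factor of $\hcalA[k]$ past a factor of $\hcalA[l]$: when $|k-l|\ge2$ this costs only an explicit power of $q$, while when $|k-l|=1$ it costs that power plus a ``defect'' term involving strictly fewer generators. Two of the assertions are formal. Since $\wt(f_{i,p+1})=-\wt(f_{i,p})$, the automorphism $\ocalD$ reverses weight, so $\wt\bl x\ocalD(y)\br=\wt(x)-\wt(y)$; as $\Mn$ kills every homogeneous element of nonzero weight, this gives~(d). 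Both $\ocalD$ and $\star$ are $\Qh$-linear and merely shift or reverse the slot index, hence fix $\oprod_{k}\hcalA[k]_0\simeq\Qh$ and permute the summands of~\eqref{eq: hcalA decomposition}; thus $\Mn\circ\ocalD=\Mn=\Mn\circ\star$. Consequently $\hAform{x,y}=\Mn\bl x\ocalD(y)\br=\Mn\bl\ocalD^{-1}(x)\,y\br$ is unchanged when $\ocalD$ is applied to both arguments, and, writing $y^\star\ocalD(x^\star)=\star\bl\ocalD^{-1}(x)\,y\br$ (using that $\star$ is an anti-automorphism with $\star\ocalD=\ocalD^{-1}\star$) and applying $\Mn\circ\star=\Mn$, we also obtain $\hAform{y^\star,x^\star}=\hAform{x,y}$; this is~(a). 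The second identity of~(b) is equally immediate, since $\ocalD(f_{i,m-1}y)=f_{i,m}\ocalD(y)$ gives $\hAform{xf_{i,m},y}=\Mn\bl x\ocalD(f_{i,m-1}y)\br=\hAform{x,f_{i,m-1}y}$.

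The technical heart is the first identity of~(b), equivalently the \emph{sliding identity}
$$\Mn(f_{i,m}\,w)=\Mn(w\,f_{i,m+2})\qquad(w\in\hcalA),$$
which implies it because $\ocalD(yf_{i,m+1})=\ocalD(y)f_{i,m+2}$. Rewriting~\eqref{eq: Ei Esi} as $f_{i,m}w=\Es_{i,m+1}(w)+q^{(\al_{i,m},\wt w)}wf_{i,m}$ and $wf_{i,m+2}=\rmE_{i,m+1}(w)+q^{(\al_{i,m},\wt w)}f_{i,m+2}w$ for homogeneous $w$, the sliding identity becomes the equality of $\Mn\bl\Es_{i,m+1}(w)-\rmE_{i,m+1}(w)\br$ with $q^{(\al_{i,m},\wt w)}\Mn\bl f_{i,m+2}w-wf_{i,m}\br$; one proves this by induction on $\Vert\wt(w)\Vert$, the twisted Leibniz rules~\eqref{eq: []q1} reducing a product $w=f_{j,p}w'$ to strictly smaller weight plus boundary contributions governed by the $q$-boson relations in the single pair of adjacent slots with which $f_{j,p}$ can interact, and the base case collapsing to the classical identities $e'_i(f_i)=\es_i(f_i)$ and $e'_i\es_j=\es_j e'_i$ in $\Uqgm$ (the former since $f_i^*=f_i$ and $\es_i=*\circ e'_i\circ*$).

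Granting the first identity of~(b), symmetry follows by \emph{peeling}: writing $x$ as a product of Chevalley generators and applying that identity repeatedly moves them all onto the second argument, whence $\hAform{x,y}=\hAform{1,\,y\ocalD(x)}=\Mn\bl\ocalD(y\ocalD(x))\br=\Mn\bl y\ocalD(x)\br=\hAform{y,x}$, using $\Mn\circ\ocalD=\Mn$. For~(c): if $x,y\in\hcalA_{\le m}$, part~(b) gives $\hAform{f_{i,m}x,y}=\hAform{x,yf_{i,m+1}}$, and $yf_{i,m+1}=\rmE_{i,m}(y)+q^{-(\al_{i,m},\wt y)}f_{i,m+1}y$; the term $\hAform{x,f_{i,m+1}y}$ vanishes, because by symmetry and~(b) it equals a power of $q$ times $\hAform{y,xf_{i,m+2}}$, and since $x\in\hcalA_{\le m}$ the generators $f_{i,m+2}$ and then $f_{i,m+3}$ commute past $x$, resp.\ $\ocalD(x)$, without defect, leaving $q^{\bullet}\,\Mn\bl y\ocalD(x)f_{i,m+3}\br=0$ by weight; hence $\hAform{f_{i,m}x,y}=\hAform{x,\rmE_{i,m}(y)}$, and the other identity of~(c) follows by applying the $\star$-symmetry of~(a).

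It remains to prove~(e) and non-degeneracy. One establishes~(e) by induction on $b-a$: splitting off the bottom slots, $x=x''x_a$ and $y=y''y_a$ with $x'',y''\in\hcalA[a{+}1,b]$, and applying $\ocalD$, the factor $x_a\in\hcalA[a]$ commutes past $\ocalD(y'')\in\hcalA[a{+}2,b{+}1]$ with no defect and contributes $q^{(\wt x_a,\wt y'')}$, so the only genuine interaction is between the two adjacent slots $a$ and $a{+}1$, where a direct computation with~\eqref{it: def of hA (b)} — equivalently, the fact that the restriction of $\hAform{\cdot,\cdot}$ to two consecutive slots is the standard $q$-boson pairing — yields the factorized answer, the power $q^{\sum_{s<t}(\wt x_s,\wt x_t)}$ accumulating from the successive commutations. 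Once~(e) is known, non-degeneracy is immediate: the restriction of $\hAform{\cdot,\cdot}$ to each slot $\hcalA[m]\simeq\Uqhm$ is non-degenerate (it is, up to a unit rescaling, Kashiwara's non-degenerate symmetric form $\kform{\cdot,\cdot}$ on $\Uqgm$ recalled in Section~1), a tensor product of non-degenerate forms is non-degenerate, and the scalar $q^{\sum_{s<t}(\wt x_s,\wt x_t)}$ is a unit, so $\hAform{\cdot,\cdot}$ is non-degenerate on every $\hcalA[a,b]$ and hence on $\hcalA=\bigcup_{a\le b}\hcalA[a,b]$. The main obstacle throughout is the sliding identity and, to a lesser extent, the two-slot case of~(e): in both, one must verify that the non-commutativity is always confined to a single pair of adjacent slots and that the defect terms organize, via the elementary identities for $e'_i$ and $\es_i$, into exactly the cancellation needed.
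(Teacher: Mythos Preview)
The paper does not prove this theorem: it is quoted verbatim from \cite[\S 5]{KKOP24}, so there is no ``paper's own proof'' here to compare against. Your sketch is a reasonable outline of how such a proof would go, and you correctly isolate the two genuine technical steps --- the sliding identity $\Mn(f_{i,m}w)=\Mn(wf_{i,m+2})$ underlying the first equality of~(b), and the two-slot base case of~(e), which amounts to identifying the restriction of $\hAform{\cdot,\cdot}$ to $\hcalA[m,m{+}1]$ with the $q$-boson pairing of \cite{K91}. The formal parts (a), (d), the second half of~(b), and the derivations of symmetry, (c), and non-degeneracy from~(b) and~(e) are all correct as written.

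That said, the sliding-identity argument is not yet a proof. Your rewriting yields
\[
\Mn(f_{i,m}w)-\Mn(wf_{i,m+2})=\Mn\bl\Es_{i,m+1}(w)-\rmE_{i,m+1}(w)\br+q^{(\al_{i,m},\wt w)}\bl\Mn(wf_{i,m})-\Mn(f_{i,m+2}w)\br,
\]
which replaces one sliding problem by two new ones (at indices $m$ and $m{+}2$) plus a term involving $\Es_{i,m+1}-\rmE_{i,m+1}$; you have not explained why this recursion terminates. Writing $w=f_{j,p}w'$ and applying the twisted Leibniz rule does lower $\Vert\wt(w')\Vert$, but the boundary terms $[f_{i,m},f_{j,p}]_q\,w'$ etc.\ have to be tracked through all the interacting slots, and you have not shown that the resulting contributions cancel. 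Similarly, the ``direct computation'' asserted for the two-slot case of~(e) is precisely the content of identifying the boson pairing with Kashiwara's form, which itself requires an inductive argument on weight. These are fillable gaps and your strategy is sound, but as written the core inductions are asserted rather than executed.
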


Using $\sigma$ in~\eqref{eq: c and sigma} and $N$ in~\eqref{eq: N}, we define another bilinear form $\pair{ \ , \ }$ on $\hcalA$ as follows:
\begin{align} \label{eq: pair form}
\pair{x,y} \seteq \hAform{\sigma(x),\sigma(y)} = q^{-N(\wt(x))}\hAform{x,y} \ \  \text{ for any homogeneous } x,y \in \hcalA.  
\end{align}

Note that 
\begin{align*}
\hAform{f_{i,p},f_{i,p}}=1-q_i^2 \qtq \pair{f_{i,p},f_{i,p}} = q_i^{-1}-q_i,    
\end{align*}
and more generally
\begin{align*}
\hAform{f_{i,p}^n,f_{i,p}^n} =\prod_{k=1}^n (1-q_i^{2k}) \qtq \pair{f_{i,p}^n,f_{i,p}^n} = q_i^{-n^2} \prod_{k=1}^n (1-q_i^{2k}).     
\end{align*}

For each $m \in \Z$, we define a $\Q(q)$-algebra homomorphism  
\begin{align} \label{eq: vphi m}
\varphi_{m} \colon \Aqn  \longrightarrow \hcalA[m] \qquad \text{ by } \varphi_m(\Ang{i}) = q_i^{1/2} f_{i,m}, 
\end{align}
and set 
\begin{align} \label{eq: psi}
\psi_m \colon \Uqgm \isoto[\iota] \Aqn \underset{\vph_m}{\longrightarrow} \hcalA.   
\end{align} 
Note that $\varphi_{m}$ induces a $\Qh$-algebra isomorphism between 
$\Aqhn$ and $\hcalA[m]$. 

\begin{proposition}[{\cite[Proposition 5.6]{KKOP24}}] \label{prop: pair}
  The pairing $\pair{\ ,\ }$ have the following properties.
\bnum
\item \label{it: pair (i)}
The bilinear form $\pair{\ , \ }$ is symmetric and non-degenerate.
\item \label{it: pair (ii)}
For 
 $x=\displaystyle\oprod_{k\in[a,b]}x_k$ and
$y=\displaystyle\oprod_{k\in[a,b]}y_k$ with $x_k,y_k\in\hcalA[k]$,
we have
$$
\pair{x,y} = \prod_{k\in[a,b]} \pair{x_k,y_k}.
$$
\item \label{it: pair (iii)} 
For any $x,y\in \Aqn$ and $m\in\Z$, we have 
$$
\aform{x,y} = \pair{\varphi_m(x), \varphi_m(y) }.
$$
\ee
\end{proposition}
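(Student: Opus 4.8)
The plan is to deduce all three statements from the properties of the bilinear form $\hAform{\ ,\ }$ recorded in Theorem~\ref{thm: hAform}, using $\pair{x,y}=q^{-N(\wt(x))}\hAform{x,y}$ for homogeneous $x,y$; parts (i) and (ii) are bookkeeping, and the content is in part (iii). For \emph{part (i)}: if $\wt(x)=\wt(y)$, then $N(\wt(x))=N(\wt(y))$ and symmetry of $\pair{\ ,\ }$ follows from that of $\hAform{\ ,\ }$; if $\wt(x)\neq\wt(y)$, then $\hAform{x,y}=\hAform{y,x}=0$ by Theorem~\ref{thm: hAform}(d), so $\pair{x,y}=\pair{y,x}=0$; extend by bilinearity. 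Non-degeneracy follows because $\sigma$ is a bijection of $\hcalA$ (on $\hcalA_\beta$ it is multiplication by the unit $q^{-N(\beta)/2}$) and $\pair{x,y}=\hAform{\sigma(x),\sigma(y)}$ with $\hAform{\ ,\ }$ non-degenerate. For \emph{part (ii)}: taking the $x_k,y_k$ homogeneous (by bilinearity), I would combine Theorem~\ref{thm: hAform}(e) with the identity $N\bl\textstyle\sum_k\wt(x_k)\br=\sum_kN(\wt(x_k))+\sum_{s<t}(\wt(x_s),\wt(x_t))$; the factor $q^{\sum_{s<t}(\wt(x_s),\wt(x_t))}$ produced by Theorem~\ref{thm: hAform}(e) is then cancelled exactly by the cross-terms in $q^{-N(\wt(x))}$, leaving $\prod_k q^{-N(\wt(x_k))}\,\delta(\wt(x_k)=\wt(y_k))\,\hAform{x_k,y_k}$, and since $\hAform{x_k,y_k}=0$ unless $\wt(x_k)=\wt(y_k)$, each factor equals $\pair{x_k,y_k}$.

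For \emph{part (iii)}, note first that $\aform{\ ,\ }$ is weight-graded (as $\Ang{\ ,\ }$ is) and that $\varphi_m$ only twists weights by the sign $(-1)^m$, which does not change $N$; hence both sides vanish unless $\wt(x)=\wt(y)$, and for homogeneous $x,y$ of equal weight the claim becomes
$$q^{N(\wt(x))}\aform{x,y}=\hAform{\varphi_m(x),\varphi_m(y)}.$$
I would prove this by induction on $\het(-\wt(x))$, the case $\wt(x)=0$ being immediate from $\hAform{1,1}=\aform{1,1}=1$. For the inductive step, write $x=\Ang{i}g$ with $g\in\Aqn$ (legitimate after expressing $x$ as a linear combination of such elements, since $\Aqn$ is generated by the $\Ang{j}$). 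On the left, the operator $D_i\seteq\iota\circ e'_i\circ\iota^{-1}$ on $\Aqn$ is the $\aform{\ ,\ }$-adjoint of left multiplication by $\Ang{i}$ --- immediate from \eqref{eq: zeta power}, the relation $\kform{f_ix,y}=\kform{x,e'_i(y)}$, and $\iota(f_i)=\zeta_i^{-1}\Ang{i}$ --- so $\aform{x,y}=\aform{g,D_i(y)}$, and expanding $N(-\al_i+\wt(g))$ gives $q^{N(\wt(x))}=q_i\,q^{-(\al_i,\wt(g))}\,q^{N(\wt(g))}$. On the right, $\varphi_m(x)=q_i^{1/2}\,f_{i,m}\,\varphi_m(g)$ by \eqref{eq: vphi m}, and Theorem~\ref{thm: hAform}(c) (applicable since $\varphi_m(g),\varphi_m(y)\in\hcalA[m]\subseteq\hcalA_{\le m}$) gives $\hAform{\varphi_m(x),\varphi_m(y)}=q_i^{1/2}\,\hAform{\varphi_m(g),\rmE_{i,m}(\varphi_m(y))}$. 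Feeding in the inductive hypothesis for the pair $g,\,D_i(y)$ (which have the same weight, of strictly smaller height), the step is reduced to the operator identity
$$\rmE_{i,m}\bl\varphi_m(z)\br=q_i^{-3/2}\,q^{-(\al_i,\wt(z))}\,\varphi_m\bl D_i(z)\br\qquad\text{for homogeneous }z\in\Aqn.$$

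This last identity is where the work lies. The plan is to transport both sides to $\Aqn$ via $\varphi_m^{-1}$ and check that the resulting two operators are the same skew-derivation of $\Aqn$: by \eqref{eq: []q1} (using $\wt(f_{i,m+1})=\al_{i,m}$ and $\wt(\varphi_m(g))=(-1)^m\wt(g)$) the transported $\rmE_{i,m}$ satisfies the Leibniz rule $E(fg)=f\,E(g)+q^{-(\al_i,\wt(g))}\,E(f)\,g$, while by \eqref{eq: e' 1} the twisted operator $z\mapsto q_i^{-3/2}q^{-(\al_i,\wt(z))}D_i(z)$ satisfies the \emph{same} rule once the sign $(-1)^m$ is absorbed; since an operator obeying this twisted rule is determined by its values on the algebra generators $\Ang{j}$, and both operators send $\Ang{j}$ to $\delta_{i,j}\,q_i^{1/2}\zeta_i$ (using $\rmE_{i,m}(f_{j,m})=\delta_{i,j}\zeta_i$ and $D_i(\Ang{j})=\delta_{i,j}\zeta_i$, after checking the powers of $q_i$), they coincide. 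The main obstacle is precisely this matching: the natural skew-derivations $\rmE_{i,m}$ on $\hcalA[m]$ and $D_i$ on $\Aqn$ are \emph{not} literally intertwined by $\varphi_m$; the twist $q_i^{-3/2}q^{-(\al_i,\wt)}$ is forced by the discrepancy between $\hAform{f_{i,p},f_{i,p}}=\zeta_i$ and $\kform{f_i,f_i}=1$ together with the sign hidden in the weight-twist of $\varphi_m$, and arranging that every $q$-power and $\zeta$-factor cancels is the one step demanding genuine care --- the rest of the argument is formal.
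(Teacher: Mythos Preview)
The paper does not prove this proposition; it is quoted from \cite[Proposition~5.6]{KKOP24}, so there is no argument in the present paper to compare against.

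That said, your approach is correct.  Parts~(i) and~(ii) are, as you say, bookkeeping: for~(ii) the identity $N\bigl(\sum_k\beta_k\bigr)=\sum_kN(\beta_k)+\sum_{s<t}(\beta_s,\beta_t)$ exactly cancels the cross-term in Theorem~\ref{thm: hAform}\,(e).  For part~(iii), your reduction to the operator identity
\[
\rmE_{i,m}\bigl(\varphi_m(z)\bigr)=q_i^{-3/2}\,q^{-(\al_i,\wt(z))}\,\varphi_m\bigl(D_i(z)\bigr)
\]
is valid and the verification goes through.  Both sides obey the twisted Leibniz rule $E(fg)=\varphi_m(f)\,E(g)+q^{-(\al_i,\wt g)}\,E(f)\,\varphi_m(g)$: on the left this comes from \eqref{eq: []q1}, where the two factors of $(-1)^m$ (one from $\wt(f_{i,m+1})=(-1)^m\al_i$, one from $\wt(\varphi_m(g))=(-1)^m\wt(g)$) cancel, so there is in fact no residual sign to ``absorb''; on the right it comes from \eqref{eq: e' 1} after multiplying through by $q^{-(\al_i,\wt(\cdot))}$.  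On generators $z=\Ang{j}$ both sides give $\delta_{i,j}\,q_i^{1/2}\zeta_i$, since $D_i(\Ang{j})=\delta_{i,j}\zeta_i$ and $q_i^{-3/2}q^{(\al_i,\al_i)}=q_i^{1/2}$.  One small organizational remark: it is worth noting explicitly that $\rmE_{i,m}$ preserves $\hcalA[m]$ (immediate from the $q$-boson relation, or equivalently from the operator identity itself), so that after applying Theorem~\ref{thm: hAform}\,(c) you are genuinely pairing two elements of $\hcalA[m]$ and the inductive hypothesis applies directly via $\varphi_m^{-1}$.
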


\subsection{Extended crystals $\hBi$} Recall the infinite crystal $B(\infty)$ in Section~\ref{subsec: crystal upper global}. The \emph{extended crystal}, introduced in~\cite{KP22,Park23}, is defined
as 
\begin{align} \label{Eq: extended crystal}
	\hB(\infty) \seteq   \Bigl\{  (b_k)_{k\in \Z } \in \prod_{k\in \Z} B(\infty)  \biggm | b_k =\mathsf{1} \text{ for all but finitely many $k$}  \Bigr\},
\end{align}
where $\mathsf{1}$ is the highest weight element of $B(\infty)$. The \emph{extended crystal operators} on $\hB(\infty)$ are defined by the usual crystal operators $\tf_i$, $\te_i$, $\tf_i^*$ and $\te_i^*$, We refer \cite{KP22, Park23}
for details. 
\bna 
\item The involution 
${}^\star \colon \hB(\infty) \rightarrow \hB(\infty)$ is defined as follows:    
\begin{align*} 
\bfb^\star = (b_k')_{k\in \Z} \qquad \text{for any $\bfb = (b_k)_{k\in \Z} \in \hB(\infty)$},
\end{align*}
where $b_k' \seteq (b_{-k})^*$   for any $ k\in \Z$. 
\item 
The bijection
$\ocalD \colon \hB(\infty) \rightarrow \hB(\infty)$ is defined as follows:  
\begin{align*} 
\ocalD(\bfb) = (b_k')_{k\in \Z}  \qquad \text{for any $\bfb = (b_k)_{k\in \Z} \in \hB(\infty)$},
\end{align*}
where $b_k' \seteq b_{k-1}$   for any $ k\in \Z$. 
\ee

\subsection{Global bases of $\hcalA$} \label{subsec: global hatA}
Using the isomorphism $\vph_k$ $(k\in \Z)$, we define
$$
\hcalA_{\bbA}[k] \seteq \vph_{k}(\Azn) \subset \hcalA.
$$
We also define 
$$
\hcalA_\bbA[a,b] \seteq \oprod_{k \in [a,b]} \hcalA_\bbA[k]\subset \hcalA,  \qquad 
\hcalA_\bbA \seteq \sbcup_{a \le b} \hcalA_{\bbA}[a,b]\subset \hcalA. 
$$

\begin{proposition}[{\cite[Proposition 6.2]{KKOP24}}]\label{prop: hcalA integral form}\hfill
\bnum
\item $\hcalA_\bbA$ is a $\bbA$-subalgebra of $\hcalA$, and $\Qh \tens_\bbA \hcalA_\bbA \isoto \hcalA$. 
\item We have
\begin{align} \label{eq: hatA z characterization} 
\hcalA_{\bbA} = \left\{ x \in \hcalA \ \biggm| \ \pair{x,y} \in \bbA \text{ for any } y \in  \displaystyle\oprod_{m \in \Z} \psi_m (U_{\bbA}^-(\g)) \right \}.
\end{align}
\item $\hcalA_\bbA$ is invariant by $c$.
\ee    
\end{proposition}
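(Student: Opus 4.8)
The plan is to deduce all three assertions from their single‑copy analogues for $\hcalA_\bbA[m]=\varphi_m(\Azn)\subseteq\hcalA[m]$, and then to globalize with the help of Theorem~\ref{thm: existence} and Proposition~\ref{prop: pair}.

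\emph{Single copy.} Since $\varphi_m\colon\Aqhn\isoto\hcalA[m]$ is a $\Qh$–algebra isomorphism carrying $\Azn$ onto $\hcalA_\bbA[m]$, and $\Azn=\Aan$ is a $\bbA$–subalgebra of $\Aqn$, free over $\bbA$ (with the upper global basis as a basis) and with $\Qq\tens_\bbA\Azn\isoto\Aqn$, it follows at once that $\hcalA_\bbA[m]$ is a $\bbA$–subalgebra of $\hcalA[m]$, free over $\bbA$, with $\Qh\tens_\bbA\hcalA_\bbA[m]\isoto\hcalA[m]$. For the $c$–invariance I would check that $\varphi_m$ intertwines $\sfc$ and $c$: both are $\Q$–linear involutions sending $q\mapsto q^{-1}$, satisfying the same twisted anti–multiplicativity (compare \eqref{eq: property of sfc} and \eqref{eq: c property}, using $N(\al)=N(-\al)$ and that $\varphi_m$ rescales weights by $(-1)^m$), and agreeing on the algebra generators $\langle i\rangle$ of $\Aqn$ (one computes $c(q_i^{1/2}f_{i,m})=q_i^{1/2}f_{i,m}$); since each $\Gup(b)$ is $\sfc$–invariant, $\hcalA_\bbA[m]=\varphi_m\bl\soplus_b\bbA\,\Gup(b)\br$ is $c$–invariant. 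Finally $\psi_m(\Uzgm)=\varphi_m\bl\iota(\Uzgm)\br$, and applying $\varphi_m$ to the defining property $\Aan=\{f\in\Aqn\mid\langle f,\Uzgm\rangle\subseteq\bbA\}$, together with Proposition~\ref{prop: pair} and the identity $\aform{f,\iota(u)}=\langle f,u\rangle$, yields the single–copy form of \eqref{eq: hatA z characterization}:
\[
\hcalA_\bbA[m]=\{\,x\in\hcalA[m]\mid\pair{x,\psi_m(\Uzgm)}\subseteq\bbA\,\};
\]
moreover the pairing $\hcalA_\bbA[m]\times\psi_m(\Uzgm)\to\bbA$ is perfect, being the transport of the perfect $\bbA$–pairing $\Aan\times\Uzgm\to\bbA$.

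\emph{Globalization.} Fix $a\le b$. By Theorem~\ref{thm: existence}, multiplication is a $\Qh$–linear isomorphism $\hcalA[b]\tens\cdots\tens\hcalA[a]\isoto\hcalA[a,b]$; by Proposition~\ref{prop: pair}, the form $\pair{\ ,\ }$ on $\hcalA[a,b]$ is the tensor product of the forms on the factors, and it vanishes between distinct components of the grading \eqref{eq: hcalA decomposition}. Hence $\hcalA_\bbA[a,b]=\oprod_{k\in[a,b]}\hcalA_\bbA[k]$ is the image of $\hcalA_\bbA[b]\tens_\bbA\cdots\tens_\bbA\hcalA_\bbA[a]$ under multiplication — a free $\bbA$–lattice with $\Qh\tens_\bbA\hcalA_\bbA[a,b]\isoto\hcalA[a,b]$ — and it is perfectly paired with $\oprod_{k\in[a,b]}\psi_k(\Uzgm)$, since a tensor product of finitely many perfect $\bbA$–pairings of free modules is perfect; this gives $\hcalA_\bbA[a,b]=\{\,x\in\hcalA[a,b]\mid\pair{x,\oprod_{k\in[a,b]}\psi_k(\Uzgm)}\subseteq\bbA\,\}$. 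Since $1\in\hcalA_\bbA[k]$, these lattices increase with $[a,b]$, so $\hcalA_\bbA=\sbcup_{a\le b}\hcalA_\bbA[a,b]$ inherits $\Qh\tens_\bbA\hcalA_\bbA\isoto\hcalA$. For \eqref{eq: hatA z characterization} one decomposes a given $x\in\hcalA$ into its finitely many homogeneous components for \eqref{eq: hcalA decomposition}; since $\pair{\ ,\ }$ is block–diagonal for this grading, pairing $x$ into $\bbA$ with $\oprod_{m}\psi_m(\Uzgm)$ amounts to pairing each component into $\bbA$ with the matching component of $\oprod_m\psi_m(\Uzgm)$, which is governed by the finite–interval identity just proved.

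\emph{Subalgebra property, $c$–invariance, and the main obstacle.} It remains to show that $\hcalA_\bbA$ is closed under multiplication, for which — by Theorem~\ref{thm: existence} — it suffices that each $\hcalA_\bbA[a,b]$ be a subalgebra. Factors from copies $k,k'$ with $|k-k'|\ge2$ reorder freely: in the $q$–boson relations \eqref{it: def of hA (b)} there is then no correction term, and the scattering factor $q^{\pm(\al_i,\al_j)}$ lies in $\bbA$. So everything reduces to the two–step inclusion $\hcalA_\bbA[m]\cdot\hcalA_\bbA[m+1]\subseteq\hcalA_\bbA[m,m+1]$. To prove it I would write an element of $\hcalA_\bbA[m+1]$ as a $\bbA$–combination of terms $\varphi_{m+1}(\langle i\rangle)^{\,t}\,\varphi_{m+1}(y)$ with $y\in\Aan[i]$ (Corollary~\ref{cor:iseries}), and move the factors $\varphi_{m+1}(\langle i\rangle)=q_i^{1/2}f_{i,m+1}$ to the left past $\hcalA_\bbA[m]$ via $x f_{i,m+1}=\rmE_{i,m}(x)+q^{-(\al_{i,m},\wt x)}f_{i,m+1}x$: the scattering factors lie in $\bbA$, and $\rmE_{i,m}$ — being under $\varphi_m$ essentially the operator $e'_i$ on $\Aqn$, which stabilizes $\Aan$ (Proposition~\ref{prop:iseries}) — preserves $\hcalA_\bbA[m]$ up to a power of $q_i$, which combines with the $q_i^{1/2}$ in $\varphi_{m+1}(\langle i\rangle)$ to keep the computation inside $\hcalA_\bbA$; an induction on the weight, iterating the decomposition of Corollary~\ref{cor:iseries} over $i\in I$, then closes the argument. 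The $c$–invariance of $\hcalA_\bbA$ follows from that of each $\hcalA_\bbA[k]$, from $c(xy)=q^{(\wt x,\wt y)}c(y)c(x)$ with $q^{(\wt x,\wt y)}\in\bbA$, from the stability of $\hcalA[a,b]$ under $c$ (as $\overline{f_{i,p}}=f_{i,p}$), and from the subalgebra property. I expect the genuine obstacle to be exactly the two–step inclusion: one must keep the various normalization constants — the factors $q^{1/2}$ from the $\varphi_\bullet$, the $q$–integers from the divided powers of $\Uzgm$, and the factors $\zeta_i$ — under control so that no denominators survive the straightening. Equivalently, this is the integrality of multiplication in the two–step $q$–boson algebra $\hcalA[m,m+1]$, which could alternatively be extracted from the structure of the $q$–deformed boson algebra of \cite{K91}.
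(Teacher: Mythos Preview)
The present paper does not prove this proposition; it is simply quoted from \cite[Proposition~6.2]{KKOP24}. So there is no in-paper proof to compare against, and I comment on your argument on its own terms.

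Your handling of parts~(ii) and~(iii), and of the freeness and base-change statements in~(i), is sound: the single-copy facts transport correctly along $\varphi_m$, the verification that $\varphi_m$ intertwines $\sfc$ and $c$ is correct, and the globalization via Proposition~\ref{prop: pair} and Theorem~\ref{thm: existence} works as you describe. You are also right that the whole proof hinges on the two-step inclusion $\hcalA_\bbA[m]\cdot\hcalA_\bbA[m+1]\subset\hcalA_\bbA[m,m+1]$, and that~(iii) needs this as input.

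The gap is in your sketch for that inclusion. Your inductive scheme --- apply Corollary~\ref{cor:iseries} for one index $i$, straighten the resulting powers of $\varphi_{m+1}(\langle i\rangle)$, then ``iterate over $i\in I$ by induction on the weight'' --- does not terminate as stated: the $t=0$ term $y_0\in\Aan[i]$ in the decomposition $y=\sum_t\langle i^t\rangle y_t$ has the \emph{same} weight as $y$, and applying Corollary~\ref{cor:iseries} to $y_0$ for a different index $j$ destroys the condition $e'_i(\cdot)=0$, so you never reach $\bigcap_i\Aan[i]=\bbA$. One cannot sidestep this by writing elements of $\Aan$ as $\bbA$-polynomials in the $\langle j\rangle$'s either: already in type $A_2$ one has $\langle 12\rangle=(1-q^{2})^{-1}\bigl(\langle 1\rangle\langle 2\rangle-q\,\langle 2\rangle\langle 1\rangle\bigr)$, so $\Aan$ strictly contains the $\bbA$-subalgebra generated by the $\langle j\rangle$. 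Thus straightening general elements of $\hcalA_\bbA[m+1]$ past $\hcalA_\bbA[m]$ cannot be reduced to straightening the generators alone, and your phrase ``preserves $\hcalA_\bbA[m]$ up to a power of $q_i$'' is precisely where the argument is left unfinished.

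What the argument actually needs is the lattice-level statement that the operator $x\mapsto q_i^{1/2}\rmE_{i,m}(x)=[x,\varphi_{m+1}(\langle i\rangle)]_q$ preserves $\hcalA_\bbA[m]$ as a whole (not just monomials). This is most cleanly seen by duality: using Theorem~\ref{thm: hAform}\,(c) and Proposition~\ref{prop: pair}, the adjoint of $\rmE_{i,m}$ on $\hcalA[m]$ with respect to $\pair{\ ,\ }$ is, up to an integral power of $q$, left multiplication by $f_i$ on $\Uzgm$ (via $\psi_m$), which does preserve $\Uzgm$. Feeding this back through the pairing characterization~\eqref{eq: hatA z characterization} --- using that $\psi_{m+1}(\Uzgm)$ is \emph{spanned} over $\bbA$ by ordered products of the $\calF_{j,m+1}^{(n)}$, unlike the situation for $\Aan$ and the $\langle j\rangle$ --- is how one closes the two-step inclusion. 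In short, the right induction runs on the dual side (monomials in divided powers $f_j^{(n)}$, which do span $\Uzgm$), not on the $\Aan$ side.
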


We define  the $\Z[q]$-lattices
\begin{align*}
\hAzq[k]  
\seteq \vph_k\left(\Lup\big(\Azn\big)\right),  \quad 
\hAzq[a,b]  \seteq  \oprod_{k \in [a,b]} \hAzq[k]
\end{align*}
and  
\begin{align*}
\LuphA & \seteq  \sbcup_{a \le b}  \hAzq[a,b]. 
\end{align*}
Note that they are not closed by multiplication. 

For any $\bfb = (b_k)_{k \in\Z} \in \hB(\infty)$, we set
\begin{align} \label{eq: rmP}
\rmP(\bfb) \seteq \oprod_{k \in \Z} \vph_k(\Gup(b_k)) \in \LuphA\,, 
\end{align}
which forms a $\Z[q]$-basis of $\LuphA$. 

We regard $\hB(\infty)$ as a $\Z$-basis of $\LuphA/q \LuphA$ by 
$$
\bfb \equiv \rmP(\bfb) \qmodLuphA.
$$

For   $\bfb =(b_k)_{k\in \Z} ,\bfb' = (b'_k)_{k\in \Z} \in \hB(\infty)$,   we define  
\begin{align} \label{Eq: order on hB}
  \bfb\preceq\bfb' \quad \text{  if  $\dVert{\wt(b_k)}\le\dVert{\wt(b'_k)}$ for any $k \in \Z$.} 
\end{align}
Note that $\preceq$ in \eqref{Eq: order on hB} is a preorder on $\hB(\infty)$.

Now let us recall one of the main theorems in~\cite{KKOP24} developing the global bases of $\hcalA$. 

\begin{theorem}[{\cite[Theorem 6.6]{KKOP24}}]\label{Thm: global basis} 
\hfill 
\bnum
\item \label{it: global (i)}
For each $\bfb = (b_k)_{k\in\Z}  \in\hB(\infty)$, there exists a unique
$\rmG(\bfb)\in \LuphA$ such that
\begin{align}
\rmG(\bfb)-\rmP(\bfb) &\in \displaystyle\sum_{\bfb'\prec\bfb}q\Z[q]\rmP(\bfb'),  \label{eq: G and P} \\
c(\rmG(\bfb))& =\rmG(\bfb). \label{eq: G and c}
\end{align}
\item \label{it: global (iii)} For each $\bfb = (b_k)_{k\in\Z}  \in\hB(\infty)$,
  $\rmG(\bfb)$
  is a unique element $x \in \LuphA $
  such that
$$c(x)=x\qtq \bfb \equiv x \qmodLuphA.$$ 
\item The set $\{ G(\bfb) \mid\bfb\in \hBi \}$ forms a $\Z[q]$-basis of $\LuphA$, and a $\Z$-basis of $\LuphA \cap c\big(\LuphA\big)$.
 \item  
 For any $\bfb \in\hB(\infty)$, we have 
\begin{align} \label{eq: rmP rmG}
 \rmP(\bfb) = \rmG(\bfb) + \sum_{\bfb'  \prec \bfb}  f_{\bfb,\bfb'}(q) \rmG(\bfb')\qquad \text{for some $f_{\bfb,\bfb'}(q) \in q \Z[q]$.}    
\end{align}
\ee
\end{theorem}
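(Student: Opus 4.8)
The plan is to derive the statement from the standard construction of a global basis via \emph{Lusztig's lemma} (\cite[\S24.2]{LusztigBook}; cf.\ \cite{K91,K95}), after checking its abstract hypotheses in the present setting. Three inputs are needed. First, $\LuphA$ is a free $\Z[q]$-module with basis $\set{\rmP(\bfb)}{\bfb\in\hBi}$; this is already recorded just above the statement, and follows from Theorem~\ref{thm: existence}(2) (which identifies $\hAzq[a,b]$ with $\bigotimes_{k\in[a,b]}\hAzq[k]$) together with the fact that $\set{\vph_k(\Gup(b))}{b\in\Bi}$ is a $\Z[q]$-basis of $\hAzq[k]$. Second, $\preceq$ is a well-founded preorder on $\hBi$ with finite lower sets: along any $\prec$-descending chain the element $\sum_{k}\dVert{\wt(b_k)}\in\rl^+$ strictly decreases (so the chain is finite since $\het$ drops), and for a fixed $\bfb$ only finitely many $k$ have $b_k\ne\one$, there are only finitely many weights $\dVert{\,\cdot\,}$-below a given one, and only finitely many elements of $\Bi$ of each weight. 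Third --- the substantial point --- the triangularity of $c$: $c(\rmP(\bfb))\in\rmP(\bfb)+\sum_{\bfb'\prec\bfb}\bbA\,\rmP(\bfb')$, with leading coefficient $1$ (the other coefficients are Laurent polynomials: unlike $\hcalA_\bbA$, which is $c$-stable by Proposition~\ref{prop: hcalA integral form}(3), the lattice $\LuphA$ itself is not).

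To prove the triangularity I would first check $c\circ\vph_k=\vph_k\circ\sfc$ on $\hcalA[k]$. Both $c$ and $\sfc$ are $\bbA$-semilinear anti-automorphisms carrying the same multiplicative twist $(x,y)\mapsto q^{(\wt x,\wt y)}$ (for $c$ this rests on the identity $N(\be+\be')=N(\be)+N(\be')+(\be,\be')$, and the twist exponent is unchanged under $\vph_k$ because $\wt(\vph_k(x))=(-1)^k\wt(x)$); such a map is determined by its values on a set of algebra generators, and a one-line computation gives $c(\vph_k(\ang i))=q_i^{1/2}f_{i,k}=\vph_k(\sfc(\ang i))$, the factor $q^{N(\wt x)}$ in the definition of $c$ absorbing the $q_i^{-1/2}$. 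Since each $\Gup(b_k)$ is $\sfc$-invariant, every factor of $\rmP(\bfb)=\oprod_{k}\vph_k(\Gup(b_k))$ is fixed by $c$, so applying the anti-automorphism $c$ reverses the ordered product: $c(\rmP(\bfb))=q^{D}\rprod_{k}\vph_k(\Gup(b_k))$ for an explicit integer $D$. Re-sorting $\rprod_k\vph_k(\Gup(b_k))$ back into decreasing order using only the $q$-boson relations~\eqref{it: def of hA (b)}, the term that never invokes the $\delta_{(j,p),(i,m+1)}(1-q_i^2)$ summand is $q^{E}\rmP(\bfb)$ with the factors unchanged, whereas every other term has had a pair $f_{i,m},f_{i,m+1}$ contracted and therefore has strictly smaller $\dVert{\wt}$ at two of the copies; expanding such a term's factors in the bases $\set{\vph_k(\Gup(b'))}{b'}$ yields $\bbA$-multiples of $\rmP(\bfb')$ with $\bfb'\prec\bfb$. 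That the resulting leading coefficient $q^{D+E}$ equals $1$ is exactly what the normalization $N(\wt)$ in the definition of $c$ is set up to guarantee; it can be verified by a direct bookkeeping of the $q$-powers (it already suffices to treat products of two tensor factors, or to run the whole argument first inside each finite slice $\hcalA[a,b]$ and then pass to the limit). I expect this re-sorting bookkeeping to be the main obstacle; everything else is formal.

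With the three inputs in place, part~(i) follows by downward induction along $\preceq$ (legitimate since lower sets are finite): one seeks $\rmG(\bfb)=\rmP(\bfb)+\sum_{\bfb'\prec\bfb}\gamma_{\bfb'}\rmP(\bfb')$ with $\gamma_{\bfb'}\in q\Z[q]$; substituting the already-constructed $\rmG(\bfb')$ and imposing $c(\rmG(\bfb))=\rmG(\bfb)$ reduces, coefficient by coefficient and using the triangularity of $c$, to solving $\gamma_{\bfb'}-\overline{\gamma_{\bfb'}}=(\text{a known bar-antisymmetric Laurent polynomial})$, which has a unique solution in $q\Z[q]$ --- giving existence and uniqueness at once. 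Part~(iv) is the inversion of the resulting unitriangular matrix: writing the transition as $\rmG=(\id+N)\,\rmP$ with $N$ strictly lower along $\preceq$ and with entries in $q\Z[q]$, the inverse $\sum_{n\ge0}(-N)^{n}$ is a finite sum at each entry (finiteness of lower sets) whose off-diagonal entries again lie in $q\Z[q]$; in particular both transition matrices are unitriangular over $\Z[q]$, so $\set{\rmG(\bfb)}{\bfb\in\hBi}$ is a $\Z[q]$-basis of $\LuphA$. For part~(ii): if $x\in\LuphA$ is $c$-invariant with $\bfb\equiv x\qmodLuphA$, then $x-\rmG(\bfb)$ is a $c$-invariant element of $q\LuphA=\bigoplus_{\bfb'}q\Z[q]\,\rmG(\bfb')$, and $c$-invariance forces each of its coefficients into $q\Z[q]\cap q^{-1}\Z[q^{-1}]=0$, so $x=\rmG(\bfb)$. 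Finally, the $\Z$-basis assertion in part~(iii): each $\rmG(\bfb)$ lies in $\LuphA\cap c(\LuphA)$ by construction, and conversely if $x=\sum_{\bfb}a_{\bfb}(q)\rmG(\bfb)\in\LuphA$ (so $a_{\bfb}\in\Z[q]$) has $c(x)=\sum_{\bfb}\overline{a_{\bfb}}\,\rmG(\bfb)\in\LuphA$, then $\overline{a_{\bfb}}\in\Z[q]$, whence $a_{\bfb}\in\Z[q]\cap\Z[q^{-1}]=\Z$; thus $\set{\rmG(\bfb)}{\bfb\in\hBi}$ is a $\Z$-basis of $\LuphA\cap c(\LuphA)$.
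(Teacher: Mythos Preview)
This theorem is not proved in the present paper: it is quoted verbatim from \cite[Theorem 6.6]{KKOP24} as background, with no argument given here. So there is no ``paper's own proof'' to compare against. That said, your outline is exactly the standard Lusztig--Kashiwara construction one would expect \cite{KKOP24} to use: verify that $\{\rmP(\bfb)\}$ is a $\Z[q]$-basis of $\hAzq$, that the preorder has finite principal lower sets, and that $c$ is unitriangular on $\{\rmP(\bfb)\}$ with Laurent-polynomial off-diagonal entries, and then invoke Lusztig's lemma. Your check that each factor $\vph_k(\Gup(b_k))$ is $c$-invariant (via $c\circ\vph_k=\vph_k\circ\sfc$) is correct, and so is the computation showing that the leading $q$-power cancels: swapping $x_s$ past $x_t$ for $s<t$ contributes exactly $q^{-(\wt x_s,\wt x_t)}$ in the leading term, matching the $q^{+\sum_{s<t}(\wt x_s,\wt x_t)}$ produced by $c$. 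The re-sorting step you flag as the ``main obstacle'' does work as you describe --- every use of the $\delta$-term in \eqref{it: def of hA (b)} lowers the weight norm at two adjacent slots, and since $\hAz$ is $c$-stable the correction terms expand in $\{\rmP(\bfb')\}$ over $\bbA$ with $\bfb'\prec\bfb$. Parts (ii)--(iv) then follow formally as you indicate.
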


We call $$\bfG  \seteq \{\rmG(\bfb)   \mid   \bfb\in\hBi \} \text{ the \emph{global basis} of $\hcalA$}.$$ 
We also define 
  $$
  \trmG(\bfb) \seteq \sigma( \rmG(\bfb)) \qquad \text{ for any $\bfb \in \hBi$,}
  $$
  and call $  \tbfG \seteq \{\trmG(\bfb)   \mid  \bfb\in\hBi \}$ the \emph{normalized global basis} of $\hcalA$.
Note that $\bfG$ is $c$-invariant while $\tbfG$ is bar-invariant. 

\smallskip

\begin{proposition}[{\cite[Proposition 6.9]{KKOP24}}]\label{Prop: properties of Gb} 
  The $($normalized$)$ global basis has the following properties.
\bnum
\item For $ \bfb, \bfb'\in\hBi$, we have
$$ \pair{\rmG(\bfb),\rmG(\bfb')} = \hAform{ \trmG(\bfb),\trmG(\bfb') } \in\Z[[q]]\cap\Q(q).$$
\item For $\bfb, \bfb'\in\hB(\infty)$, we have 
  $$\pair{ \rmG(\bfb),\rmG(\bfb')} \big\vert_{q=0}=\hAform{ \trmG(\bfb),\trmG(\bfb')} \big\vert_{q=0}=\delta_{\bfb, \bfb'}.$$\label{item:orth}
\item \label{it: pesudo invaraint}  We have
\begin{equation} \label{eq: pesudo invaraint G}
\begin{aligned}
& \{ x \in \hcalA_\bbA \mid c(x) = x, \ \pair{x,x} \in 1 + q\Q[[q]] \}   \allowdisplaybreaks\\
& \hspace{8ex} = \{ \rmG(\bfb) \ | \ \bfb \in \hBi\} \cup \{ -\rmG(\bfb) \ | \ \bfb \in \hBi\}.
\end{aligned}
\end{equation}
  \item 
    We have
    $$\LuphA=\set{x\in\hAz}{\pair{x,x}\in\Q[[q]]}.$$
    \label{item:Lup}
\ee 
\end{proposition}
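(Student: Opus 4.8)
The plan is to establish the four assertions in the order (i), (ii), (iv), (iii). The organizing principle is that both bilinear forms \emph{factorize} across the tensor slots $\hcalA[k]$ --- by Theorem~\ref{thm: hAform}\,(e) and Proposition~\ref{prop: pair}\,(ii) --- and that each isomorphism $\vph_k\colon\Aqhn\isoto\hcalA[k]$ intertwines $\aform{\ ,\ }$ with $\pair{\ ,\ }$ (Proposition~\ref{prop: pair}\,(iii)) while carrying $\LupA$ onto $\hAzq[k]$ and $\Azn$ onto $\hcalA_\bbA[k]$. Hence every statement about $\rmP(\bfb)=\oprod_k\vph_k(\Gup(b_k))$ reduces, slot by slot, to a known statement about the upper global basis $\bGup$ of $\Azn$, and the unitriangular transition~\eqref{eq: G and P} between $\rmP$ and $\rmG$ transports it to the global basis of $\hcalA$. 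The only step needing a genuinely new argument is the inclusion ``$\supseteq$'' of (iv).

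\emph{Proof of (i) and (ii).} The equality $\hAform{\trmG(\bfb),\trmG(\bfb')}=\pair{\rmG(\bfb),\rmG(\bfb')}$ is just the definition~\eqref{eq: pair form} of $\pair{\ ,\ }$ applied to $\trmG=\sigma\circ\rmG$. By Proposition~\ref{prop: pair}\,(ii)--(iii), $\pair{\rmP(\bfb),\rmP(\bfb')}=\prod_k\aform{\Gup(b_k),\Gup(b'_k)}$, a finite product (almost all factors are $\aform{1,1}=1$); each factor lies in $\Z[[q]]\cap\Q(q)$ and specializes at $q=0$ to $\delta_{b_k,b'_k}$, because $\bGup$ is an $\bbA$-basis of $\Azn$ spanning the lattice $\LupA$, on which $\aform{\ ,\ }$ is $\Z[[q]]$-valued and orthonormal modulo $q$ (Subsection~\ref{subsec: crystal upper global}). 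Thus $\pair{\rmP(\bfb),\rmP(\bfb')}\in\Z[[q]]\cap\Q(q)$ has $q=0$ value $\delta_{\bfb,\bfb'}$; expanding $\rmG(\bfb)$ via~\eqref{eq: G and P} as a finite $\Z[q]$-combination of the $\rmP(\bfc)$ with $\bfc\preceq\bfb$ then gives (i). Since by (i) the form $\pair{\ ,\ }$ descends to a $\Z$-bilinear form on $\LuphA/q\LuphA$, and $\rmG(\bfb)\equiv\rmP(\bfb)\qmodLuphA$ by Theorem~\ref{Thm: global basis}, we obtain $\pair{\rmG(\bfb),\rmG(\bfb')}|_{q=0}=\pair{\rmP(\bfb),\rmP(\bfb')}|_{q=0}=\delta_{\bfb,\bfb'}$, which is (ii).

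\emph{Proof of (iv).} ``$\subseteq$'': from $\hAzq[k]=\vph_k(\LupA)\subseteq\vph_k(\Azn)=\hcalA_\bbA[k]$ we get $\LuphA\subseteq\hAz$, and for $x=\sum_\bfb a_\bfb(q)\rmG(\bfb)$ with $a_\bfb\in\Z[q]$ one has $\pair{x,x}\in\Z[[q]]$ by (i). ``$\supseteq$'': take $x\in\hAz$ with $\pair{x,x}\in\Q[[q]]$ and pick $a\le b$ with $x\in\hcalA_\bbA[a,b]$. Applying Theorem~\ref{thm: existence}\,(ii) slot by slot together with the fact that $\bGup$ is an $\bbA$-basis of $\Azn$, one sees that $\{\rmP(\bfb)\mid\operatorname{supp}\bfb\subseteq[a,b]\}$ is an $\bbA$-basis of $\hcalA_\bbA[a,b]$; write $x=\sum_\bfb c_\bfb(q)\rmP(\bfb)$ as a finite sum with $c_\bfb\in\bbA=\Z[q^{\pm1}]$. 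Suppose some $c_\bfb$ has a term in a negative degree and let $d<0$ be the least degree occurring in any $c_\bfb$. In $\pair{x,x}=\sum_{\bfb,\bfb'}c_\bfb c_{\bfb'}\pair{\rmP(\bfb),\rmP(\bfb')}$ each summand has $q$-order $\ge 2d$ (as $\pair{\rmP(\bfb),\rmP(\bfb')}$ has non-negative order, being $0$ unless $\wt\bfb=\wt\bfb'$), and a summand contributes to the coefficient of $q^{2d}$ only when $\operatorname{ord}_q c_\bfb=\operatorname{ord}_q c_{\bfb'}=d$ and $\pair{\rmP(\bfb),\rmP(\bfb')}|_{q=0}\ne0$; since $\pair{\rmP(\bfb),\rmP(\bfb')}|_{q=0}=\delta_{\bfb,\bfb'}$ among equal-weight indices, this forces $\bfb=\bfb'$, so the coefficient of $q^{2d}$ in $\pair{x,x}$ equals $\sum_{\operatorname{ord}_q c_\bfb=d}(\text{leading coefficient of }c_\bfb)^2>0$, contradicting $\pair{x,x}\in\Q[[q]]$. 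Hence every $c_\bfb\in\Z[q]$, i.e.\ $x\in\LuphA$.

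\emph{Proof of (iii).} ``$\supseteq$'': $\rmG(\bfb)\in\hAz$, $c(\rmG(\bfb))=\rmG(\bfb)$ by~\eqref{eq: G and c}, and $\pair{\rmG(\bfb),\rmG(\bfb)}\in1+q\Z[[q]]$ by (i)--(ii); the same holds for $-\rmG(\bfb)$ since $c$ is $\Q$-linear. ``$\subseteq$'': let $x\in\hAz$ satisfy $c(x)=x$ and $\pair{x,x}\in1+q\Q[[q]]$; then $\pair{x,x}\in\Q[[q]]$ forces $x\in\LuphA$ by (iv). Write $x=\sum_\bfb a_\bfb(q)\rmG(\bfb)$ with $a_\bfb\in\Z[q]$. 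Passing to weight components and using that $c$ is graded, that $c(\rmG(\bfb))=\rmG(\bfb)$, and that $c(p(q)y)=p(q^{-1})c(y)$ for homogeneous $y$, we see that each $a_\bfb$ is bar-invariant in $\Z[q]$, hence a constant integer; finally $1=\pair{x,x}|_{q=0}=\sum_\bfb a_\bfb^{\,2}$ by (ii), so exactly one $a_\bfb$ equals $\pm1$ and $x=\pm\rmG(\bfb)$. I expect the crux to be ``$\supseteq$'' of (iv): realizing $\hcalA_\bbA[a,b]$ as a free $\bbA$-module on the monomials $\rmP(\bfb)$ requires marrying the tensor factorization of Theorem~\ref{thm: existence}\,(ii) with the $\bbA$-integral structure of $\Azn$, and the lowest-degree argument rests on the Gram matrix of $\{\rmP(\bfb)\}$ being congruent to the identity modulo $q$ (which is the substance of (i)--(ii)); once this is in hand, (iii) is purely formal.
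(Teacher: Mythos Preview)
Your proof is correct. In the paper the proposition is quoted from \cite{KKOP24} and only item~(iv) is justified, by the one-line remark that it follows from $\{\rmG(\bfb)\}$ being simultaneously a $\Zq$-basis of $\hAz$ and a $\Z[q]$-basis of $\LuphA$, together with~\eqref{item:orth}. Your argument for (iv) is the same lowest-degree computation, but run with the standard basis $\{\rmP(\bfb)\}$ instead of $\{\rmG(\bfb)\}$; the two are interchangeable here since the transition matrix is unitriangular over $\Z[q]$ by~\eqref{eq: G and P}, so both families are $\Zq$-bases of $\hAz$ and $\Z[q]$-bases of $\LuphA$ with Gram matrix congruent to the identity modulo~$q$. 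Your version has the mild advantage of not invoking Theorem~\ref{Thm: global basis} for the $\Zq$-basis property (the $\rmP$-basis spans $\hAz$ directly by the definition $\hcalA_\bbA[a,b]=\oprod_k\vph_k(\Azn)$), whereas the paper's version avoids reproving the $\Z[[q]]$-valuedness of $\pair{\ ,\ }$ on $\rmP$-vectors by appealing to the already-established~(i). One small remark: in your justification that $\{\rmP(\bfb)\}$ is an $\bbA$-basis of $\hcalA_\bbA[a,b]$, the relevant input is the \emph{definition} of $\hcalA_\bbA[a,b]$ in \S\ref{subsec: global hatA} rather than Theorem~\ref{thm: existence}\,(ii), which concerns the $\Qh$-level (though the latter guarantees injectivity of the multiplication map).
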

Note that \eqref{item:Lup}
follows from
the fact that
$\stt{\rmG(\bfb)}_{\bfb\in\hBi}$ is a $\Zq$-basis of $\hAz$,
a $\Z[q]$-basis of $\LuphA$ and \eqref{item:orth}.

\begin{proposition}  [{\cite[Proposition 6.8]{KKOP24}}] \label{prop: star on crystal}
For any $\bfb \in \hB(\infty)$, we have 
$$
\rmG(\bfb)^\star = \rmG(\bfb^\star) \qtq \trmG(\bfb)^\star = \trmG(\bfb^\star).
$$	
\end{proposition}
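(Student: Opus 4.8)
The plan is to pin down $\rmG(\bfb)^\star$ by the uniqueness characterization of the global basis in Theorem~\ref{Thm: global basis}: since $\rmG(\bfb)$ is the unique element of $\LuphA$ satisfying \eqref{eq: G and P} and \eqref{eq: G and c}, it suffices to verify that $\rmG(\bfb)^\star$ lies in $\LuphA$, is fixed by $c$, and satisfies $\rmG(\bfb)^\star-\rmP(\bfb^\star)\in\sum_{\bfc\prec\bfb^\star}q\Z[q]\,\rmP(\bfc)$; then $\rmG(\bfb)^\star=\rmG(\bfb^\star)$, and the normalized statement follows formally.

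First I would record the elementary compatibilities of $\star$ (see \eqref{eq: autos}). Since $\wt(f_{i,p}^\star)=\wt(f_{i,-p})=-(-1)^p\al_i=\wt(f_{i,p})$, the anti-automorphism $\star$ preserves weights; as the bar-involution $\overline{\phantom{a}}$ also preserves weights and $\overline{\phantom{a}}\circ\star=\star\circ\overline{\phantom{a}}$ (both are the $\Q$-algebra automorphism of $\hcalA$ sending $f_{i,p}\mapsto f_{i,-p}$ and $q^{1/2}\mapsto q^{-1/2}$, so they agree on generators), we get $c(x^\star)=q^{N(\wt x)}\overline{x^\star}=\bl q^{N(\wt x)}\overline x\br^\star=c(x)^\star$ for homogeneous $x$ (using the $\Qh$-linearity of $\star$ and $q^{N(\wt x)}\in\Qh$); in particular $\star$ sends $c$-invariant elements to $c$-invariant elements. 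Second, let $*_A$ be the $\Q(q)$-algebra anti-involution of $\Aqn$ with $*_A(\Ang i)=\Ang i$ (equivalently $*_A=\iota\circ *\circ\iota^{-1}$ with $*$ as in \eqref{eq: star involution}); then $\varphi_k(x)^\star=\varphi_{-k}(x^{*_A})$ for every $x\in\Aqn$, because both sides are $\Q(q)$-linear anti-homomorphisms $\Aqn\to\hcalA[-k]$ which agree on the algebra generators, since $\varphi_k(\Ang i)^\star=(q_i^{1/2}f_{i,k})^\star=q_i^{1/2}f_{i,-k}=\varphi_{-k}(\Ang i)=\varphi_{-k}(\Ang i^{*_A})$.

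The substantive input is the standard fact that $*_A$ preserves the upper global basis $\bGup$ of $\Azn$, acting by $\Gup(b)\mapsto\Gup(b^*)$; this is transported via $\iota$ from Kashiwara's $*$-invariance of the (dual) canonical basis of $\Uqgm$ and the crystal involution on $B(\infty)$ (cf.\ \cite{K91, Kimura12}). Granting this, I would apply $\star$ to $\rmP(\bfb)=\oprod_{k\in\Z}\varphi_k(\Gup(b_k))$ (see \eqref{eq: rmP}): the anti-automorphism reverses the product, and combining with $\varphi_k(x)^\star=\varphi_{-k}(x^{*_A})$ and a reindexing $k\mapsto -k$ gives $\rmP(\bfb)^\star=\oprod_{m\in\Z}\varphi_m\bl\Gup(b_{-m})^{*_A}\br=\oprod_{m\in\Z}\varphi_m\bl\Gup((b_{-m})^*)\br=\rmP(\bfb^\star)$, since $\bfb^\star=\bl(b_{-m})^*\br_{m\in\Z}$. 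In particular $\star$ permutes the $\Z[q]$-basis $\set{\rmP(\bfc)}{\bfc\in\hBi}$ of $\LuphA$, hence preserves $\LuphA$ (and, likewise, preserves $\hAz$).

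Finally I would assemble these facts. Applying $\star$ to \eqref{eq: G and P} and using $\rmP(\bfb')^\star=\rmP(\bfb'^\star)$ gives $\rmG(\bfb)^\star-\rmP(\bfb^\star)\in\sum_{\bfb'\prec\bfb}q\Z[q]\,\rmP(\bfb'^\star)$. Since $*$ preserves weights on $B(\infty)$, the involution $^\star$ on $\hBi$ satisfies $\dVert{\wt(b'_m)}=\dVert{\wt(b_{-m})}$, so $\bfb'\preceq\bfb\iff\bfb'^\star\preceq\bfb^\star$ (and the same for $\prec$); hence the right-hand side equals $\sum_{\bfc\prec\bfb^\star}q\Z[q]\,\rmP(\bfc)$. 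Together with $\rmG(\bfb)^\star\in\LuphA$ and $c(\rmG(\bfb)^\star)=\rmG(\bfb)^\star$, the uniqueness in Theorem~\ref{Thm: global basis} forces $\rmG(\bfb)^\star=\rmG(\bfb^\star)$. For the normalized basis, $\wt\circ\star=\wt$ and the $\Qh$-linearity of $\star$ give $\trmG(\bfb)^\star=\sigma(\rmG(\bfb))^\star=q^{-N(\wt\rmG(\bfb))/2}\rmG(\bfb)^\star=\sigma(\rmG(\bfb^\star))=\trmG(\bfb^\star)$. The only non-formal step is the $*_A$-invariance of $\bGup$ with the correct normalization (one must make sure the normalization of $\Gup$ on $\Azn$ used here matches the one for which $\iota$ carries dual canonical bases); the rest is bookkeeping with anti-automorphisms, weights, and the lattice/uniqueness characterization in Theorem~\ref{Thm: global basis}.
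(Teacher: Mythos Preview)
Your proof is correct. The paper does not give its own proof of this proposition; it is simply quoted from \cite[Proposition 6.8]{KKOP24}, so there is no in-paper argument to compare against. That said, your approach---checking that $\star$ commutes with $c$, that $\rmP(\bfb)^\star=\rmP(\bfb^\star)$ via the classical $*$-invariance of $\bGup$, and that the preorder $\preceq$ is preserved by $^\star$, then invoking the uniqueness in Theorem~\ref{Thm: global basis}---is exactly the natural route and matches the spirit of how such compatibilities are established in \cite{KKOP24}. The one point you flagged yourself (that $*_A$ on $\Azn$ sends $\Gup(b)\mapsto\Gup(b^*)$) is indeed the only substantive external input, and it is standard.
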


Note that 
\begin{align} \label{eq: ocalD on crystal}
\ocalD \rmG( \bfb ) = \rmG( \ocalD(\bfb) )    
\end{align}
by the construction of the global basis.

\medskip
\Def\label{def:cble}
Let $\bfb_1,\bfb_2 \in\hBi$.
If there exists $\bfb\in\hBi$ such that
$$\rmG(\bfb_1)\rmG(\bfb_2)\equiv\rmG(\bfb) \qmodLuphA,$$
then we say that $\bfb_1$ and $\bfb_2$ are \emph{\cble} 
or $\rmG(\bfb_1)$ and $\rmG(\bfb_2)$ are  \emph{\cble}, and
we denote $\bfb$ by 
\begin{align} \label{eq: star between crystal}
    \bfb \seteq \bfb_1*\bfb_2.
\end{align}
Note that such an element  $\bfb \in \hBi$ is unique if it exists,
\edf

For $m\in\Z$, set
\eq
&&\ba{l}
\hBi_{\ge m}=\set{\bfb =(b_k)_{k\in\Z}\in\hBi}{\text{$b_k=\one$ for $k<m$}},\\
\hBi_{\le m}=\set{\bfb =(b_k)_{k\in\Z}\in\hBi}{\text{$b_k=\one$ for $k> m$}}.
\ea
\eneq
Then
the subalgebra $(\hcalA_\bbA)_{\ge m} \seteq \hcalA_{\ge m} \cap \hcalA_\bbA$ of
$\hcalA_\bbA$ has a $\bbA$-basis 
$$\bfG_{\ge m} \seteq \{ \rmG(\bfb)  \ | \ \bfb\in\hBi_{\ge m} \}.$$
A similar statement holds for $\hcalA_{\le m}$, etc.

\Prop\label{prop:cble}
Let $m\in\Z$. 
\bnum
\item The multiplication induces an isomorphism
  $(\LuphA)_{\ge m}\tens_{\Z[q]} (\LuphA)_{<m}
  \isoto \LuphA$.
\item $\stt{\rmG(\bfb )}_{\bfb\in\hBi_{\ge m}}$
    is a $\Z[q]$-basis of $(\LuphA)_{\ge m}$,
and $\stt{\rmG(\bfb)}_{\bfb\in\hBi_{\le m}}$
    is a $\Z[q]$-basis of $(\LuphA)_{\le m}$.
\item For any $\bfb\in\hBi$ 
there exists a unique pair of $\bfb'\in\hBi_{\ge m}$ and $\bfb''\in\hBi_{<m}$
such that
$$\rmG(\bfb)\equiv\rmG(\bfb')\rmG(\bfb'') \qmodLuphA. $$
\item Conversely, 
  $\bfb'\in\hBi_{\ge m}$ and $\bfb''\in\hBi_{<m}$
 are \cble
  for any $m\in\Z$. 
\ee
\enprop
\Proof
(i)--(iii) are obvious by the definition.
Let us show (iv).
Set $\bfb'=(b'_k)_{k\in\Z}$
and $\bfb''=(b''_k)_{k\in\Z}$.
Then $\bfb'_k=\one$ for $k<m$ and $b''_k=\one$ for $k\ge m$.
Set $\bfb=(b_k)_{k\in\Z}$ with
$b_k=b'_k$ for $k\ge m$ and
$b_k=b''_k$ for $k< m$.
Then we have
\eqn
\rmG(\bfb)&&\equiv \oprod_{k\in\Z} \vph_k(\Gup(b_k))  \qmodLuphA,\\
\rmG(\bfb')&&\equiv \oprod_{k\ge m} \vph_k(\Gup(b_k))  \qmodLuphA,\\
\rmG(\bfb'')&&\equiv \oprod_{k< m} \vph_k(\Gup(b_k))  \qmodLuphA.
\eneqn
Hence we obtain the desired result.
\QED

\section{Braid symmetries on $\hcalA$} \label{sec: braid}
In this section, we develop the braid symmetries on $\hcalA$, which were treated in~\cite{KKOP21B,JLO2} only for finite types. 
Then, we shall investigate how these symmetries act on the bilinear forms and the global bases of $\hcalA$. 

\subsection{Braid group action on $\hcalA$} 

For each $i \in I$ and $m \in \Z$, we set 
$$
\kappa_i \seteq q_i^{-1/2}(1-q_i^2) \qtq \calF_{i,m} \seteq \kappa_i^{-1} f_{i,m} = q_i^{1/2}(1-q_i^2)^{-1} f_{i,m}. 
$$
Note that $\calF_{i,m} = \psi_m(f_i)$ with $\psi_m$ in~\eqref{eq: psi} and 
$$
\pair{\vph_m(\Ang{i}),\calF_{i,m} } = \pair{\vph_m(\Ang{i}),\psi_{m}(f_i) } =1. 
$$

\emph{The goal of this subsection is to prove the following theorem.}

\begin{theorem} \label{thm: T-braid} For each $i \in I$,
  there exists unique $\bfk$-algebra automorphisms
  $\TT_i$ and $\TT_i^\star$ on $\hcalA$ such that
\begin{subequations} \label{eq: Braid group actions}
\begin{gather}
 \quad \ \ \TT_i(f_{j,m})   \seteq \bc 
\qquad \qquad  f_{i,m+1} & \text{ if } j =i, \\[1ex]
\displaystyle\sum_{r+s = -\Ang{h_i,\al_j}} (-q_i)^s \calF_{i,m}^{(r)} f_{j,m} \calF_{i,m}^{(s)}  & \text{ if } j \ne i, 
\ec \label{eq: T_i} \allowdisplaybreaks\\
\quad  \TTiv{i}(f_{j,m})  \seteq \bc 
\qquad \qquad f_{i,m-1} & \text{ if } j =i, \\[1ex]
\displaystyle\sum_{r+s = -\Ang{h_i,\al_j}} (-q_i)^r \calF_{i,m}^{(r)} f_{j,m} \calF_{i,m}^{(s)}  & \text{ if } j \ne i, 
\ec  \label{eq: T_i inverse}
\end{gather} 
\end{subequations}
where $\calF_{i,m}^{(n)} \seteq \calF_{i,m}^{ \hspace{0.2ex} n}/[n]_i!$ for $n \in \Z_{\ge0}$  and $\calF_{i,m}^{(n)}=0$ for $n<0$. 
Moreover, we have 
\bnum
\item $\TT_i \circ \TTiv{i} =\TTiv{i} \circ \TT_i = \ {\rm id}$,
\item $\{ \TT_i \}_{i \in I}$ and $\{ \TT^\star_i \}_{i \in I}$ satisfy the braid relations.     
  \ee
\end{theorem}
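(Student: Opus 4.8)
The plan is to prove the three assertions in turn: well-definedness of $\TT_i$ and $\TTiv{i}$ as $\bfk$-algebra endomorphisms of $\hcalA$, then the identities $\TTiv{i}\circ\TT_i=\TT_i\circ\TTiv{i}=\id$ (which upgrade them to automorphisms), and finally the braid relations. Each of these concerns algebra homomorphisms, so in every case it suffices to work on the generators $f_{j,m}$ and the defining relations \eqref{it: def of hA (a)}--\eqref{it: def of hA (b)}. The computations are organised by the observation that, comparing \eqref{eq: Braid group actions} with \eqref{eq: psi} and \eqref{eq: Lusztig action}, one has $\TT_i(f_{i,m})=f_{i,m+1}$ while $\TT_i(f_{j,m})=\kappa_j\,\psi_m(\sfS_i(f_j))$ for $j\ne i$, where $\psi_m\colon\Uqgm\isoto\hcalA[m]$; in particular $\TT_i(f_{j,m})$ lies in $\hcalA[m,m+1]$, and in $\hcalA[m]$ unless $j=i$, so each identity to be verified takes place inside a level-window $\hcalA[a,b]$ with $b-a$ bounded. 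Moreover $\TT_i$ is $s_i$-twisted on weights, i.e.\ $\wt(\TT_i(x))=s_i(\wt(x))$.

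First I would show that $\TT_i$ respects the defining relations, splitting them according to how they meet $i$. The $q$-Serre relation \eqref{it: def of hA (a)} at a level $p$ whose two indices both differ from $i$ lives in $\hcalA[p]\cong\Uqhm$; transported through the algebra isomorphism $\psi_p$ it becomes the $\sfS_i$-image of a $q$-Serre relation in $\Uqgm$, which holds because $\sfS_i$ is an algebra homomorphism of $U_q(\g)$ and the $q$-Serre relation is invariant under independent rescalings of its two generators. The $q$-boson relations \eqref{it: def of hA (b)} between levels at distance $\ge 3$ reduce to $q$-commutations determined solely by weights, hence are preserved since $(\cdot,\cdot)$ is Weyl-invariant. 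All remaining identities involve $f_{i,p\pm1}$ interacting with $\hcalA[p]$ — the level-$p$ $q$-Serre relation with one index equal to $i$, and the $q$-boson relations between consecutive (or next-to-consecutive) levels. For these I would use the operators $\rmE_{i,p}$ and $\Es_{i,p}$ of \eqref{eq: Ei Esi}: their commutation rules (stated just after \eqref{eq: Ei Esi}) let one push $f_{i,p+1}$ (resp.\ $f_{i,p-1}$) past any element of $\hcalA[p]$ at the cost of an $\rmE_{i,p}$- (resp.\ $\Es_{i,p}$-) correction, and under $\psi_p$ these operators match, up to the scalars $\kappa_i$ and powers of $q^{1/2}$, the skew-derivations $e'_i$ and $e^*_i$ on $\Uqgm$. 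After this reduction each such identity becomes an identity inside $\hcalA[p]\cong\Uqhm$ and translates into standard facts about $\sfS_i$ on $U_q^-(\g)$: that $\sfS_i$ carries $\Uqgm[i]=\Ker e'_i$ isomorphically onto $\Uqgm[i]^*=\Ker e^*_i$ (\eqref{eq: Uqgmi}), so that the relevant correction terms vanish; the triangular decomposition $\Uqgm=\sum_k f_i^{(k)}\Uqgm[i]=\sum_k\Uqgm[i]^* f_i^{(k)}$ of \eqref{eq:Wxp}; and the commutation rule for $e_i$ with $\Uqgm[i]^*$ together with the $q$-Serre relations. Conjugating by the anti-involution $\star$ (using $\Es_{i,m}=\star\circ\rmE_{i,-m}\circ\star$) gives well-definedness of $\TTiv{i}$ in the same way.

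Next I would check $\TTiv{i}\circ\TT_i=\TT_i\circ\TTiv{i}=\id$ and the braid relations, both of which are equalities of algebra endomorphisms and hence need only be verified on the generators $f_{j,m}$. For $\TTiv{i}\circ\TT_i(f_{j,m})$ the case $j=i$ is immediate ($f_{i,m}\mapsto f_{i,m+1}\mapsto f_{i,m}$), and for $j\ne i$ one moves everything into $\hcalA[m]$ using the $\rmE,\Es$ reductions on the levels $m-1,m,m+1$ and lands on the identity $\sfS^*_i\circ\sfS_i=\id$ from \eqref{eq: Lusztig action}. For the braid relations, fix $i\ne j$ with $m_{i,j}<\infty$ (there is nothing to prove when $m_{i,j}=\infty$); the composites $\underbrace{\TT_i\TT_j\cdots}_{m_{i,j}}$ and $\underbrace{\TT_j\TT_i\cdots}_{m_{i,j}}$ carry each $f_{k,m}$ into a fixed window $\hcalA[m-c,m+c]$, and I would reduce the required equality — again via the $\rmE,\Es$ passage from the relevant copies of $\Uqhm$ to $\Uqgm$, while tracking the level shifts produced by the $f_{l,p}\mapsto f_{l,p+1}$ part of each $\TT_l$ — to the braid relation $\underbrace{\sfS_i\sfS_j\cdots}_{m_{i,j}}=\underbrace{\sfS_j\sfS_i\cdots}_{m_{i,j}}$ in $U_q(\g)$, which is known.

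The hard part will be precisely this level mixing. A single $\TT_i$ is transparent on any one copy $\hcalA[p]$, where it is (a rescaling of) $\sfS_i$, but both the $q$-boson relations and the braid composites genuinely couple two or three consecutive copies, so the delicate work is the bookkeeping that converts a multi-level identity in $\hcalA$ into a single identity in $U_q(\g)$ — and, within that, the careful matching of the normalizing scalars $\kappa_i$ and the half-integral powers of $q$, which is exactly what forces the rescaled generators $\calF_{i,m}$ into the formulas \eqref{eq: Braid group actions}.
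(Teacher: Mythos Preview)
Your high-level strategy --- reduce everything to known properties of Lusztig's $\sfS_i$ on $U_q(\g)$ --- is the same as the paper's, and several pieces of your plan match what the paper actually does (the $q$-boson relations at distance $\ge 3$ by weight considerations; the inverse relation via $\sfS_i^*\circ\sfS_i=\id$; the anti-involution $\star$ transporting $\TT_i$ to $\TTiv{i}$). But your proposal is missing the main organising device of the paper's proof, and without it the ``bookkeeping'' you flag at the end is not merely tedious but genuinely unclear.

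The paper does \emph{not} work level-by-level with $\psi_m$ plus $\rmE_{i,p},\Es_{i,p}$ corrections. Instead it observes that $\hcalA[m,m+1]$ is isomorphic to the quotient $A/\sum_j At_j^2$, where $A\subset U_\bfk(\g)$ is the subalgebra generated by $U_\bfk^-(\g)$, the elements $\calE_j:=-e_jt_j$, and $t_j^2$. The resulting surjection $K_m\colon A\to\hcalA[m,m+1]$ sends $f_j\mapsto\calF_{j,m}$, $\calE_j\mapsto\calF_{j,m+1}$, and $t_j^2\mapsto 0$. The point is that $\sfS_i(U_\bfk^-(\g))\subset A$ (because $\sfS_i(f_i)=\calE_i$), so $K_m\circ\sfS_i\colon U_\bfk^-(\g)\to\hcalA[m,m+1]$ is a genuine algebra homomorphism sending $f_j\mapsto Q_{j,m}:=\kappa_j^{-1}\TT_i(f_{j,m})$ for \emph{all} $j$, including $j=i$. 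This immediately gives \emph{all} the $q$-Serre relations among the $Q_{j,m}$'s at once, with no case distinction on whether an index equals $i$. Your plan, by contrast, singles out the $q$-Serre relation with one index equal to $i$ as a separate case to be handled by pushing powers of $f_{i,p+1}$ past $\TT_i(f_{k,p})$ via $\rmE_{i,p}$; but $\TT_i(f_{k,p})$ corresponds to $\sfS_i(f_k)\in\Ker e_i^*$, not $\Ker e_i'$, so the $\rmE_{i,p}$-corrections do not vanish, and you have not said what identity in $U_q^-(\g)$ you would land on.

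The same device controls the braid relations. The identity $K_m(\sfS_i(x))=\TT_i(K_m(x))$ for $x\in U_\bfk^-(\g)$ (and its companion $K_m(\sfS_i(\calE_j))=Q_{j,m+1}$, which needs a short computation) lets one apply the braid relation for the $\sfS_i$'s \emph{inside} $U_q(\g)$ and then push through $K_m$. For $k\ne i,j$ this is immediate; for $k=i$ one uses that a length-$(m_{i,j}-1)$ braid word in $\sfS_i,\sfS_j$ applied to $f_i$ stays in $U_\bfk^-(\g)$, so only a single level-shift (from the outermost $\TT_i$) occurs, and one checks equality of the two sides by a one-line computation per value of $m_{i,j}$. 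Your sketch (``track the level shifts\dots reduce to the braid relation in $U_q(\g)$'') does not explain how the multi-level element produced by an iterated $\TT$-word can be compared to a single $\sfS$-word in $U_q(\g)$ without such a bridge.

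In short: your instinct that everything should follow from Lusztig's braid symmetries is correct, but the mechanism that makes this work is the algebra homomorphism $K_m$ from (a piece of) $U_q(\g)$ onto $\hcalA[m,m+1]$, not the $\rmE_{i,p},\Es_{i,p}$ calculus you propose. The $q$-boson relations between adjacent levels (where the paper does compute directly, in cases (a)--(c)) are the only place a bare-hands calculation is needed.
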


\begin{proof}
  (A)\ Let us first show that $\TT_i$ is a well-defined $\bfk$-algebra endomorphism
  of $\hA$.

  Note that
  $\stt{\calF_{j,m}}_{(j,m)\in I\times\Z}$ satisfies the relations:
  \eq
  &&
  \sum_{r+s=b_{j,k}}(-1)^s\calF_{j,m}^{(r)}\calF_{k,m}\calF_{j,m}^{(s)}=0\qt{for $j\not=k$ where $b_{j,k}=1-\ang{h_j,\al_k}$,}\label{rel:qs}\\
  &&[\calF_{j,m},\calF_{k,p}]_q=\delta(j=k)\delta(p=m+1)
  q_j(1-q_j^2)^{-1}\qt{for any $j,k\in I$ and $m<p$.}\label{rel:mp}
  \eneq
  Set
  \eqn
  Q_{j,m}=\bc \calF_{i,m+1}&\text{if $j=i$,}\\
  \displaystyle\sum_{r+s = -\Ang{h_i,\al_j}} (-q_i)^s \calF_{i,m}^{(r)} \calF_{j,m} \calF_{i,m}^{(s)}  & \text{if $j \ne i$.}
  \ec
  \eneqn
  We have
  $$\wt(Q_{j,m})=s_i\bl\wt(\calF_{j,m})\br.$$
  
  In order to see that $\TT_i$ is well-defined,
  it is enough to show that
  $\stt{Q_{j,m}}_{(j,m)\in I\times\Z}$ satisfies the same relations as
  \eqref{rel:qs}
  and \eqref{rel:mp}.
  
  Set $\calE_j \seteq\sfS_j(f_j)= -e_jt_j\in U_q(\g)$.

The relation ~\eqref{eq: quantum group rel 2}  rewrites as  
\eq\label{eq: kappi boson}
f_j \calE_k = q^{(\al_j,\al_k)}\calE_k f_j +\delta_{j,k} q_j(1-q_j^2)^{-1}(1-t_j^2).
\eneq

Note that $\{ \calE_j \}_{j\in I}$ also satisfies the $q$-Serre relations in~\eqref{eq: quantum group rel 3}. Let us denote by $\tU_\bfk^+(\g)$ the 
$\bfk$-subalgebra of $U_\bfk(\g) \seteq \bfk \tens_{\Qq}U_q(\g)$ generated by $\{ \calE_{j} \}_{j\in I}$. Then we have an isomorphism 
$$U_\bfk^+(\g) \seteq \bfk \tens_{\Qq}U^+_q(\g) \isoto \tU_\bfk^+(\g)
\qt{sending $e_j \mapsto \calE_j$.}$$

\smallskip

Let $A$ be the $\bfk$-subalgebra $U_\bfk(\g)$ generated by
$\set{t_j^2}{j\in I}$, $U_\bfk^-(\g)$ and $\tU_\bfk^+(\g)$. Then, as a $\bfk$-vector space, we have
$$
A \simeq \bfk[t_j^2\mid j \in I] \tens_{\bfk} \tU_\bfk^+(\g) \tens_{\bfk} U_\bfk^-(\g). 
$$
Note that $$\sfS_i\bl U_\bfk^-(\g)\br \subset A.$$

Then $\sum_{j\in I}At_j^2$ is a two-sided ideal of $A$.
By \eqref{eq: kappi boson}, we can define a $\bfk$-algebra homomorphism
$\hA[m,m+1]\to A/\bl\sum_{j\in I}At_j^2\br$ by $\calF_{j,m}\mapsto f_j$
and $\calF_{j,m+1}\mapsto\calE_j$.
Since
$A/\bl\sum_{j\in I}At_j^2\br\simeq \tU_\bfk^+(\g) \tens_{\bfk} U_\bfk^-(\g)$,
the homomorphism above is an isomorphism.
It means that we have a $\bfk$-algebra homomorphism
$$
K_m \colon A \to \hcalA[m,m+1] 
$$
such that
\begin{align*}
K_m(t_j^2)=0, \quad K_m(\calE_j) = \calF_{j,m+1} \qtq     K_m(f_j) = \calF_{j,m}  \quad \text{ for any $j\in I$.}
\end{align*}

                                                    %                                                                $$
Hence $K_m\circ\sfS_i$ gives a $\bfk$-algebra homomorphism
$$K_m\circ\sfS_i\cl U_\bfk^-(\g)\to\hA[m,m+1].$$
We have
$$K_m\bl\sfS_i(f_j)\br=Q_{j,m}\quad \text{for any $j\in I$.}$$
Since $\stt{f_j}_{j\in I}$ satisfies the $q$-Serre relations,
$\stt{Q_{j,m}}_{(j,m)\in I\times\Z}$ satisfies the same relations as
\eqref{rel:qs}.

Hence it remains to prove the $q$-boson relations
\eq
  &&[Q_{j,m},Q_{k,p}]_q=\delta(j=k)\delta(p=m+1)
  q_j(1-q_j^2)^{-1}\qt{for any $j,k\in I$ and $m<p$.}\label{rel:mpQ}
\eneq

Since $[x,y]_q=0$ for any $p\in\Z$, $x\in\hA_{\le p}$ and $y\in\hA_{\ge p+2}$,
it is enough to prove that
\bna
\item \label{it: boson 1} $[Q_{i,m-1},Q_{j,m}]_q=0$ for $j\not=i$,
\item \label{it: boson 2} $[Q_{i,m-2},Q_{j,m}]_q=0$ for $j\not=i$,
\item \label{it: boson 3} $[Q_{j,m},Q_{k,m+1}]_q=\delta(j=k)q_j(1-q_j^2)^{-1}$ for any $j,k\in I\setminus\stt{i}$.
\ee

\mnoi
\eqref{it: boson 1} is equivalent to
$$\bigl[f_{i,m},K\bigr]_q=0,
$$
where $K=\sum_{r+s=c}(-q_i)^sf_{i,m}^{(r)}f_{j,m}f_{i,m}^{(s)}$ and $c=-\ang{h_i,\al_j}$.
Since $-(\al_i,c\al_i+\al_j)=-\sfd_ic$, we have
\eqn
\bigl[f_{i,m},K\bigr]_q
&&=\sum_{r+s=c}(-q_i)^sf_{i,m}f_{i,m}^{(r)}f_{j,m}f_{i,m}^{(s)}
-q_i^{-c}\sum_{r+s=c}(-q_i)^sf_{i,m}^{(r)}f_{j,m}f_{i,m}^{(s)}f_{i,m}\\
&&=\sum_{r+s=c}(-q_i)^s[r+1]_if_{i,m}^{(r+1)}f_{j,m}f_{i,m}^{(s)}
-\sum_{r+s=c}(-q_i)^sq_i^{-c}[s+1]_if_{i,m}^{(r)}f_{j,m}f_{i,m}^{(s+1)}\\
&&=\sum_{r+s=c+1}(-q_i)^s[r]_if_{i,m}^{(r)}f_{j,m}f_{i,m}^{(s)}
-\sum_{r+s=c+1}(-q_i)^{s-1}q_i^{-c}[s]_if_{i,m}^{(r)}f_{j,m}f_{i,m}^{(s)}.
\eneqn
Since
\eqn
(-q_i)^s[r]_i-(-q_i)^{s-1}q_i^{-c}[s]_i
&&=(-q_i)^s\bl[c+1-s]_i+q_i^{-c-1}[s]_i\br\\
&&=(-q_i)^sq_i^{-s}[c+1]_i=(-1)^s[c+1]_i,
\eneqn
we have
$$\bigl[f_{i,m},K\bigr]_q=[c+1]_i\sum_{r+s=c+1}(-1)^s
f_{i,m}^{(r)}f_{j,m}f_{i,m}^{(s)}=0.$$
Thus, we obtain~\eqref{it: boson 1}.

\mnoi
\eqref{it: boson 2} is equivalent to
$$\bigl[f_{i,m-1},K\bigr]_q=0,
$$
where $K=\sum_{r+s=c}(-q_i)^sf_{i,m}^{(r)}f_{j,m}f_{i,m}^{(s)}$ and $c=-\ang{h_i,\al_j}$.
We can easily see
$$\bigl[f_{i,m-1}, f_{i,m}^{(r)}\,\bigr]_q=q_{i}^{r-1}(1-q_i^2) f_{i,m}^{(r-1)}.$$
Here we understand $f_{i,m}^{(-1)}=0$.
Hence we have
\eqn
\bigl[f_{i,m-1},K\bigr]_q
&&=\sum_{r+s=c}(-q_i)^s
\Bigl([f_{i,m-1},f_{i,m}^{(r)}]_q\,f_{j,m}f_{i,m}^{(s)}\\
&&\hs{15ex}+q_i^{2r}f_{i,m}^{(r)}\;[f_{i,m-1},f_{j,m}]_q\;f_{i,m}^{(s)}
+q_i^{2r-c}f_{i,m}^{(r)}f_{j,m}\;[f_{i,m-1},f_{i,m}^{(s)}]_q\Bigr)\\
&&=(1-q_i^2)\sum_{r+s=c}
\Bigl((-q_i)^sq_i^{r-1}f_{i,m}^{(r-1)}f_{j,m}f_{i,m}^{(s)}
+(-q_i)^sq_i^{2r-c}q_i^{s-1}f_{i,m}^{(r)}f_{j,m}f_{i,m}^{(s-1)}\Bigr)\\
&&=(1-q_i^2)\sum_{r+s=c}
\Bigl((-1)^sq_i^{c-1}f_{i,m}^{(r-1)}f_{j,m}f_{i,m}^{(s)}
+(-1)^sq_i^{c-1}f_{i,m}^{(r)}f_{j,m}f_{i,m}^{(s-1)}\Bigr)\\
&&=(1-q_i^2)\sum_{r+s=c-1}
\Bigl((-1)^{s}q_i^{c-1}f_{i,m}^{(r)}f_{j,m}f_{i,m}^{(s)}
+(-1)^{s+1}q_i^{c-1}f_{i,m}^{(r)}f_{j,m}f_{i,m}^{(s)}\Bigr)=0.
\eneqn

\medskip
Finally, let us prove~\eqref{it: boson 3}.
We shall first show
\eq
K_m\bl\sfS_i(\calE_j)\br=Q_{j,m+1}\qt{for $j\not=i$.}
\label{eq:KSQ}
\eneq
We have
\eqn
Q_{j,m+1}
=\sum_{r+s=c}(-q_i)^s\calF_{i,m+1}^{(r)}\calF_{j,m+1}\calF_{i,m+1}^{(s)}
=K_m\Bigl(
\sum_{r+s=c}(-q_i)^s\calE_{i}^{(r)}\calE_{j}\calE_{i}^{(s)}\Bigr).
\eneqn
Since we have
  $$\calE_j^r=(-1)^rq_j^{r(r-1)}e_i^rt_i^r,$$
we obtain
\eq
\sum_{r+s=c}(-q_i)^s\calE_{i}^{(r)}\calE_{j}\calE_{i}^{(s)}
&&=\sum_{r+s=c}(-q_i)^s(-1)^rq_i^{r(r-1)}e_i^{(r)}t_i^r(-e_jt_j)(-1)^s
q_i^{s(s-1)}e_i^{(s)}t_i^s\label{eq:SE}\\
&&=\sum_{r+s=c}(-1)^{c+1}(-q_i)^s
q_i^{r(r-1)+s(s-1)}e_i^{(r)}t_i^re_jt_je_i^{(s)}t_i^s.\nonumber
\eneq
Since
$$e_i^{(r)}t_i^re_jt_je_i^{(s)}t_i^s
=q_i^{-rc+2rs-sc}e_i^{(r)}e_je_i^{(s)}t_i^ct_j
$$
and
$r(r-1)+s(s-1)-rc+2rs-sc=-c$,
we conclude that \eqref{eq:SE}
is equal to
$$(-1)^{c+1}q_i^{-c}\sum_{r+s=c}(-q_i)^se_i^{(r)}e_je_i^{(s)}t_i^ct_j
=-\sum_{r+s=c}(-q_i)^{s-c}e_i^{(r)}e_je_i^{(s)}t_i^ct_j
=\sfS_i(-e_jt_j).$$
Hence we obtain \eqref{eq:KSQ}.

We are now ready to prove~\eqref{it: boson 3}.
Since $[f_j,\calE_k]_q=\delta_{j,k}q_j(1-q_j^2)^{-1}(1-t_j^2)$, we have
\eqn
[Q_{j,m}, Q_{k,m+1}]_q
&&=\bigl[K_m\circ\sfS_i(f_j), K_m\circ\sfS_i(\calE_k)\bigr]_q
=K_m\circ\sfS_i\Bigl([f_j,\calE_k)]_q\Bigr)\\
&&=K_m\Bigl(\delta_{j,k}q_j(1-q_j^2)^{-1}(1-t_j^2t_i^{-2\ang{h_i,\al_j}})\Bigr)
=\delta_{j,k}q_j(1-q_j^2)^{-1}.
\eneqn

\mnoi
(B) Since $\TTiv{i}=\star\circ\TT_i\circ\star$, we conclude that
$\TTiv{i}$ is a well-defined endomorphism.

\snoi
(C) Let us prove that $\TT_i$ and $\TT_i^\star$ are inverse to each other.
We shall  show
$\TT_i\circ\TTiv{i}=\id_{\hA}$.
Thus it is enough to show that
$$
\TT_i \circ \TT_i^\star(\calF_{j,m})  = \calF_{j,m} 
$$
for any $j \in I$ and $m \in \Z$. When $i=j$, it is obvious.

Assume that $j\not=i$.
Then we have
$$K_m(\sfS^*_i(f_j))=\TT_i^\star(\calF_{j,m}) .$$
Hence, we have
$$\TT_i \circ \TT_i^\star(\calF_{j,m})  =
\TT_i\bl K_m(\sfS^*_i(f_j))\br=
K_m(\sfS_i\circ\sfS^*_i(f_j))=K_m(f_j)=\calF_{j,m}.$$
Thus we have obtained $\TT_i\circ\TTiv{i}=\id_{\hA}$.
By applying $\star$, we obtain 
$\TTiv{i}\circ\TT_i=\id_{\hA}$ as well.

\bnoi
(D) Note that
we have
\eq
K_m(\sfS_i(x))=\TT_i \bl K_m(x)\br\qt{for any $x\in\Uqmh$.}
\label{eq:ST}
\eneq
To show the braid relation of  $\{ \TT_i \}_{i \in I}$,
it is enough to show that
\begin{align} \label{eq: braid claim}
\underbrace{\TT_i \circ \TT_j \circ \cdots}_{m_{i,j}\text{-times}}(\calF_{k,m})
= \underbrace{\TT_j \circ \TT_i \circ \cdots}_{m_{i,j}\text{-times}}(\calF_{k,m}) \quad\text{for $i \ne j$ and $k \in I$,}    
\end{align}
where $m_{i,j}$ is the one given in~\eqref{eq: m_ij}. 

If $k\not=i,j$, \eqref{eq: braid claim} is a straight
consequence of the braid relation of
$\stt{\sfS_i}_{i\in I}$ and \eqref{eq:ST}.

Hence we may assume that $k=i$.
Since the proofs are similar, we only consider the case 
$m_{i,j}=4$.

\snoi
By \eqref{eq:ST}, we obtain
\begin{align*}
&\TT_j \circ \TT_i\circ  \TT_j (\calF_{i,m})  =K_m ( \sfS_j \circ \sfS_i\circ  \sfS_j (f_{i}) )  = K_m (f_{i}) =\calF_{i,m}.
\end{align*} 
Hence
\eqn
&&\TT_i\circ\TT_j \circ  \TT_i \circ \TT_j(\calF_{i,m})
=\TT_i(\calF_{i,m}) = \calF_{i,m+1},
\eneqn
and
\eqn
&\TT_j \circ  \TT_i \circ \TT_j\circ  \TT_i (\calF_{i,m})
=\TT_j \circ  \TT_i \circ \TT_j(\calF_{i,m+1}) = \calF_{i,m+1}.
\eneqn
Thus the assertion follows.
\end{proof}

By \eqref{eq:ST}, we have
\begin{equation} \label{eq: T_i S_i}
\begin{aligned} 
 \TT_i(\psi_m(x)) & = \psi_m(\sfS_i x)    \quad \hspace{0.4ex} \text{ for any } x \in \Uqgm \text{ with } e_i'(x)=0, \\
  \TTiv{i}(\psi_m(x)) & = \psi_m(\sfS^*_i x)    \quad \text{ for any } x \in \Uqgm \text{ with } e_i^*(x)=0.
\end{aligned}
\end{equation}

\begin{lemma} \label{lem: T_i and autos}
For any $i\in I$, we have the followings:
\bnum
\item $\ocalD \circ \TT_i=\TT_i \circ \ocalD$,
\item $\TTiv{i} = \star \circ \TT_i \circ \star$,
\item \label{it: T_i and bar}
$\TT_i \circ \overline{\phantom{a}} = \overline{\phantom{a}} \circ \TT_i$,
\item \label{it: T_i and c}
  $\TT_i \circ c = c \circ \TT_i$,
\item $\wt(\TT_ix)=s_i\wt(x)$ for any homogeneous $x\in\hA$,
  \item
  $\TT_i\hA[a,b]\subset\hA[a,b+1]$ and
  $\TTiv{i}\hA[a,b]\subset\hA[a-1,b]$.
\ee
\end{lemma}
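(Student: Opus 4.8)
The plan is to prove all six assertions by reducing them to the algebra generators $f_{j,m}$ of $\hcalA$, using the explicit formulas \eqref{eq: T_i} and \eqref{eq: T_i inverse} together with the fact (Theorem~\ref{thm: T-braid}) that $\TT_i$ and $\TTiv{i}$ are $\bfk$-algebra automorphisms, and that $\ocalD$, $\star$, $\overline{\phantom{a}}$ are (anti-)homomorphisms with explicit action on generators. Items (i), (ii), (v) and (vi) are then formal. For (i): both $\ocalD\circ\TT_i$ and $\TT_i\circ\ocalD$ are algebra automorphisms, and since $\ocalD$ fixes $q_i$ and shifts every index of a monomial in the $f$'s (hence the $\calF$'s) uniformly by $1$, $\ocalD(\TT_i(f_{j,m}))$ and $\TT_i(\ocalD(f_{j,m}))=\TT_i(f_{j,m+1})$ are the same expression with $m$ replaced by $m+1$. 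For (ii): $\star$ fixes $q_i$, sends $\calF_{i,m}$ to $\calF_{i,-m}$, and being an anti-automorphism it reverses the two factors $\calF_{i,m}^{(r)}$, $\calF_{i,m}^{(s)}$ in the defining sum, so that after reindexing $r\leftrightarrow s$ the automorphism $\star\circ\TT_i\circ\star$ is given on generators by \eqref{eq: T_i inverse}, i.e.\ equals $\TTiv{i}$; this is the identity already used in part (B) of the proof of Theorem~\ref{thm: T-braid}. For (v): since weights are additive and $\TT_i$ is multiplicative it suffices to check generators, and $\wt(\TT_i f_{i,m})=\wt(f_{i,m+1})=(-1)^m\al_i=s_i\wt(f_{i,m})$ while for $j\ne i$ each monomial of $\TT_i(f_{j,m})$ has weight $-(-1)^m\bl(r+s)\al_i+\al_j\br=-(-1)^m s_i(\al_j)=s_i\wt(f_{j,m})$ because $r+s=-\ang{h_i,\al_j}$ (this is $\wt(Q_{j,m})=s_i\wt(\calF_{j,m})$ from the proof of Theorem~\ref{thm: T-braid}). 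For (vi): by \eqref{eq: T_i} and \eqref{eq: T_i inverse}, $\TT_i$ sends a generator $f_{j,k}$ of $\hcalA[a,b]$ either to $f_{i,k+1}\in\hcalA[a,b+1]$ or to an element of $\hcalA[k]\subset\hcalA[a,b]$, and $\TTiv{i}$ sends it either to $f_{i,k-1}\in\hcalA[a-1,b]$ or to an element of $\hcalA[k]$; since the targets are subalgebras and $\TT_i$, $\TTiv{i}$ are homomorphisms, the inclusions follow.

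The one step requiring an actual computation is (iii). Both $\TT_i\circ\overline{\phantom{a}}$ and $\overline{\phantom{a}}\circ\TT_i$ are $\Q$-algebra anti-automorphisms of $\hcalA$, so it is enough to compare them on generators, and for $j=i$ this is immediate from $\overline{f_{i,m}}=f_{i,m}$. For $j\ne i$ I would first record the scalar identity $\overline{\calF_{i,m}}=-q_i\calF_{i,m}$, which follows from $\overline{q_i^{1/2}(1-q_i^2)^{-1}}=q_i^{-1/2}(1-q_i^{-2})^{-1}=-q_i^{3/2}(1-q_i^2)^{-1}$, whence $\overline{\calF_{i,m}^{(n)}}=(-q_i)^n\calF_{i,m}^{(n)}$. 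Applying the anti-automorphism $\overline{\phantom{a}}$ to $\TT_i(f_{j,m})=\sum_{r+s=c}(-q_i)^s\calF_{i,m}^{(r)}f_{j,m}\calF_{i,m}^{(s)}$, with $c=-\ang{h_i,\al_j}$, reverses the order of the three factors and produces $\sum_{r+s=c}(-q_i^{-1})^s(-q_i)^{c}\calF_{i,m}^{(s)}f_{j,m}\calF_{i,m}^{(r)}$; since $(-q_i^{-1})^s(-q_i)^{c}=(-1)^{s+c}q_i^{\,c-s}=(-1)^{r}q_i^{\,r}=(-q_i)^r$, reindexing $r\leftrightarrow s$ recovers $\TT_i(f_{j,m})=\TT_i(\overline{f_{j,m}})$. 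The only delicate point is the bookkeeping of signs and powers of $q_i$; I expect no conceptual difficulty here, just this short calculation, which is why I regard (iii) as the (mild) crux of the lemma.

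Finally (iv) follows by combining (iii) with (v). Since $c(x)=q^{N(\wt x)}\overline{x}$ on homogeneous $x$ and both $c\circ\TT_i$ and $\TT_i\circ c$ are $\Q$-linear, it is enough to treat homogeneous $x$ of weight $\be$. Using (iii), $\TT_i(c(x))=q^{N(\be)}\TT_i(\overline{x})=q^{N(\be)}\overline{\TT_i(x)}$, while $\wt(\TT_i x)=s_i\be$ by (v) and the $\weyl$-invariance of $(\cdot,\cdot)$ gives $N(s_i\be)=(s_i\be,s_i\be)/2=(\be,\be)/2=N(\be)$, so $c(\TT_i(x))=q^{N(s_i\be)}\overline{\TT_i(x)}=q^{N(\be)}\overline{\TT_i(x)}=\TT_i(c(x))$. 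Hence $c\circ\TT_i=\TT_i\circ c$, which completes the plan.
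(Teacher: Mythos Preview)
Your proof is correct and follows exactly the route the paper intends: the paper's own proof reads in its entirety ``One can easily check the assertion by their definitions,'' and you have carried out precisely those checks on generators, including the small computation for (iii) and the reduction of (iv) to (iii) and (v) via the $\weyl$-invariance of $(\cdot,\cdot)$.
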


\begin{proof}
One can easily check the assertion by their definitions.     
\end{proof}

\subsection{$\TT_i$-invariance of bilinear forms and lattices} 
In this subsection, we will prove the invariance of the bilinear forms and lattices under the automorphisms $\{ \TT_i \}_{i \in I}$.

\begin{proposition}
For $i \in I$ and $x \in \hcalA$, we have 
$$
\Mn(\TT_i(x)) = \Mn( x). 
$$
\end{proposition}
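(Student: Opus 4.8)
The plan is to reduce the identity to a convenient spanning set of each window algebra $\hcalA[a,b]$ and then exploit the compatibility \eqref{eq: T_i S_i} between $\TT_i$ and Lusztig's symmetry $\sfS_i$. Since $\Mn$ and $\TT_i$ are $\bfk$-linear and $\hcalA=\bigcup_{a\le b}\hcalA[a,b]$, it suffices to check $\Mn(\TT_i x)=\Mn(x)$ for $x=\oprod_{k\in[a,b]}\psi_k\bl f_i^{(l_k)}z_k\br$ with $l_k\in\Z_{\ge0}$ and $z_k\in\Uqgm[i]$; indeed, Theorem~\ref{thm: existence} together with the decomposition $\Uqgm=\bigoplus_{l\ge0}f_i^{(l)}\,\Uqgm[i]$, obtained by transporting Corollary~\ref{cor:iseries} through the isomorphism $\iota$, shows that such products span $\hcalA[a,b]$. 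A preliminary remark I would record first is that for $w_k\in\hcalA[k]$ one has $\Mn\bl\oprod_{k\in[a,b]}w_k\br=\prod_{k\in[a,b]}\ve_k(w_k)$, where $\ve_k\colon\hcalA[k]\to\hcalA[k]_0=\bfk$ denotes the projection onto the weight-zero part; writing $\ve_k=\ve\circ\psi_k^{-1}$ with $\ve$ the weight-zero projection of $\Uqgm$ (an algebra homomorphism, since the positive-height part is an ideal), we see that each $\ve_k$ is multiplicative and that $\ve_k\bl\calF_{i,k}^{(l)}\br=\delta_{l,0}$.

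Granting this, the value $\Mn(x)$ is immediate: from $\psi_k\bl f_i^{(l_k)}z_k\br=\calF_{i,k}^{(l_k)}\psi_k(z_k)$ we get $\Mn(x)=\prod_{k\in[a,b]}\delta_{l_k,0}\,\ve(z_k)$. For $\Mn(\TT_i x)$ I would use that $\TT_i$ is an algebra automorphism satisfying $\TT_i(\calF_{i,k})=\calF_{i,k+1}$ and $\TT_i\circ\psi_k=\psi_k\circ\sfS_i$ on $\{\,z\mid e_i'(z)=0\,\}$ by \eqref{eq: T_i S_i}, so that $\TT_i(x)=\oprod_{k\in[a,b]}\bl\calF_{i,k+1}^{(l_k)}\psi_k(\sfS_i z_k)\br$. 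The decisive point is that $\calF_{i,k+1}^{(l_k)}\in\hcalA[k+1]$ sits in the window \emph{one step above} $\psi_k(\sfS_i z_k)\in\hcalA[k]$, so that collecting the consecutive factors lying in each fixed $\hcalA[k]$ rewrites $\TT_i(x)$ in the standard form
$$\TT_i(x)=\calF_{i,b+1}^{(l_b)}\cdot\left(\oprod_{k\in[a+1,b]}\psi_k(\sfS_i z_k)\,\calF_{i,k}^{(l_{k-1})}\right)\cdot\psi_a(\sfS_i z_a)\ \in\ \hcalA[a,b+1].$$
Applying the formula for $\Mn$ of a standard product, multiplicativity of the $\ve_k$'s, the relations $\ve_k\bl\calF_{i,k}^{(l)}\br=\delta_{l,0}$, and the identity $\ve\circ\sfS_i=\ve$ (valid because $\sfS_i$ fixes scalars and permutes nonzero weight spaces), one then computes $\Mn(\TT_i x)=\delta_{l_b,0}\prod_{k\in[a+1,b]}\delta_{l_{k-1},0}\,\ve(z_k)\cdot\ve(z_a)=\prod_{k\in[a,b]}\delta_{l_k,0}\,\ve(z_k)=\Mn(x)$.

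I expect the only genuine obstacle to be organizational: carrying out the regrouping step cleanly, i.e.\ making precise that $\TT_i$ pushes each divided-power factor $f_i^{(l_k)}$ one window upward into $\hcalA[k+1]$ while leaving $\psi_k(\sfS_i z_k)$ in window $k$, and then verifying that after merging same-window factors the result is genuinely the standard-form decomposition of an element of $\hcalA[a,b+1]$ — so that in particular the leftmost factor $\calF_{i,b+1}^{(l_b)}$ sits alone in the top window $b+1$ and forces $l_b=0$. Everything else is routine weight bookkeeping. (Should the analogous identity for $\TTiv{i}$ be needed later, it is immediate from $\TTiv{i}=\star\circ\TT_i\circ\star$ together with $\Mn\circ\star=\Mn$, the latter following from Theorem~\ref{thm: hAform}(a) via $\Mn(z)=\hAform{z,1}$.)
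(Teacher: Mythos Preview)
Your argument is correct. The key ingredients---the factorization $\Uqgm=\bigoplus_{l\ge0}f_i^{(l)}\Uqgm[i]$, the compatibility $\TT_i\circ\psi_k=\psi_k\circ\sfS_i$ on $\Uqgm[i]$ from \eqref{eq: T_i S_i}, the multiplicativity of $\ve$ coming from the fact that the nonzero-height part of $\Uqgm$ is an ideal, and the observation $\ve\circ\sfS_i=\ve$---are all sound, and the regrouping step is exactly as you describe: each $\calF_{i,k}^{(l_k)}$ is pushed to window $k+1$ and merges with $\psi_{k+1}(\sfS_iz_{k+1})$, leaving $\calF_{i,b+1}^{(l_b)}$ alone at the top.

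The paper organizes the proof differently. Rather than computing both sides on a spanning set, it shows directly that $\TT_i(\ker\Mn)\subset\ker\Mn$. It writes a generic element of $\ker\Mn$ as $yzw$ with $y\in\hcalA_{>m}$, $z\in\hcalA[m]_\beta$ for some $\beta\ne0$, $w\in\hcalA_{<m}$, then expands $\TT_i(z)=\sum_{k\ge0}f_{i,m+1}^k z_k$ with $z_k\in\hcalA[m]_{s_i\beta-k\alpha_{i,m}}$. For $k>0$ the window-$(m+1)$ component of $(\TT_iy)f_{i,m+1}^k$ has weight in $(-1)^{m+1}\rl^-\setminus\{0\}$ (the cone is pointed), and for $k=0$ the window-$m$ weight $s_i\beta$ is nonzero; either way $\Mn$ vanishes. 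Your approach is more explicit and makes the role of $\sfS_i$ transparent; the paper's is shorter and avoids choosing a spanning set, instead working windowwise with an arbitrary homogeneous middle factor. Both exploit the same underlying fact that $\TT_i(\hcalA[m])\subset\bfk[f_{i,m+1}]\cdot\hcalA[m]$ together with the pointedness of the weight cone in each window.
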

\begin{proof} 
It is enough to show that
\begin{align*}
\Mn(\TT_i(x)) =0 \ \text{ for any } \be \in (-1)^m\nrl \setminus \{ 0 \} \text{ and } x \in \hcalA_{>m} \cdot \hcalA[m]_\be \cdot \hcalA_{<m}.      
\end{align*}
Write $x = yzw$ with $y\in \hcalA_{>m}$, $z\in \hcalA[m]_\be$ and $w\in \hcalA_{<m}$. Then we have $\TT_i(z) \in \bfk[f_{i,m+1}]\cdot \hcalA[m]$.
Write $\TT_i(z) = \sum_{k \in \Z_{\ge0}} f_{i,m+1}^k z_k$ with $z_k \in \hcalA[m]_{s_i\be -k\al_{i,m}}$. Then we have
$$
\TT_i(x) = \sum_{k \in \Z_{\ge0}} (\TT_iy) f_{i,m+1}^k z_k (\TT_i w).
$$
Since $\TT_i(y)\in \hcalA_{>m}$ and $z_k(\TT_i w) \in \hcalA_{\le m}$
we have $\Mn\big( (\TT_iy) f_{i,m+1}^k z_k (\TT_i w) \big)=0$ for $k>0$. 

When $k=0$, we have $\Mn\big( (\TT_iy) z_k (\TT_i w) \big) =0$
since $\TT_i(y) \in \hcalA_{>m}$, $\TT_i(w) \in \hcalA_{\le m}$ and $z_0 \in \hcalA[m]_{s_i\be}$
with $s_i \be \ne 0$. Hence $\Mn(\TT_i x)=0$. 
\end{proof}

\begin{corollary}\label{Cor:invTT}
For any $i \in I$, the pairings $\hAform{ \ , \ }$ and $\pair{ \ , \ }$ are invariant by $\TT_i$. Namely, we have
$$
\hAform{\TT_i x,\TT_i y}= \hAform{x, y} \qtq \pair{\TT_i x,\TT_i y}= \pair{x, y} \quad \text{ for any } x,y \in \hcalA.  
$$
\end{corollary}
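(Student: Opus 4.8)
The plan is to deduce the $\TT_i$-invariance of both bilinear forms as a formal consequence of the identity $\Mn(\TT_i(x))=\Mn(x)$ just established, combined with the commutations recorded in Lemma~\ref{lem: T_i and autos} and the Weyl-invariance of $N$.

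For $\hAform{\ ,\ }$, I would start from the definition $\hAform{x,y}=\Mn(x\,\ocalD(y))$. Since $\TT_i$ is a $\bfk$-algebra automorphism of $\hcalA$ (Theorem~\ref{thm: T-braid}) and $\ocalD\circ\TT_i=\TT_i\circ\ocalD$ (Lemma~\ref{lem: T_i and autos}(i)), one has
$$\TT_i(x)\,\ocalD\bl\TT_i(y)\br=\TT_i(x)\,\TT_i\bl\ocalD(y)\br=\TT_i\bl x\,\ocalD(y)\br,$$
so applying $\Mn$ and the preceding Proposition gives
$$\hAform{\TT_i x,\TT_i y}=\Mn\bl\TT_i(x\,\ocalD(y))\br=\Mn\bl x\,\ocalD(y)\br=\hAform{x,y}$$
for all $x,y\in\hcalA$, with no homogeneity hypothesis needed.

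For $\pair{\ ,\ }$, recall that this form is obtained from $\hAform{\ ,\ }$ via $\sigma$ on homogeneous elements and then extended bilinearly. Because both forms vanish on pairs of homogeneous elements of distinct weights (Theorem~\ref{thm: hAform}(d)) and $\TT_i$ maps $\hcalA_\be$ into $\hcalA_{s_i\be}$ (Lemma~\ref{lem: T_i and autos}(v)), it suffices to treat homogeneous $x,y$ of a common weight $\be$. Then $\wt(\TT_i x)=s_i\be$, and since $N(\al)=(\al,\al)/2$ is $\weyl$-invariant, $N(\wt(\TT_i x))=N(s_i\be)=N(\be)=N(\wt(x))$; hence by \eqref{eq: pair form} and the first part,
$$\pair{\TT_i x,\TT_i y}=q^{-N(\wt(\TT_i x))}\hAform{\TT_i x,\TT_i y}=q^{-N(\wt(x))}\hAform{x,y}=\pair{x,y}.$$

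I expect no real obstacle here: the argument is entirely formal once the $\Mn$-invariance is in hand. The only point requiring a word of care is the reduction for $\pair{\ ,\ }$ to homogeneous elements of equal weight, which is immediate from the orthogonality of distinct weight spaces and the fact that $\TT_i$ permutes the weight-space decomposition by $w\mapsto s_i w$.
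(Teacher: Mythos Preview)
Your argument is correct and follows essentially the same route as the paper's own proof: both derive $\hAform{\TT_i x,\TT_i y}=\hAform{x,y}$ from $\Mn\circ\TT_i=\Mn$ together with $\ocalD\circ\TT_i=\TT_i\circ\ocalD$, and then obtain the $\pair{\ ,\ }$-invariance from $N(s_i\be)=N(\be)$. Your treatment is slightly more explicit about the reduction to homogeneous elements of equal weight, but there is no substantive difference.
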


\begin{proof}
We have
$$
\hAform{\TT_i x,\TT_i y}= \Mn(  (\TT_i x) \cdot \ocalD(\TT_i y) ) = \Mn(   \TT_i (x \ocalD (y)) ) = \Mn(x \ocalD (y) ) =\hAform{x,y},
$$
and 
\[
\pair{\TT_ix,\TT_iy} = q^{-N(\wt(\TT_ix))} \hAform{\TT_ix,\TT_i y} = q^{-N(\wt(x))} \hAform{x, y} =\pair{x,y}. \qedhere 
\]
\end{proof}

\Prop\label{prop:lattinv}
The  lattices $\hcalA_{\bbA}$ and $\hcalA_{\bfA}\seteq
\bfA\tens_{\Zq}\hA_{\Zq}$ of $\hcalA$ are invariant under $\TT_i^{\;\pm1}$. 
\enprop
\Proof
By \eqref {eq:Wxp} and \eqref{eq: T_i S_i}, we have
$$
\TT_i \left(\psi_m(\Uzgm) \right) \subset
\sum_{k\in\Z_{\ge0}}\calF_{i,m+1}^{(k)}\psi_{m}(\Uzgm).
$$
Thus we obtain 
$$
\TT_i \left(  \oprod_{m \in \Z} \psi_m(\Uzgm) \right) \subset \oprod_{m \in \Z} \psi_m(\Uzgm). 
$$
The similar argument shows
$$
\TTiv{i} \left(  \oprod_{m \in \Z} \psi_m(\Uzgm) \right) \subset \oprod_{m \in \Z} \psi_m(\Uzgm),  
$$
and hence we obtain
$$
\TT_i \left(  \oprod_{m \in \Z} \psi_m(\Uzgm) \right)=\oprod_{m \in \Z} \psi_m(\Uzgm). 
$$
Then \eqref{eq: hatA z characterization} along with Corollary~\ref{Cor:invTT}
implies that
$\TT_i\bl\hAz\br=\hAz$.
\QED

\begin{lemma} \label{lem: Ti LuphA}
The $\Z[q]$-lattice $\LuphA$ of $\hcalA$ is invariant under $\TT_i$. 
\end{lemma}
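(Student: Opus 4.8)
The plan is to deduce the statement from two invariance facts already established, combined with the intrinsic description of the lattice. Recall from Proposition~\ref{Prop: properties of Gb} that
$$\LuphA=\set{x\in\hAz}{\pair{x,x}\in\Q[[q]]},$$
from Proposition~\ref{prop:lattinv} that $\TT_i^{\,\pm1}\bl\hAz\br=\hAz$, and from Corollary~\ref{Cor:invTT} that $\pair{\TT_ix,\TT_iy}=\pair{x,y}$ for all $x,y\in\hcalA$. First I would note that, since $\TT_i$ is bijective with inverse $\TTiv{i}$, substituting $\TTiv{i}x$ and $\TTiv{i}y$ for $x$ and $y$ in the last identity gives $\pair{\TTiv{i}x,\TTiv{i}y}=\pair{x,y}$; thus $\pair{\ ,\ }$ is invariant under $\TT_i^{\,\pm1}$.

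With these preparations the proof is immediate. Let $x\in\LuphA$. Then $x\in\hAz$, so $\TT_i^{\,\pm1}x\in\hAz$ by Proposition~\ref{prop:lattinv}, and $\pair{\TT_i^{\,\pm1}x,\TT_i^{\,\pm1}x}=\pair{x,x}\in\Q[[q]]$ by the invariance just noted; hence $\TT_i^{\,\pm1}x\in\LuphA$ by the characterization above. Therefore $\TT_i\bl\LuphA\br=\LuphA$, which is the assertion.

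I do not anticipate any real obstacle, as every ingredient is already available; the only point to be careful about is reading ``invariant under $\TT_i$'' as the equality $\TT_i(\LuphA)=\LuphA$, which is why one also needs the statement for $\TT_i^{-1}=\TTiv{i}$ (one may alternatively invoke $\TTiv{i}=\star\circ\TT_i\circ\star$ from Lemma~\ref{lem: T_i and autos}). Should one prefer to avoid the $\pair{\ ,\ }$-characterization, a more direct route is to use $\LuphA=\sbcup_{a\le b}\oprod_{k\in[a,b]}\vph_k\bl\LupA\br$ together with the shift $\TT_i\hA[a,b]\subseteq\hA[a,b+1]$ of Lemma~\ref{lem: T_i and autos} and the compatibility of $\sfS_i$ with the upper global basis of $\Uqgm$ (combining~\eqref{eq:Wxp}, \eqref{eq: T_i S_i} and Corollary~\ref{cor:iseries}); but the argument above is shorter, so I would take it.
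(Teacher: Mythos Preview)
Your proposal is correct and follows essentially the same approach as the paper's own proof, which simply cites Proposition~\ref{Prop: properties of Gb}\,\eqref{item:Lup}, Corollary~\ref{Cor:invTT}, and Proposition~\ref{prop:lattinv}. Your write-up merely spells out the one-line deduction that the paper leaves implicit.
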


\begin{proof}
  The assertion follows from
  Proposition~\ref{Prop: properties of Gb}\;\eqref{item:Lup},
  Corollary~\ref{Cor:invTT}
  and Proposition~\ref{prop:lattinv}.
\end{proof}

\subsection{$\TT_i$-invariance of global bases}
In this subsection, we will prove the invariance of global bases of $\hcalA$ under the automorphisms $\{ \TT_i \}_{i \in I}$.

\smallskip

In \cite{Saito94}, Saito proved the invariance of the upper global basis $\bfG^\up$ under the braid group action via $\{ \sfS_i,\sfS^*_i \}_{i \in I}$. 
We briefly review the invariance. Recall the crystal operators $\te_i$, $\tf_i$, $\te_i^*$ and $\tf_i^*$ in Section~\ref{subsec: crystal upper global}.
For $b \in B(\infty)$, we set
$$
\te_i^{\max}(b) \seteq    \te_i^{\upve_i(b)}(b) \qtq  \te_i^{*\max}(b) \seteq    \te_i^{*\upve^*_i(b)}(b),
$$
where $\upve_i(b) \seteq \max\{ k \ge 0 \ | \  \te_i^k(b) \ne 0 \}$
and $\upve^*_i(b) \seteq \max\{ k \ge 0 \ | \  \te_i^{*k}(b) \ne 0 \}$. 

For any $i \in I$, we set
$$
B(\infty)[i] = \{ b \in B(\infty) \ | \ \upve_i(b) = 0 \}
\qtq
B(\infty)[i]^* = \{ b \in B(\infty) \ | \ \upve^*_i(b) = 0 \}. 
$$

The \emph{Saito crystal reflections} on the crystal $B(\infty)$ are defined as follows:
\begin{align} \label{eq: saito reflection}
\scrS_i: B(\infty)[i] \to B(\infty)[i]^* \qtq \scrS^*_i: B(\infty)[i]^* \to B(\infty)[i]     
\end{align}
such that
$$
\scrS_i(b) = \tf_i^{\upvp_i^{*}(b)}\te_i^{*\upve_i^{*}(b)}(b) \qtq \scrS^*_i(b) = \tf_i^{*\upvp_i(b)}\te_i^{\upve_i(b)}(b), \text{ respectively.}
$$
Here $\upvp_i(b) \seteq \upve_i(b)+\Ang{h_i,\wt(b)}$ and
$\upvp^*_i(b) \seteq \upve^*_i(b)+\Ang{h_i,\wt(b)}$. Then it is proved in~\cite{Saito94, Lusztig96, Kimura16} that
$$ 
\sfS_i(\Gup(b))  = \Gup(\scrS_i(b)) \qtq \sfS^*_i(\Gup(b'))  = \Gup(\scrS^*_i(b')) 
$$
for any $b \in B(\infty)[i]$ and $b' \in B(\infty)[i]^*$, respectively. 

It is known that for any $i \in I$ and $b \in B(\infty)$, we have
(\cite{K93})
\begin{align}\label{eq: b equiv} 
\Gup(b) \equiv \Ang{i^{\upve_i(b)}} \Gup(\te_{i}^{\hspace{0.4ex}\max}b) \mod q\LupA.
\end{align} 
Then~\eqref{eq: Si in A} and Lemma~\ref{lem: Ti LuphA} say that
$$
\TT_i \vph_m(\Gup(b)) \equiv \vph_{m+1}\bl \Ang{i^{\upve_i(b)}} \br
\vph_m(\Gup(\tscrS_i(b)) \mod q\hA_{\Z[q]} 
$$
for any $m \in \Z$.

Here we set
\eq
\tscrS_i\seteq \scrS_i\circ \te_i^{\max}\qtq[{and similarly we set}]
\tscrS_i^*\seteq \scrS_i^*\circ \te_i^{*\;\max}.\label{eq:tildS}
\eneq
Hence for $\bfb =(b_k)_{k\in\Z} \in \hB(\infty)$, Theorem~\ref{Thm: global basis}~\eqref{it: global (i)} says that
\begin{align*}
\TT_i(\rmG(\bfb)) & \equiv \oprod_{m\in \Z}      \vph_{m+1}\bl \Ang{i^{\upve_i(b_m)}}\br \vph_m \bl \Gup(\tscrS_i(b_m) \br \\
& \equiv \oprod_{m\in \Z} \vph_m\bl \Gup(\tscrS_i(b_m) \br \vph_{m}\bl \Ang{i^{\upve_i(b_{m-1})}}\br  \qmodLuphA.
\end{align*}

Since $\upve_i^*(\tscrS_i( b_m))=0$ by~\eqref{eq: saito reflection}, we have
$$
\Gup \bl \tscrS_i(b_m) \br   \Ang{i^{\upve_i(b_{m-1})}}  \equiv
\Gup \bl \tf_i^{* \upve_i(b_{m-1})}  \tscrS_i(b_m) \br \mod q\LupA.
$$

Thus we obtain the following theorem.

\begin{theorem} \label{thm: braid crystal}
For $i \in I$, the bases $\bfG$ and $\tbfG$ are invariant under the actions $\TT_i$ and $\TTiv{i}$. More precisely, for  
$\bfb =(b_k)_{k\in\Z} \in \hB(\infty)$, we have
$$
\TT_i \bl \rmG(\bfb) \br = \rmG(\bfb') \qtq \TTiv{i} \bl \rmG(\bfb) \br = \rmG(\bfb''),
$$
where $\bfb' =(b'_k)_{k\in\Z} \in \hB(\infty)$ and  $\bfb'' =(b''_k)_{k\in\Z} \in \hB(\infty)$ are given by
$$b_m' = \tf_i^{* \upve_i(b_{m-1})}  \tscrS_i(b_m) \qtq b_m'' = \tf_i^{\upve_i^*(b_{m+1}) }  \tscrS_i^*(b_m) $$ 
for each $m \in \Z$, respectively \rmo see \eqref{eq:tildS}\rmf.
\end{theorem}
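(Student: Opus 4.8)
The plan is to deduce the theorem from the uniqueness part of the global-basis characterization, Theorem~\ref{Thm: global basis}\;\eqref{it: global (iii)}: for $\bfc\in\hB(\infty)$, $\rmG(\bfc)$ is the unique $x\in\LuphA$ with $c(x)=x$ and $\bfc\equiv x\qmodLuphA$. Thus, to prove $\TT_i(\rmG(\bfb))=\rmG(\bfb')$ it suffices to verify that $x\seteq\TT_i(\rmG(\bfb))$ satisfies (a) $x\in\LuphA$, (b) $c(x)=x$, and (c) $x\equiv\bfb'\qmodLuphA$. Property~(a) is Lemma~\ref{lem: Ti LuphA}. Property~(b) follows from Lemma~\ref{lem: T_i and autos}\;\eqref{it: T_i and c} together with \eqref{eq: G and c}, since $c\bl\TT_i\rmG(\bfb)\br=\TT_i\bl c\,\rmG(\bfb)\br=\TT_i\rmG(\bfb)$. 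Property~(c) is precisely what the chain of congruences displayed just before the theorem delivers: combining \eqref{eq: Si in A}, the congruence \eqref{eq: b equiv}, Saito's reflection identity $\sfS_i\bl\Gup(b)\br=\Gup\bl\scrS_i(b)\br$, the $*$-analogue of \eqref{eq: b equiv} applied to the product $\Gup\bl\tscrS_i(b_m)\br\Ang{i^{\upve_i(b_{m-1})}}$ (legitimate because $\upve_i^*\bl\tscrS_i(b_m)\br=0$), and the definition~\eqref{eq: rmP} of $\rmP$, one obtains $\TT_i(\rmG(\bfb))\equiv\rmP(\bfb')\equiv\bfb'\qmodLuphA$ with $b_m'=\tf_i^{*\,\upve_i(b_{m-1})}\tscrS_i(b_m)$. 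Uniqueness then forces $\TT_i(\rmG(\bfb))=\rmG(\bfb')$.

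For $\TTiv{i}=\star\circ\TT_i\circ\star$ I would proceed in either of two equivalent ways. The first is to rerun the computation above with every ingredient replaced by its $\star$-counterpart: \eqref{eq: Si in A} has an $\sfS_i^*$-version, Saito's identity has a $\scrS_i^*$-version, and $\star$ interchanges $e_i'$ with $e_i^*$, so the same steps produce $b_m''=\tf_i^{\upve_i^*(b_{m+1})}\tscrS_i^*(b_m)$. The second is to bootstrap from the case just proved: by Lemma~\ref{lem: T_i and autos} and Proposition~\ref{prop: star on crystal}, $\TTiv{i}(\rmG(\bfb))=\bl\TT_i(\rmG(\bfb^\star))\br^{\star}=\rmG\bl((\bfb^\star)')^\star\br$, and it then remains to check the purely combinatorial identity $((\bfb^\star)')^\star=\bfb''$. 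This follows componentwise from the dictionary that $\star$ on $\hB(\infty)$ reverses the index and applies $*$ to each factor, that $*$ intertwines $\upve_i$ with $\upve_i^*$ and conjugates $\scrS_i$ and $\te_i^{\max}$ --- hence $\tscrS_i$ --- into $\scrS_i^*$ and $\te_i^{*\max}$ --- hence $\tscrS_i^*$ ---, and that $*$ conjugates $(\tf_i^*)^n$ into $\tf_i^{\,n}$; putting these together swaps the index shift $m-1$ with $m+1$. I expect this index bookkeeping in the $\TTiv{i}$ case to be the only mildly delicate point.

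Finally, the statements for the normalized basis $\tbfG$ are immediate once $\TT_i$ is seen to commute with $\sigma$. By \eqref{eq: c and sigma}, $\sigma(x)=q^{-N(\wt(x))/2}x$ with $N(\al)=(\al,\al)/2$; since $\wt(\TT_i x)=s_i\wt(x)$ by Lemma~\ref{lem: T_i and autos} and $(\cdot,\cdot)$ is $\weyl$-invariant, $N(s_i\al)=N(\al)$, whence $\sigma\circ\TT_i=\TT_i\circ\sigma$ and likewise $\sigma\circ\TTiv{i}=\TTiv{i}\circ\sigma$. Therefore $\TT_i(\trmG(\bfb))=\TT_i(\sigma\rmG(\bfb))=\sigma(\TT_i\rmG(\bfb))=\sigma(\rmG(\bfb'))=\trmG(\bfb')$ and similarly $\TTiv{i}(\trmG(\bfb))=\trmG(\bfb'')$; in particular $\bfG$ and $\tbfG$ are each stable under $\TT_i$ and $\TTiv{i}$.
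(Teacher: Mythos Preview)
Your proposal is correct and follows essentially the same approach as the paper: the paper's argument is exactly the chain of congruences displayed just before the theorem, and the conclusion $\TT_i(\rmG(\bfb))=\rmG(\bfb')$ is implicitly drawn via the uniqueness in Theorem~\ref{Thm: global basis}\;\eqref{it: global (iii)} together with Lemma~\ref{lem: Ti LuphA} and Lemma~\ref{lem: T_i and autos}\;\eqref{it: T_i and c}, precisely as you spell out. Your treatment of the $\TTiv{i}$ case (via $\star$) and of $\tbfG$ (via $\sigma\circ\TT_i=\TT_i\circ\sigma$ from $\weyl$-invariance of $N$) makes explicit details the paper leaves to the reader, but adds nothing new.
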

 
\subsection{Braid symmetries on extended crystals}
\label{subsec:braidecr}
Based on Theorem~\ref{thm: braid crystal}, for  $i \in I$,  we shall
define operators $\sfR_i$, $\sfR^\star_i $ on $\hB(\infty)$ 
which are introduced in~\cite{Park23}:
For $\bfb =(b_k)_{k\in\Z} \in \hB(\infty)$, 
\begin{equation}
\begin{aligned}
\sfR_i(\bfb) &\seteq (b'_k)_{k\in\Z}, \quad \ \text{by $b_k' = \tf_i^{* \upve_i(b_{k-1})}  \tscrS_i(b_k)$,} \\    
\sfR^\star_i(\bfb) &\seteq (b''_k)_{k\in\Z}, \quad \text{ by $b_k'' = \tf_i^{\upve^*_i(b_{k+1})}  \tscrS^*_i(b_k)$.}
\end{aligned}
\end{equation}
Hence Theorem~\ref{thm: braid crystal} can be rewritten as
\eq\TT_i\bl\rmG(\bfb)\br=\rmG\bl\sfR_i(\bfb)\br\qtq
\TTiv{i} \bl \rmG(\bfb) \br =\rmG\bl\sfR_i^\star(\bfb)\br.\label{eq:SaitoG}
\eneq
Hence $\sfR^\star_i $ is the inverse of $\sfR_i$, and
$\{ \sfR_i\}_{i\in I}$ and $ \{ \sfR^\star_i \}_{i\in I}$ satisfy the relations of $\ttB_\g$ since so do $\{ \TT_i,\TT_i^\star\}_{i \in I}$. 
Then we have the affirmative answer to the conjecture in~\cite[Introduction]{Park23} which is proved in~\cite{Park23} only for $\g$ of finite type. 

\begin{corollary} \label{cor: braid among R}
The operators $\{ \sfR_i , \sfR^\star_i \}_{i\in I}$ act on $\hB(\infty)$
satisfying the relations of the braid group $\ttB_\g$. Moreover $ \sfR_i$ and $ \sfR^\star_i$ are the inverse of each other.
\end{corollary}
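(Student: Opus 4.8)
The plan is to transport all the relevant structure through the bijection $\rmG\colon\hBi\isoto\bfG$ coming from Theorem~\ref{Thm: global basis}, using the two identities recorded in \eqref{eq:SaitoG}, namely $\TT_i\bl\rmG(\bfb)\br=\rmG\bl\sfR_i(\bfb)\br$ and $\TTiv{i}\bl\rmG(\bfb)\br=\rmG\bl\sfR^\star_i(\bfb)\br$. The point is that $\sfR_i$ (resp.\ $\sfR^\star_i$) is exactly the permutation of the index set $\hBi$ induced by the permutation $\TT_i$ (resp.\ $\TTiv{i}$) of the global basis $\bfG$; this makes sense precisely because $\TT_i$ preserves $\bfG$, which is Theorem~\ref{thm: braid crystal}. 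Once this dictionary is set up, every assertion about $\{\sfR_i,\sfR^\star_i\}_{i\in I}$ becomes the image under $\rmG$ of the corresponding assertion about $\{\TT_i,\TTiv{i}\}_{i\in I}$ proved in Theorem~\ref{thm: T-braid}.

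First I would check that $\sfR_i$ and $\sfR^\star_i$ are mutually inverse. Applying \eqref{eq:SaitoG} twice gives, for every $\bfb\in\hBi$,
$$\rmG\bl\sfR_i(\sfR^\star_i(\bfb))\br=\TT_i\bl\TTiv{i}(\rmG(\bfb))\br=\rmG(\bfb),$$
where the last equality is Theorem~\ref{thm: T-braid}(i); since $\rmG$ is injective, $\sfR_i\circ\sfR^\star_i=\id_{\hBi}$, and $\sfR^\star_i\circ\sfR_i=\id_{\hBi}$ follows symmetrically. In particular each $\sfR_i$ is a bijection of $\hBi$, so the assignment $r_i\mapsto\sfR_i$, $r_i^{-1}\mapsto\sfR^\star_i$ is compatible with the relations $r_ir_i^{-1}=r_i^{-1}r_i=1$ in \eqref{eq: braid relation}.

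Next I would verify the braid relations. Fix $i\neq j$ with $m_{i,j}<\infty$. Iterating \eqref{eq:SaitoG}, for every $\bfb\in\hBi$ we obtain
$$\rmG\Bigl(\underbrace{\sfR_i \circ \sfR_j \circ \cdots}_{m_{i,j}\text{-times}}(\bfb)\Bigr)
=\underbrace{\TT_i \circ \TT_j \circ \cdots}_{m_{i,j}\text{-times}}\bl\rmG(\bfb)\br
=\underbrace{\TT_j \circ \TT_i \circ \cdots}_{m_{i,j}\text{-times}}\bl\rmG(\bfb)\br
=\rmG\Bigl(\underbrace{\sfR_j \circ \sfR_i \circ \cdots}_{m_{i,j}\text{-times}}(\bfb)\Bigr),$$
the middle equality being the braid relation for $\{\TT_i\}_{i\in I}$ in Theorem~\ref{thm: T-braid}(ii). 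By injectivity of $\rmG$, $\underbrace{\sfR_i \circ \sfR_j \circ \cdots}_{m_{i,j}\text{-times}}=\underbrace{\sfR_j \circ \sfR_i \circ \cdots}_{m_{i,j}\text{-times}}$ on $\hBi$; the corresponding identity for $\{\sfR^\star_i\}_{i\in I}$ follows by inverting, or by the same computation with $\TTiv{i}$ in place of $\TT_i$. Combined with the previous paragraph, this shows that $r_i\mapsto\sfR_i$, $r_i^{-1}\mapsto\sfR^\star_i$ respects all defining relations \eqref{eq: braid relation} of $\ttB_\g$, hence defines an action of $\ttB_\g$ on $\hBi$.

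I do not expect a genuine obstacle here: the substantive work — that $\TT_i$ preserves the global basis and realizes $\sfR_i$ on the index set (Theorem~\ref{thm: braid crystal}), and that the $\TT_i$'s satisfy the braid relations (Theorem~\ref{thm: T-braid}) — is already in place, and only the bookkeeping with $\rmG$ remains. If instead one wanted a self-contained combinatorial argument, the difficult step would be to prove $\underbrace{\sfR_i \circ \sfR_j \circ \cdots}_{m_{i,j}\text{-times}}=\underbrace{\sfR_j \circ \sfR_i \circ \cdots}_{m_{i,j}\text{-times}}$ directly from the explicit formula $b'_k=\tf_i^{*\upve_i(b_{k-1})}\tscrS_i(b_k)$ — the finite-type case being the main theorem of \cite{Park23} — but transporting the relations through $\rmG$ as above circumvents this entirely.
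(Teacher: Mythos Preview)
Your proof is correct and is exactly the argument the paper has in mind: the paper's proof is the one-line ``direct consequence of Theorem~\ref{thm: braid crystal}'', and you have simply unpacked that by combining \eqref{eq:SaitoG} with the braid relations for $\{\TT_i\}_{i\in I}$ from Theorem~\ref{thm: T-braid} and the injectivity of $\rmG$.
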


\begin{proof}
The assertion is a direct consequence of Theorem~\ref{thm: braid crystal}. 
\end{proof}

From Corollary~\ref{cor: braid among R},  $\sfR_\ttb$ and $\sfR^\star_\ttb$ are well-defined for any $\ttb \in \ttB$.

\section{PBW-bases theory for $\hcalA$} 
In this section, we introduce the subalgebra $\hcalA(\ttb)$ for $\ttb \in\ttB^+$
and develop its
 PBW-basis theory using the braid symmetries $\{ \TT_i \}_{\in \in I}$. This algebra can be understood as a bosonic-analogue of $A_q(\n(w))$ of $A_q(\n)$ associated with an element
$w$ of the Weyl group $\weyl$. Then we will show that there exist transition maps between PBW bases and global basis of $\hcalA(\ttb)$ satisfying the
unitriangularity.

For the simplicity of notation, we write  
\eqn
\hcalA_{\Qqh}\seteq\hA.
\eneqn

Thus we have defined $\hA_B\subset\hA$ for
$B=\Qqh,\Zq,\Z[q]$.
The subspace $\hA_B$ is a $B$-subalgebra for $B=\Qqh,\Zq$,
but $\hAl$ is only a $\Z[q]$-submodule of $\hA$.
Note that $\hA_B$ is stable by $\TT_i^{\pm1}$.

We set $\hAB{[a,b]}\seteq\hAB\cap\hA[a,b]$ and
similarly for $\hABp$ and $\hABp[\le m]$.
The multiplication gives an isomorphism
$$\hAB[b,c]\tens_B\hAB[a,b-1]\isoto\hAB[a,b]\quad \text{for $a\le b\le c$.}$$
The $B$-module $\hA_B[a,b]$
has $\bfG[a,b]\seteq\bfG\cap\hA[a,b]$ as a basis.

\subsection{Subalgebras $\hcalA(\ttb)$ and their PBW bases} \label{subsec: PBW}
Note that any sequence $\ii=(i_1,\ldots,i_r)$ corresponds to an element
$\ttb\in \ttB^+$ given by $\ttb= r_{i_1} \cdots r_{i_r}$. 
In this case, we call $\ii$ a \emph{sequence} of $\ttb$ and denote by $\Seq(\ttb)$ the set of all sequences of $\ttb$. 
For a sequence $\ii=(i_1,\ldots,i_r) \in \Seq(\ttb)$, we set
\eq \label{eq: cuspidal}
\ttP^\ii_k = \TT_{i_1}\ldots \TT_{i_{k-1}} \vph_0(\Ang{i_k}) \quad 1 \le k\le r,
\eneq
and call it the \emph{cuspidal element} of $\ii$ at $k$.
For $m\in\Z_{\ge0}$, we set
\eq
\diPP{\ii}{m}{k}
\seteq\TT_{i_1}\ldots \TT_{i_{k-1}} \vph_0(\Ang{i_k^m})
=q^{m(m-1)/2}_{i_k}(\ttP^\ii_k )^m.
\label{def:cuspp}
\eneq
When there is no danger of confusion, we drop the superscript
${}^\ii$ for the simplicity of  notation.

\smallskip
Theorem~\ref{thm: T-braid} says that
$\TT_\ttb$ and $\TTiv{\ttb}$ are well-defined
for any element $\ttb \in \ttB^+$. 

\begin{definition}  \label{def: hatA(b)}
For $\ttb\in \ttB^+$, 
we define the $\bfk$-subalgebra of $\hA$ by
$$
\hcalA(\ttb) = (\hcalA)_{\ge 0} \cap \TT_\ttb(\hA_{<0}), 
$$
and a $B$-submodule $\hAB(\ttb)\seteq\hA(\ttb)\cap\hAB$ for $B=\Qqh,\Zq,\Z[q]$.
\end{definition}

We shall prove that the subalgebra $\hcalA(\ttb)$ is generated by the elements $\{ \ttPi{k}\}_{1\le k \le r}$ for \emph{any} $\ii \in \Seq(\ttb)$.  
Note that
\eq
\TT_\ttb(\hAB[a,b])=\TT_\ttb(\hA[a,b])\cap\hAB.
\eneq
Indeed,
we have \eqn
(\TT_\ttb)^{-1}\bl\TT_\ttb(\hA[a,b])\cap\hAB\br
=\hA[a,b]\cap\TT_\ttb^{-1}(\hAB)=
\hAB[a,b].
\eneqn

Let us remark the following elementary lemma, which is used frequently.
\Lemma\label{lem:int}
Let $C$ be a ring,
and let $X'\subset X$ be right $C$-modules, and let $Y'\subset Y$ be left
$C$-modules.
Assume that $X,Y$ are flat and that either
$X/X'$ or $Y/Y'$ is flat.
Then we have
\bnum
\item
$ \xymatrix{
  X'\tens_C Y'\akete[-.7ex]\ar@{^{(}->}[r]\ar@{^{(}->}[d]&X'\tens_CY
  \akete[-.7ex]\ar@{^{(}->}[d]\\
    X\tens_C Y'\ar@{^{(}->}[r]&X\tens_CY,}
  $

  \vs{.5ex}
  \item $(X\tens_C Y')\cap(X'\tens_CY)=X'\tens_C Y'$.
  \ee
  \enlemma
  \Proof
  Assume that $X/X'$ is flat for example.
  Then we have a commutative diagram with exact rows and exact columns: 
  $$\xymatrix@C=3ex@R=3ex{
    &&0\ar[d]&0\ar[d]\\
    0\ar[r]&X'\tens_CY'\ar[r]\ar[d]&X\tens_CY'\ar[r]\ar[d]
    &(X/X')\tens_CY'\ar[r]\ar[d]&0\\
        0\ar[r]&X'\tens_CY\ar[r]\ar[d]&X\tens_CY\ar[r]\ar[d]
        &(X/X')\tens_CY\ar[r]\ar[d]&0\\
               &X'\tens_C(Y/Y')\ar[r]\ar[d]&X\tens_C(Y/Y')\ar[r]\ar[d]
    &(X/X')\tens_C(Y/Y')\ar[r]\ar[d]&0,\\
    &0&0&0&}
  $$\
  which implies the desired result.
  \QED

For $i\in I$, set
\eq
&&
\ba{l}(\hAB[0]){[i]}=\stt{x\in\hAB[0]\bigm|\rmE_{i, 0}(x)=0}\qtq\\
(\hAB[0]){[i]}^{\star}=\stt{x\in\hAB[0]\bigm|\Es_{i,0}(x)=0}.
\ea
  \eneq
  Then by \eqref{eq:Si} and \cite[Lemma 5.1]{KKOP24}, $\TT_i$ induces an isomorphism
  $$\TT_i\cl(\hAB[0]){[i]}\isoto(\hAB[0]){[i]}^{\star}.$$
  
In the rest of this subsection, $B=\Qqh$, $\Zq$, or $\Z[q]$.
\Lemma\label{lem:TTI0}
For any $i\in I$, we have
\eqn
&&\ba{l}
\TT_i(\hAB[0])=\Bigl(
\sum_{0\le k}B\vphi_1(\ang{i^k})\Bigr)\cdot(\hAB[0]){[i]}^{\star},\\
\TT_i^{-1}(\hAB[0])=
(\hAB[0]){[i]}\cdot\Bigl(\sum_{0\le k}B\vphi_{-1}(\ang{i^k})\Bigr),\\
\hA[1]\cap\TT_i(\hAB[0])=\sum_{0\le k}B\vphi_1(\ang{i^k}),\\
\hA[0]\cap\TT_i(\hAB[0])=(\hAB[0]){[i]}^{\star},\\
\hA[-1]\cap\TT_i^{-1}(\hAB[0])=\sum_{0\le k}B\vphi_{-1}(\ang{i^k}),\\
\hA[0]\cap\TT_i^{-1}(\hAB[0])=(\hAB[0]){[i]}.\ea
  \eneqn
  \enlemma
\Proof
 Set $S=\sum_{k\ge0}\Z[q]\ang{i^k}$. Then we have
  $S\cdot\LupA[i]=\LupA$ by Corollary~\ref{cor:iseries}.
  Hence we have
  $$\TT_i(\vphi_0(\LupA))
  =\TT_i\bl\vphi_0(S\cdot\LupA[i])\br=
  \vphi_1(S)\cdot(\hA_{\Z[q]}[0]){[i]}^{\star}.$$
  Hence the first equality is obtained when $B=\Z[q]$.
  We can obtain the first equality in the general case by
  applying $B\tens_{\Z[q]}\scbul$.
  We can obtain the second equality from the first by applying $\star$.
  The other identity easily follows from them.
  \QED
  \Lemma\label{lem:TTIab}
  For any $a, b\in\Z$ with $a\le b$, we have
  \eqn\TT_i(\hAB[a,b])&&=\bl\hAB[b+1]\cap\TT_i(\hAB[b])\br
  \cdot\hAB[a+1,b]\cdot \bl\hAB[a]\cap\TT_i(\hAB[a])\br,\\
 \TT_i^{-1}(\hAB[a,b])&&=\bl\hAB[b]\cap\TT_i^{-1}(\hAB[b])\br
  \cdot\hAB[a,b-1]\cdot \bl\hAB[a-1]\cap\TT_i^{-1}(\hAB[a])\br.\eneqn
  Note that we understand $\hAB[m,p]=B$ if $m>p$.
  \enlemma
  \Proof
We shall prove only the first equality, 
since the second can be obtained from the first by applying $\star$. 
  We argue by induction on $b-a$.
  If $b-a=0$ it follows from Lemma~\ref{lem:TTI0}.
  If $b>a$, then we have
  \eqn
  \TT_i(\hAB[a,b])&&=\TT_i(\hAB[b])\cdot\TT_i(\hAB[a,b-1])\\
  &&=\bl\hAB[b+1]\cap\TT_i(\hAB[b])\br\cdot
  \bl\hAB[b]\cap\TT_i(\hAB[b])\br
  \cdot\bl\hAB[b]\cap\TT_i(\hAB[b-1])\br\\
  &&\hs{10ex}\cdot\hAB[a+1,b-1]\cdot
 \bl\hAB[a]\cap\TT_i(\hAB[a])\br.
 \eneqn
 Since we have
 \eqn
&& \bl\hAB[b]\cap\TT_i(\hAB[b])\br\cdot\bl\hAB[b]\cap\TT_i(\hAB[b-1])\br\\
&&\hs{10ex} =\TT_i\Bigl(\bl\hAB[b]\cap\TT_i^{-1}(\hAB[b])\br
\cdot\bl\hAB[b-1]\cap\TT_i^{-1}(\hAB[b])\br\Bigr)\\
&&\hs{10ex} =\TT_i\TT_i^{-1}(\hAB[b])=\hAB[b],
\eneqn
we obtain
\eqn
\TT_i(\hAB[a,b])
  &&=\bl\hAB[b+1]\cap\TT_i(\hAB[b])\br\cdot
  \hAB[b]\cdot\hAB[a+1,b-1]\cdot
  \bl\hAB[a]\cap\TT_i(\hAB[a])\br\\
 & &=\bl\hAB[b+1]\cap\TT_i(\hAB[b])\br\cdot
  \hAB[a+1,b]\cdot
  \bl\hAB[a]\cap\TT_i(\hAB[a])\br. \qedhere
  \eneqn
  \QED
  \Rem
    In Lemma~\ref{lem:TTIab}, we can replace $\cdot$ with $\tens_B$.
  Namely, the multiplication induces an isomorphism
$$\bl\hAB[b+1]\cap\TT_i(\hAB[b])\br
\tens_B\hAB[a+1,b]\tens_B\bl\hAB[a]\cap\TT_i(\hAB[a])\br
\isoto \TT_i(\hAB[a,b]),$$  
etc. The same remark can be applied to the lemma and proposition below.
\enrem
\Lemma
For any $a, b\in\Z\cup\stt{\infty,-\infty}$ with $a\le b$ and $m\in\Z$, we have
\eqn
\TT_i^{\pm1}(\hAB[a,b])&&=\bl\hA_{\ge m}\cap\TT_i^{\pm1}(\hAB[a,b])\br
\cdot\bl\hA_{<m}\cap\TT_i^{\pm1}(\hAB[a,b])\br\qtq\\
\hAB[a,b]&&=\bl\hAB[a,b]\cap\TT_i^{\pm1}\hA_{\ge m}\br
\cdot\bl\hAB[a,b]\cap\TT_i^{\pm1}(\hA_{<m})\br.
\eneqn
\enlemma
\Proof
The first equality easily follows from the preceding lemma,
since for any sequence $\stt{K_k}_{k\in[a,b]}$ of $B$-submodules of $\hAB[k]$
such that $\hAB[k]/K_k$ is flat,
setting $K=\oprod_{k\in[a,b]}K_k$, we have
$$K=(\hA_{\ge m}\cap K)\cdot(\hA_{<m}\cap K).$$
The second is obtained from the first by the
application of $\TT_i^{\mp1}$.
\QED

\Prop\label{prop: hA(b) span}
  Let $\ttb\in\ttB^+$, $\ii =(i_1,\ldots, i_r) \in \Seq(\ttb)$
  and $B=\Qqh,\Zq,\Z[q]$.
\bnum
\item \label{it: step 1 hA(b)}
  For any $i \in I$, we have
  $\TT_i((\hcalA_B)_{<0}) = \bl\sum_{k\ge0}B\vphi_0(\ang{i^k})\br\cdot (\hcalA_B)_{<0}$.
\item \label{it: step 2 hA(b)}
  $\TT_\ttb\bl\hABp[{<0}]\br =
  \Bigl(\TT_\ttb\bl\hABp[{<0}]\br\cap \hABp[\ge0]\Bigr)\cdot\hABp[{<0}]$.
\item \label{it: step 3 hA(b)}
  $\TT_\ttb(\hABp[{<0}]) \cap \hABp[\ge0]= \oprod_{1\le k\le r}\bl\sum_{m\ge0}B\ttP^{\ii,(m)}_k\br$.
\ee
\enprop

\begin{proof}

\snoi
\eqref{it: step 1 hA(b)} follows from Lemma~\ref{lem:TTI0}
and Lemma~\ref{lem:TTIab}.

\snoi
\eqref{it: step 2 hA(b)}--\eqref{it: step 3 hA(b)}  
We shall first show
\eq
\TT_{\ttb }(\hABp[<0]) =
\Bigl(\oprod_{1\le k\le r}\bl\sum_{m\ge0}B  \diPP{\ii}{m}{k}  \br\Bigr)\cdot\hABp[<0]
\eneq
by induction on $r$. Set $\ii'=(i_2,\ldots,i_r)$ and $\ttb'=r_{i_2}\cdots r_{i_r} \in \ttB$.
Set
$$
T=\oprod_{1\le k\le r}\bl\sum_{m\ge0}B\diPP{\ii}{m}{k}\br\qtq
T'=\oprod_{1\le k\le r-1}\bl\sum_{m\ge0}B\diPP{\ii'}{m}{k}\br.$$
Then we have
$$
\TT_{\ttb'}(\hABp[<0]) = T'\cdot \hABp[{<0}]
$$
by the induction hypothesis.
Since $T=\bl\TTi{i_1}T'\br\cdot \bl\sum_{k\ge0}B\,\vphi_0(\ang{i_1^k})\br$, we have

\eqn
\TT_{\ttb}(\hABp[{<0}]) &&  = \TTi{i_1}\TTi{\ttb'}(\hABp[{<0}])  
 = \TTi{i_1}(T'\cdot\hABp[{<0}])\\
&&= \TTi{i_1}(T')\cdot\bl\sum_{k\ge0}B\,\vphi_0(\ang{i_1^k})\br\cdot\hABp[{<0}]
  =  T\cdot\hABp[{<0}].
  \eneqn

\noindent
The equality $\TT_{\ttb}\hABp[{<0}])\cap\hABp[{ \ge 0}]=T$
 follows from
$\hAB\simeq \hABp[{\ge0}] \tens \hABp[{<0}]$, Lemma~\ref{lem:int} and the fact that
$\hABp[<0]/B$ is a flat $B$-module.
\QED

Note that we have
\eq
E_{i,m}(f_{i,m}^k)=\Es_{i,m}(f_{i,m}^k) = (1-q_i^{2k})f_{i,m}^{k-1}. 
\label{eq:Ef}
\eneq

Recall $\Ang{i^n}$ in~\eqref{eq: Ang i n}. Then we have 
$\sigma(\vph_m(\Ang{i^n})) = f_{i,m}^n$ and \eqref{eq:Ef} implies that
$$
\hAform{f_{i,m}^n,f_{i,m}^n} = \hAform{ \sigma(\vph_m(\Ang{i^n})) ,\sigma(\vph_m(\Ang{i^n})) } = \pair{\vph_m(\Ang{i^n}),\vph_m(\Ang{i^n})} =  \prod_{k=1}^n (1-q_i^{2k}).
$$

For a sequence $\ii =(i_1,\ldots,i_r) \in \Seq(\ttb)$ and $\bsu=(u_1,\ldots,u_r) \in \Z_{\ge0}^r$, we define the \emph{PBW-element}  
\begin{align} \label{eq: PBW element}
\ttP^\ii(\bsu)  \seteq \oprod_{k \in [1,r]} \diPP{\ii}{u_k}{k}. 
\end{align}
Here $ \diPP{\ii}{n}{k}$ is defined in \eqref{def:cuspp}.
Note that $\diPP{\ii}{n}{k}$ is $c$-invariant, since $\vph_0(\Ang{i_k^{n}})$ is $c$-invariant and $\TT_i$ preserves 
$c$-invariant elements by Lemma~\ref{lem: T_i and autos}~\eqref{it: T_i and c}. 

\begin{proposition} \label{prop: orthogonality}
For a sequence $\ii =(i_1,\ldots,i_r) \in \Seq(\ttb)$ and $\bsu,\bsv \in \Z_{\ge0}^r$, we have
\begin{align}\label{eq: pairing PBW}
\pair{\ttP^\ii(\bsu),\ttP^\ii(\bsv)}  = \delta_{\bsu,\bsv} \prod_{k=1}^r \pair{ \vph_0(\Ang{i_k^{u_k}}),\vph_0(\Ang{i_k^{v_k}}) } 
= \delta_{\bsu,\bsv} \prod_{k=1}^r \prod_{s=1}^{u_k} (1-q_{i_k}^{2s}).  
\end{align}
\end{proposition}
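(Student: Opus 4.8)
The plan is to reduce the computation to the orthogonality of the PBW-type factors copy-by-copy, using the $\TT_i$-invariance of the pairing established in Corollary~\ref{Cor:invTT} together with the multiplicativity formula for $\pair{\ ,\ }$ across the factors $\hcalA[k]$ in Proposition~\ref{prop: pair}~\eqref{it: pair (ii)}.

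\medskip
First I would fix a reduced-type sequence $\ii=(i_1,\ldots,i_r)\in\Seq(\ttb)$ and write $\ttb_k=r_{i_1}\cdots r_{i_k}$ for each $k$. Recall from Proposition~\ref{prop: hA(b) span}~\eqref{it: step 3 hA(b)} and its proof that the cuspidal element $\ttP^\ii_k=\TT_{\ttb_{k-1}}\vph_0(\ang{i_k})$ lies in $\hcalA[1,k]$ more precisely in $\hcalA_{\ge0}\cap\TT_{\ttb}(\hcalA_{<0})$, and that the ordered product $\ttP^\ii(\bsu)=\oprod_{k\in[1,r]}\diPP{\ii}{u_k}{k}$ decomposes along the tensor decomposition $\hcalA_{\ge0}\simeq\oprod_{k}\hcalA[k]$. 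The key structural point is that under $\TT_{\ttb_{k-1}}$, the factor $\diPP{\ii}{u_k}{k}=\TT_{\ttb_{k-1}}\vph_0(\ang{i_k^{u_k}})$, which lives in $\vph_0(\Azn)\subset\hcalA[0]$ before applying $\TT_{\ttb_{k-1}}$, gets mapped into a single copy $\hcalA[m_k]$ — no, rather it spreads out, so the cleaner route is to apply $\TT_{\ttb}^{-1}$ to both arguments.

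\medskip
So the main step I would carry out is: apply $\TT_{\ttb}^{-1}$ (equivalently $\TTiv{\ttb}$ up to $\star$, but $\TT_\ttb$ itself has an inverse by Theorem~\ref{thm: T-braid}) and use Corollary~\ref{Cor:invTT} to get
$$\pair{\ttP^\ii(\bsu),\ttP^\ii(\bsv)}=\pair{\TT_\ttb^{-1}\ttP^\ii(\bsu),\TT_\ttb^{-1}\ttP^\ii(\bsv)}.$$
Now $\TT_\ttb^{-1}\diPP{\ii}{u_k}{k}=\TT_{i_r}^{-1}\cdots\TT_{i_{k+1}}^{-1}\vph_0(\ang{i_k^{u_k}})$, and by Lemma~\ref{lem: T_i and autos} (the $\TTiv{i}\hA[a,b]\subset\hA[a-1,b]$ statement applied to the inverses) each $\TT_{i_j}^{-1}$ with $j>k$ shifts the copy-index downward, so $\TT_\ttb^{-1}\diPP{\ii}{u_k}{k}$ lands in $\hcalA[-(r-k),0]$ with its "$\hcalA[-(r-k)]$-component" being exactly a power of $f_{i_k,\,-(r-k)}$ up to a scalar; more precisely, by the same induction that proves Proposition~\ref{prop: hA(b) span}, $\TT_\ttb^{-1}\ttP^\ii(\bsu)$ is, copy by copy, a product of the elements $\vph_{-(r-k)}(\ang{i_k^{u_k}})$ placed in distinct copies $\hcalA[-(r-k)]$, $k=1,\ldots,r$ (after rearranging using that elements of different copies $q$-commute, picking up only a scalar which is harmless for the $q=0$ statement — actually for the exact formula one must track it). Then Proposition~\ref{prop: pair}~\eqref{it: pair (ii)} gives
$$\pair{\TT_\ttb^{-1}\ttP^\ii(\bsu),\TT_\ttb^{-1}\ttP^\ii(\bsv)}=\prod_{k=1}^r\pair{\vph_{-(r-k)}(\ang{i_k^{u_k}}),\vph_{-(r-k)}(\ang{i_k^{v_k}})},$$
which vanishes unless $u_k=v_k$ for all $k$ by the weight orthogonality of $\pair{\ ,\ }$ (Theorem~\ref{thm: hAform}~(d) / Proposition~\ref{prop: pair}), giving $\delta_{\bsu,\bsv}$. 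Finally, $\pair{\vph_m(\ang{i^n}),\vph_m(\ang{i^n})}=\pair{\sigma\vph_m\ang{i^n},\sigma\vph_m\ang{i^n}}$ — wait, that's $\hAform{\,,}$; by Proposition~\ref{prop: pair}~\eqref{it: pair (iii)} this equals $\aform{\ang{i^n},\ang{i^n}}$, and since $\sigma(\vph_m(\ang{i^n}))=f_{i,m}^n$ (noted just before Proposition~\ref{prop: orthogonality}), one computes $\hAform{f_{i,m}^n,f_{i,m}^n}=\prod_{k=1}^n(1-q_i^{2k})$ directly from the iterated formula~\eqref{eq:Ef} and Theorem~\ref{thm: hAform}~(b), then divides by the appropriate power of $q$ — but actually $\pair{\vph_0(\ang{i_k^{u_k}}),\vph_0(\ang{i_k^{u_k}})}=\aform{\ang{i_k^{u_k}},\ang{i_k^{u_k}}}$ is independent of the copy, so the scalars from rearranging cancel in the diagonal case. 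This yields the stated product $\prod_{k=1}^r\prod_{s=1}^{u_k}(1-q_{i_k}^{2s})$.

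\medskip
The main obstacle I anticipate is the bookkeeping in the middle step: carefully showing that $\TT_\ttb^{-1}\ttP^\ii(\bsu)$, which is an \emph{ordered product} of elements each living in an interval of copies, can be reorganized so that Proposition~\ref{prop: pair}~\eqref{it: pair (ii)} applies cleanly — i.e., identifying its image in $\oprod_k\hcalA[-(r-k)]$ under the tensor decomposition and checking that the only surviving component in each copy $\hcalA[-(r-k)]$ is the power of $f_{i_k}$ and not some lower-weight correction. This is exactly the content hidden in Proposition~\ref{prop: hA(b) span}~\eqref{it: step 3 hA(b)}: $\diPP{\ii}{m}{k}$ is, modulo the copies strictly to the right, equal to $\vph_{?}(\ang{i_k^m})$ in its leading copy, and one must invoke the $q$-commutation relations~\eqref{it: def of hA (b)} to move the various $f_{i_k,\,-(r-k)}$-powers past one another, producing only an overall power of $q$ which, being the same for $\bsu$ and $\bsv$ when $\bsu=\bsv$, drops out. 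An alternative cleaner bookkeeping is to induct on $r$ directly: peel off $\diPP{\ii}{u_1}{1}=\vph_0(\ang{i_1^{u_1}})\in\hcalA[0]$ and $\diPP{\ii}{v_1}{1}\in\hcalA[0]$, use that the rest lies in $\hcalA_{\ge1}$ after noting $\TT_{i_1}$ raises indices, apply Proposition~\ref{prop: pair}~\eqref{it: pair (ii)} to split off the $\hcalA[0]$-factor, then apply $\TT_{i_1}^{-1}$ and $\TT_{i_1}$-invariance of $\pair{\ ,\ }$ to the remaining factor, reducing to the sequence $\ii'=(i_2,\ldots,i_r)$ and the element $\ttb'$; the base case $r=0$ (or $r=1$) is the single-copy computation above. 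I would probably present the inductive version, since it avoids the global rearrangement and reuses the structure of the proof of Proposition~\ref{prop: hA(b) span} verbatim.
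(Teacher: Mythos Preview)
Your inductive approach at the end is the right idea and is essentially what the paper does, but you have the order of operations reversed in the key step, and one of your claims is false. You write: ``use that the rest lies in $\hcalA_{\ge1}$ after noting $\TT_{i_1}$ raises indices''. This is not true: $\TT_{i_1}(f_{j,0})\in\hcalA[0]$ for $j\neq i_1$ by the defining formula~\eqref{eq: T_i}, so $\TT_{i_1}(\ttP^{\ii'}(\bsu'))$ lies in $\hcalA_{\ge0}$ but not in $\hcalA_{\ge1}$ in general. Hence you cannot split off the $\hcalA[0]$-factor $\vph_0(\ang{i_1^{u_1}})$ from $\TT_{i_1}(\ttP^{\ii'}(\bsu'))$ via Proposition~\ref{prop: pair}~\eqref{it: pair (ii)}, since both pieces have nontrivial $\hcalA[0]$-components.

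The paper's fix is to apply $\TT_{i_1}^{-1}$ \emph{before} splitting rather than after: by Corollary~\ref{Cor:invTT},
\[
\pair{\ttP^\ii(\bsu),\ttP^\ii(\bsv)}
=\pair{\ttP^{\ii'}(\bsu')\,\vph_{-1}(\ang{i_1^{u_1}}),\;\ttP^{\ii'}(\bsv')\,\vph_{-1}(\ang{i_1^{v_1}})}.
\]
Now $\ttP^{\ii'}(\bsu'),\ttP^{\ii'}(\bsv')\in\hcalA_{\ge0}$ while $\vph_{-1}(\ang{i_1^{u_1}}),\vph_{-1}(\ang{i_1^{v_1}})\in\hcalA[-1]$, so the two factors live in disjoint ranges of copies and Proposition~\ref{prop: pair}~\eqref{it: pair (ii)} applies directly, yielding the product $\pair{\ttP^{\ii'}(\bsu'),\ttP^{\ii'}(\bsv')}\cdot\pair{\vph_{-1}(\ang{i_1^{u_1}}),\vph_{-1}(\ang{i_1^{v_1}})}$ with no $q$-commutation bookkeeping whatsoever. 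The induction then closes immediately. Your first (global $\TT_\ttb^{-1}$) approach runs into exactly the same overlap problem at every layer simultaneously, which is why the rearrangement you worried about does not become clean; the paper's peel-one-layer version sidesteps it entirely.
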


\begin{proof}
We argue by induction on $r$. Set $\ii'=(i_2,\ldots,i_r)$, $\bsu' = (u_2,\ldots,u_r)$ and $\bsv' = (v_2,\ldots,v_r)$. Then we have
\begin{align*}
\ttP^\ii(\bsu) = \left(   \TTi{i_1}(\ttP^{\ii'}(\bsu')) \right) \vph_0(\Ang{i_1^{u_1}}) \qtq
\ttP^\ii(\bsv) = \left(   \TTi{i_1}(\ttP^{\ii'}(\bsv')) \right) \vph_0(\Ang{i_1^{v_1}}).
\end{align*}
Hence we have
\begin{align*}
 \pair{\ttP^\ii(\bsu),\ttP^\ii(\bsv)}   & = \Bpair{\left(   \TTi{i_1}(\ttP^{\ii'}(\bsu')) \right) \vph_0(\Ang{i_1^{u_1}}),\left(   \TTi{i_1}(\ttP^{\ii'}(\bsv')) \right) \vph_0(\Ang{i_1^{v_1}}) } \allowdisplaybreaks\\
  & = \pair{ \ttP^{\ii'}(\bsu')   \TTi{i_1}^{-1} \big( \vph_0(\Ang{i_1^{u_1}}) \big) ,   \ttP^{\ii'}(\bsv')  \TTi{i_1}^{-1} \big( \vph_0(\Ang{i_1^{v_1}}) \big) } 
  \allowdisplaybreaks\\
  & = \pair{ \ttP^{\ii'}(\bsu')     \vph_{-1}(\Ang{i_1^{u_1}})   ,   \ttP^{\ii'}(\bsv')   \vph_{-1}(\Ang{i_1^{v_1}})  } 
  \allowdisplaybreaks\\
  & \underset{*}{=} \pair{ \ttP^{\ii'}(\bsu') , \ttP^{\ii'}(\bsv') } \pair{    \vph_{-1}(\Ang{i_1^{u_1}})   ,     \vph_{-1}(\Ang{i_1^{v_1}})  },
\end{align*}
which implies the desired result. Here $\underset{*}{=}$ follows from the fact that $\ttP^{\ii'}(\bsu'),\ttP^{\ii'}(\bsv') \in \hcalA_{\ge0}$. 
\end{proof}

\begin{corollary} \label{cor: P_i k-basis}
Let $\ttb$ be an element in the braid monoid $\ttB^+$ with $\ell(\ttb)=r$. Then, for any $\ii \in \Seq(\ttb)$, and $B=\Qqh,\Zq,\Z[q]$, the set 
$$
\bfP_\ii \seteq \{ \ttP^\ii(\bsu) \ |  \ \bsu \in \Z_{\ge0}^r \} \ \text{ forms a $B$-basis of $\hAB(\ttb)$.}
$$
\end{corollary}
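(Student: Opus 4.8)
The plan is to deduce the statement from two results already in hand: Proposition~\ref{prop: hA(b) span}\,\eqref{it: step 3 hA(b)}, which will supply the spanning, and Proposition~\ref{prop: orthogonality}, which will supply the linear independence. First I would rewrite $\hAB(\ttb)$ in the form that appears in Proposition~\ref{prop: hA(b) span}. Unwinding Definition~\ref{def: hatA(b)},
\[
\hAB(\ttb)=\hcalA(\ttb)\cap\hAB
=\bl\hcalA_{\ge0}\cap\hAB\br\cap\bl\TT_\ttb(\hcalA_{<0})\cap\hAB\br
=\hABp[\ge0]\cap\bl\TT_\ttb(\hcalA_{<0})\cap\hAB\br,
\]
and $\TT_\ttb(\hcalA_{<0})\cap\hAB=\TT_\ttb(\hABp[<0])$ because $\hAB$ is $\TT_\ttb^{\pm1}$-stable, so that $(\TT_\ttb)^{-1}\bl\TT_\ttb(\hcalA_{<0})\cap\hAB\br=\hcalA_{<0}\cap\hAB=\hABp[<0]$; this is exactly the intersection identity recorded just after Definition~\ref{def: hatA(b)}, applied to the half-infinite interval $\hcalA_{<0}=\hcalA[-\infty,-1]$. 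Hence $\hAB(\ttb)=\TT_\ttb(\hABp[<0])\cap\hABp[\ge0]$, and Proposition~\ref{prop: hA(b) span}\,\eqref{it: step 3 hA(b)} yields
\[
\hAB(\ttb)=\oprod_{1\le k\le r}\Bigl(\sum_{m\ge0}B\,\diPP{\ii}{m}{k}\Bigr).
\]
Since a product of $B$-submodules is the $B$-span of the products of their elements, the right-hand side coincides with $\sum_{\bsu\in\Z_{\ge0}^{r}}B\,\ttP^\ii(\bsu)$ by~\eqref{eq: PBW element}; in particular $\bfP_\ii\subset\hAB(\ttb)$ and $\bfP_\ii$ generates $\hAB(\ttb)$ over $B$.

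It then remains to prove that $\bfP_\ii$ is $B$-linearly independent, and here it suffices to treat $B=\Qqh$, since $\Zq$ and $\Z[q]$ are subrings of $\Qqh$. Given a relation $\sum_{\bsu}c_{\bsu}\,\ttP^\ii(\bsu)=0$ with $c_{\bsu}\in\Qqh$ almost all zero, I would pair both sides with $\ttP^\ii(\bsv)$ and invoke Proposition~\ref{prop: orthogonality} to get
\[
c_{\bsv}\,\prod_{k=1}^{r}\prod_{s=1}^{v_k}\bl 1-q_{i_k}^{2s}\br=0 .
\]
As $\prod_{k=1}^{r}\prod_{s=1}^{v_k}(1-q_{i_k}^{2s})$ is a nonzero element of the field $\Qqh$, this forces $c_{\bsv}=0$ for every $\bsv$. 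Thus $\bfP_\ii$ is $\Qqh$-linearly independent and, a fortiori, $B$-linearly independent; combined with the spanning from the first paragraph, $\bfP_\ii$ is a $B$-basis of $\hAB(\ttb)$ for each $B=\Qqh,\Zq,\Z[q]$.

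I do not expect a serious obstacle here: the real content of the section is already carried by Propositions~\ref{prop: hA(b) span} and~\ref{prop: orthogonality}, and this corollary is essentially a repackaging. The one place that warrants a word of care is the opening reduction $\hAB(\ttb)=\TT_\ttb(\hABp[<0])\cap\hABp[\ge0]$, i.e.\ that the intersection identity $\TT_\ttb(\hcalA[a,b])\cap\hAB=\TT_\ttb(\hAB[a,b])$ stated for $a,b\in\Z$ remains valid for the half-infinite interval $\hcalA_{<0}$; but this is immediate, since the argument for it uses only the $\TT_\ttb$-stability of $\hAB$ and the identity $\hcalA[a,b]\cap\hAB=\hAB[a,b]$, both of which hold for $a\in\Z\cup\{-\infty\}$ and $b\in\Z\cup\{\infty\}$.
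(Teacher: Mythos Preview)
Your proof is correct and follows exactly the same approach as the paper: spanning from Proposition~\ref{prop: hA(b) span}\,\eqref{it: step 3 hA(b)} and linear independence from the orthogonality in Proposition~\ref{prop: orthogonality}. The paper's proof is just a two-sentence version of yours, leaving the reduction $\hAB(\ttb)=\TT_\ttb(\hABp[<0])\cap\hABp[\ge0]$ and the pairing argument implicit.
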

\begin{proof}
  By Proposition~\ref{prop: hA(b) span}~\eqref{it: step 3 hA(b)}, $\bfP_\ii$ spans
  $\hAB(\ttb)$. Then Proposition~\ref{prop: orthogonality} implies the assertion.      
\end{proof}

We call $\bfP_\ii$ the \emph{PBW-basis of $\hcalA(\ttb)$ associated with $\ii \in \Seq(\ttb)$}.

\subsection{PBW-bases and global bases} \label{subsec: PBW and global}
In this subsection, we investigate the relationship between the global bases and PBW-bases of subalgebras of $\hcalA$.
Let $B=\Qqh,\Zq,\Z[q]$.
Since $\hAB$ and $\TT_{\ttb}(\hAB)$ have a $B$-basis
$\bfG\cap \hAB$ and $\bfG\cap \TT_{\ttb}\bl\hAB\br$,
respectively, $\hAB(\ttb)$ has also a $B$-basis
$$\bfG(\ttb)\seteq\bfG\cap \hcalA(\ttb).$$

\Prop \label{prop:PequivG}
For any sequence $\ii=(i_1,\ldots, i_r)\in I^r$ and $\bsu=(u_1,\ldots,u_r)\in\Z_{\ge0}^r$,
there exists a unique
$\cb(\ii,\bsu)\in\hBi$ such that
$$\ttP^\ii(\bsu)\equiv \rmG\bl\cb(\ii,\bsu)\br\qmodLuphA.$$
Moreover, $\sfR_{i_1}\bl\cb(\ii',\bsu')\br$ and $\vphi_{0}( \ang{i_1^{u_1}} )$
are \cble and
$$\cb(\ii,\bsu)=\sfR_{i_1}\bl\cb(\ii',\bsu')\br*  \vphi_{0}( \ang{i_1^{u_1}} ),$$
where $\ii'=(i_2,\ldots,i_r)$ and $\bsu'=(u_2,\ldots,u_r)$
\rmo see {\rm \S\;\ref{subsec:braidecr}, Definition~\ref{def:cble}} and {\rm Proposition~\ref{prop:cble}}\rmf.
\enprop
\Proof
We shall argue by induction on $r$.
If $r=1$, it is obvious.
Assume that $r>1$.
Then by the induction hypothesis,
we have
$$\ttP^{\ii'}(\bsu')\equiv \rmG\bl\cb(\ii',\bsu')\br\qmodLuphA.$$
On the other hand, we have
$$\ttP^\ii(\bsu)=\TT_{i_1}\bl\ttP^{\ii'}(\bsu')\br\vphi_0\bl\ang{i_1^{u_1}}\br.$$
Hence
we have
\eqn
\TT_{i_1}^{-1}\ttP^\ii(\bsu)&&=
\ttP^{\ii'}(\bsu') \cdot  \vphi_{-1}  \bl \ang{i_1^{u_1}}\br\\
&&\equiv \rmG\bl\cb(\ii',\bsu')\br\cdot\vphi_{-1}  \bl  \ang{i_1^{u_1}}\br\qmodLuphA\\
&&\equiv \rmG\bl\cb(\ii', \bsu')*\vphi_{-1}(\ang{i_1^{u_1}})\br\qmodLuphA
\eneqn
by Proposition~\ref{prop:cble} since
$\cb(\ii',\bsu')\in\hBi_{\ge0}$.
Thus we obtain
\[ \ttP^\ii(\bsu)\equiv \TT_{i_1}\Bigl(
\rmG\bl\cb( \ii', \bsu')*\vphi_{-1}(\ang{i_1^{u_1}})\br\Bigr)
\equiv \rmG\bl\sfR_{i_1}\cb(\ii',\bsu')*\vphi_{0}(\ang{i_1^{u_1}})\br. \qedhere
\]
\QED

\Cor 
Let $\ttb\in\ttB^+$ and $\ii\in\Seq(\ttb)$.
Then we have
\bnum
\item
  $\bfG\cap \hcalA(\ttb)=\set{\rmG(\cb(\ii,\bsu) )}{\bsu\in\Z_{\ge0}^r}$,
\item  
  $\stt{\rmG\bl\cb(\ii,\bsu)\br}_{\bsu\in\Z_{\ge0}^r}$ is a $B$-basis of $\hAB(\ttb)$.
  \ee
  \encor

  \Proof  
  Let $Y$ be the image of $\bfG\cap\hA_{\Z[q]}$ in $\hA_{\Z[q]}/q\hA_{\Z[q]}$.
  Then $Y$ is a $\Z$-basis of $\hA_{\Z[q]}/q\hA_{\Z[q]}$.
 Note that
  both $\bfG\cap \hcalA(\ttb)$ and $\stt{\ttP^\ii(\bsu)\mid\bsu\in\Z_{\ge0}^r}$
  are $\Z[q]$-bases of $\hA_{\Z[q]}(\ttb)$.
  Hence they give $\Z$-bases of
  $\hA_{\Z[q]}(\ttb)/q\hA_{\Z[q]}(\ttb)$.
  Proposition~\ref{prop:PequivG} implies that
the image of $\stt{\ttP^\ii(\bsu)\mid\bsu\in\Z_{\ge0}^r}$
  in $\hA_{\Z[q]}/q\hA_{\Z[q]}$ is equal to 
  $X\seteq\stt{\rmG\bl\cb(\ii,\bsu)\br\mod q\hA_{\Z[q]}\mid \bsu\in\Z_{\ge0}^r}$.
Hence  $X\subset Y$ is a $\Z$-basis of
  $\hA_{\Z[q]}(\ttb)/q\hA_{\Z[q]}(\ttb)\subset \hA_{\Z[q]}/q\hA_{\Z[q]}$.
It means that
$X=Y\cap \bl\hA_{\Z[q]}(\ttb)/q\hA_{\Z[q]}(\ttb)\br$, which implies that
  $\stt{\rmG\bl\cb(\ii,\bsu)\mid\bsu\in\Z_{\ge0}^r}=\bfG\cap
  \hA(\ttb)$.  
\QED

\begin{definition} \label{def:rev}
  Let $\ttb\mapsto \ttb^\rev$ be the anti-automorphism of the group $\ttB$ given by
  $r_i\mapsto r_i$.

  Let $\ttb\in\ttB^+$ and $\ii=(i_1,i_2,\ldots,i_r) \in \Seq(\ttb)$.
\bna
\item We set $\ii^\rev \seteq (i_r,\ldots,i_2,i_1)\in\Seq(\ttb^\rev)$. 
\item For $\bsu=(u_1,\ldots,u_r) \in \Z_{\ge0}^r$, we set $\bsu^\rev \seteq (u_r,\ldots,u_1) \in \Z_{\ge0}^r$. 
\ee
\end{definition}
Then we have 
$$\TT_{\ttb^\rev}^{\star} = (\TT_{\ttb})^{-1}.$$

The following proposition can be understood as a braid-analogue of
\emph{Levendorski\u{\i}-Soibelman $(LS)$ formula} (see \cite[Proposition 5.2.2]{LS91}).

\begin{proposition} \label{prop: LS}
Let $\ii =(i_1,\ldots,i_r) \in \Seq(\ttb)$ for $\ttb \in \ttB^+$.
For  $1 \le k < t \le r$, we have
\begin{align} \label{eq: LS formula}
[\ttP^{\ii}_k,\ttP^{\ii}_t]_q = \sum_{\bsu =(u_{k+1},\ldots,u_{t-1}) \in \Z_{\ge0}^{(k,t)}} Q_{\bsu} \ttP^{u_{t-1}}_{t-1} \cdots \ttP^{u_{k+2}}_{k+2} \ttP^{u_{k+1}}_{k+1}
\quad\text{for some $Q_{\bsu} \in \bbA$.}
\end{align}
\end{proposition}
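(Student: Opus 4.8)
The plan is to prove the Levendorski\u{\i}--Soibelman type formula \eqref{eq: LS formula} by reducing it to a statement about a single copy of $\Uqgm$ (or rather $\Aqn$), where the classical $LS$-formula is available, and then transporting it through the braid symmetries $\TT_i$. First I would set up the framework: recall that $\ttP^\ii_k = \TT_{i_1}\cdots\TT_{i_{k-1}}\vphi_0(\ang{i_k})$, and observe that for a fixed word $\ii$, all the cuspidal elements $\ttP^\ii_1,\ldots,\ttP^\ii_r$ lie in $\hcalA(\ttb)= \hcalA_{\ge0}\cap\TT_\ttb(\hcalA_{<0})$ by Proposition~\ref{prop: hA(b) span}\;\eqref{it: step 3 hA(b)}, and that $\bfP_\ii$ is a $B$-basis of $\hAB(\ttb)$ (Corollary~\ref{cor: P_i k-basis}). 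Hence the bracket $[\ttP^\ii_k,\ttP^\ii_t]_q$, which is again homogeneous of weight $\wt(\ttP^\ii_k)+\wt(\ttP^\ii_t)$ and lies in $\hcalA(\ttb)$, can be expanded uniquely in the PBW-basis $\bfP_\ii$. The content of the proposition is that only those monomials $\ttP^\ii(\bsu)$ supported on the strictly intermediate indices $k<s<t$ (and with total weight matching) appear, with coefficients in $\bbA=\Zq$.

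Next I would carry out the reduction by induction on $k$. By applying $\TTiv{i_1}\cdots\TTiv{i_{k-1}}$ (an algebra automorphism preserving $[\ ,\ ]_q$ up to the weight bookkeeping, since $\wt(\TT_i x)=s_i\wt(x)$ and the braiding factor $q^{-(\wt x,\wt y)}$ is $s_i$-invariant), the claim for $(k,t)$ with $k\ge 2$ is transformed into the claim for $(1,t-k+1)$ with the shifted word $(i_k,i_{k+1},\ldots,i_r)$; more precisely $\TTiv{i_1}\cdots\TTiv{i_{k-1}}(\ttP^\ii_s) = \ttP^{\ii[k]}_{s-k+1}$ where $\ii[k]=(i_k,\ldots,i_r)$. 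So it suffices to treat $k=1$, i.e.\ to expand $[\vphi_0(\ang{i_1}),\ttP^\ii_t]_q$. Write $\ttP^\ii_t = \TT_{i_1}(Z)$ with $Z = \ttP^{\ii'}_{t-1}$, $\ii'=(i_2,\ldots,i_r)$, and $Z\in\hcalA(\ttb')\subset\hcalA_{\ge0}$. Here the key structural input is that $\vphi_0(\ang{i_1})=\kappa_{i_1}\TT_{i_1}(\calF_{i_1,-1})$; no, more usefully: since $Z\in\hcalA_{\ge0}$ and $\vphi_0(\ang{i_1})\in\hcalA[0]$, one computes $[\vphi_0(\ang{i_1}),\TT_{i_1}(Z)]_q$ by pulling $Z$ to $\TT_{i_1}^{-1}$ via $\TT_{i_1}^{-1}\bl[\vphi_0(\ang{i_1}),\TT_{i_1}Z]_q\br = [\TT_{i_1}^{-1}\vphi_0(\ang{i_1}),Z]_q = [\vphi_{-1}(\ang{i_1}),Z]_q$ up to a scalar, using $\TT_{i_1}^{-1}\vphi_0(\ang{i_1}) = \vphi_{-1}(\ang{i_1})$ (a case of Lemma~\ref{lem:TTI0}, or directly $\TTiv{i_1}(f_{i_1,0})=f_{i_1,-1}$). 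But $\vphi_{-1}(\ang{i_1})\in\hcalA[-1]$ and $Z\in\hcalA_{\ge 0}\subset\hcalA_{\ge -1}$, and here is the point: the $q$-boson relations let us compute $[\vphi_{-1}(\ang{i_1}),Z]_q = \Es_{i_1,0}$-type commutator entirely inside $\hcalA_{\ge 0}$. Expanding $Z=\ttP^{\ii'}_{t-1}$ in terms of the cuspidal elements of $\ii'$ and using the induction hypothesis on the shorter word $\ii'$ together with the bracket-derivation identities \eqref{eq: []q1}, one obtains an expansion supported on $\ttP^{\ii'}_1,\ldots,\ttP^{\ii'}_{t-2}$ with $\Zq$-coefficients; applying $\TT_{i_1}$ back returns an expansion on $\ttP^{\ii}_2,\ldots,\ttP^{\ii}_{t-1}$.

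The main obstacle I anticipate is the bookkeeping that guarantees (i) \emph{integrality} of the coefficients $Q_{\bsu}\in\Zq$ and (ii) the precise \emph{support condition} (only strictly intermediate indices, no powers of the endpoints $\ttP^\ii_k,\ttP^\ii_t$). For integrality I would use that $\TT_i^{\pm1}$ preserves the $\Zq$-lattice $\hcalA_{\Zq}$ (Proposition~\ref{prop:lattinv}) together with the fact that $\hAB(\ttb)$ with $B=\Zq$ has $\bfP_\ii$ as a $\Zq$-basis (Corollary~\ref{cor: P_i k-basis}), so the coefficients in the PBW-expansion of any element of $\hcalA_{\Zq}(\ttb)$ automatically lie in $\Zq$; hence it suffices to show $[\ttP^\ii_k,\ttP^\ii_t]_q\in\hcalA_{\Zq}$, which follows since $\ttP^\ii_k,\ttP^\ii_t\in\hcalA_{\Zq}$ and $[\ ,\ ]_q$ preserves $\hcalA_{\Zq}$ (its defining formula has coefficients in $\Zq$). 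For the support condition, the clean argument is a grading/ordering one: introduce the $\Z^r$-grading by PBW-degree on $\hcalA(\ttb)$ in which $\ttP^\ii(\bsu)$ has multidegree $\bsu$, use that $[\ttP^\ii_k,\ttP^\ii_t]_q$ has total weight $\wt(\ttP^\ii_k)+\wt(\ttP^\ii_t)$, and an induction on $t-k$ combined with the weight grading and the convexity of the order $\be^\ii_1,\ldots,\be^\ii_r$ of positive roots to exclude any monomial involving $\ttP^\ii_s$ with $s\le k$ or $s\ge t$. An alternative, perhaps cleaner, route is to prove the whole identity \eqref{eq: LS formula} by transporting the classical $LS$-formula for the dual PBW-basis of $\Aqn$ (available from \cite{LS91, Kimura12}) one copy at a time through the tensor decomposition \eqref{eq: hcalA decomposition} and the compatibility \eqref{eq: T_i S_i} of $\TT_i$ with $\sfS_i$; but the $q$-boson cross-terms between adjacent copies $\hcalA[m]$ and $\hcalA[m+1]$ would then need separate care, which is exactly what the first approach handles uniformly.
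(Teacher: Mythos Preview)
Your setup is correct and matches the paper: the commutator lies in $\hcalA_{\Zq}(\ttb)$, hence expands in the PBW-basis $\bfP_\ii$ with $\Zq$-coefficients by Corollary~\ref{cor: P_i k-basis}; the reduction to $k=1$ via $\TTiv{i_1}\cdots\TTiv{i_{k-1}}$ is exactly what the paper does; and your argument that $\TT_{i_1}^{-1}[\ttP^\ii_1,\ttP^\ii_t]_q=[\vphi_{-1}(\ang{i_1}),Z]_q\in\hcalA_{\ge0}$ forces $u_1=0$ in the PBW-expansion is the paper's step~(a).

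The gap is in eliminating the top index: you have no valid argument that $u_t=0$. The ``induction hypothesis on the shorter word $\ii'$'' is never actually applied to a legitimate instance of the statement---you are computing $[\vphi_{-1}(\ang{i_1}),\ttP^{\ii'}_{t-1}]_q$, but $\vphi_{-1}(\ang{i_1})$ is not a cuspidal element of $\ii'$, so the inductive LS-formula for $\ii'$ says nothing about this bracket. The ``convexity of the order $\be^\ii_1,\ldots,\be^\ii_r$ of positive roots'' is simply false here: $\ttb$ is an arbitrary positive braid word, not a reduced expression, so the weights $s_{i_1}\cdots s_{i_{k-1}}\al_{i_k}$ need not be positive roots at all (e.g.\ in type $A_1$ with $\ii=(1,1)$ one gets $\al_1,-\al_1$), and no convex-order argument is available.

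The paper's device for $u_r=0$ is a symmetry you did not identify: the $\bfk$-algebra anti-automorphism $\TT_{\ii^\rev}\circ\star\circ\ocalD$ satisfies $\ttP^{\ii^\rev}_{r+1-k}=\TT_{\ii^\rev}\circ\star\circ\ocalD(\ttP^\ii_k)$, so applying it to the PBW-expansion of $[\ttP^\ii_1,\ttP^\ii_r]_q$ interchanges the roles of the first and last indices and reduces ``$u_r=0$'' to the already-proved ``$u_1=0$'' for the reversed word $\ii^\rev$.
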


\begin{proof}
  Since $\{ \TT_i \}_{i\in I}$ are automorphisms, we can assume $k=1$
and $t=r$ without loss of generality. 
  We can write
  \eq
[\ttP^{\ii}_1,\ttP^{\ii}_r]_q = \sum_{\bsu =(u_{1},\ldots,u_{r}) \in\Z_{\ge0}^{[1,r]}} 
Q_{\bsu} \ttP^\ii(\bsu)
\quad\text{for some $Q_{\bsu} \in \bbA$.}\label{eq:PBWcom}
\eneq
Hence it is enough to show that
$Q_{\bsu}=0$ if $u_1 \neq 0$ or $u_r\neq0$.

\snoi
(a)\ We shall first show that $Q_{ \bsu}=0$ if $u_1 \neq 0$.
Note that we have 
\begin{align*}
[\ttP^{\ii}_1,\ttP^{\ii}_t]_q &=  [ \vph_{0}(\Ang{i_1}), \TT_{i_1}\TT_{i_{2}}\ldots \TT_{i_{t-1}} \vph_{0}(\Ang{i_{t}}) ]_q \allowdisplaybreaks    \\
& = \TT_{i_1}  [ \vph_{-1}(\Ang{i_1}),  \TT_{i_{2}}\ldots \TT_{i_{t-1}} \vph_{0}(\Ang{i_{t}}) ]_q.  
\end{align*}
Since $\vph_{-1}(\Ang{i_1}) =q_{i_1}^{1/2}f_{i_1,-1} \in \hcalA_{<0}$ and $\TT_{i_{2}}\ldots \TT_{i_{t-1}} \vph_{0}(\Ang{i_{t}}) \in \hcalA_{\ge0}$, 
we have
$$[ \vph_{-1}(\Ang{i_1}), \TT_{i_{2}}\ldots \TT_{i_{t-1}} \vph_{0}(\Ang{i_{t}}) ]_q \in \hcalA_{\ge0}$$
by the defining relation in~\eqref{it: def of hA (b)}.

We have
$$\TT_{i_1}^{-1}\ttP^\ii(\bsu)=\ttP^{\ii'}(u_2,\ldots,u_r)\;
\vphi_{-1}\bl\ang{i_{1}^{u_1}}\br,$$
and hence
$$\sum_{\bsu =(u_{1},\ldots,u_{r}) \in\Z_{\ge0}^{ [1,r]  } } 
Q_{\bsu}\ttP^{\ii'}(u_2,\ldots,u_r) \vphi_{-1}\bl\ang{i_{1}^{u_1}}\br$$
belongs to $\hA_{\ge0}$.
Therefore, $Q_{\bsu}=0$ if $u_1\neq0$.

\mnoi
(b)\ 
We shall show that $Q_{\bsu}=0$ if $u_r\not=0$.
We can easily check (see\ Corollary~\ref{cor:Theta} below) 
$$\ttP^{\ii^\rev}_{r+1-k}=\TT_{\ii^\rev}\circ\star\circ \db (\ttP^{\ii}_k),$$
where $\ii^\rev=(i_r,\ldots,i_1)$.
Hence we obtain
$$\ttP^{\ii^\rev}(\bsu^\rev)=\TT_{\ii^\rev}\circ\star\circ \db \bl\ttP^{\ii}(\bsu)\br,$$
where $\bsu^\rev=(u_r,\ldots,u_1)$.
Thus, we conclude that $Q_{\bsu}=0$ for $u_r\not=0$
by using (a) and applying the anti-automorphism $\TT_{\ii^\rev}\circ\star\circ\db$
to \eqref{eq:PBWcom}.
\end{proof}

\begin{definition} \label{def: bi-lexico}
We denote by $<_{{\rm bi}}$ the bi-lexicographic  partial order on $\Z_{\ge0}^r$; i.e., for $\bsa=(a_1,a_{2},\ldots,a_r),\bsa'=(a'_1,a'_{2},\ldots,a'_r) \in \Z_{\ge0}^r$,
$\bsa <_{{\rm bi}} \bsa'$ if and only if the following conditions hold:
\bna
\item \label{it: lex 1}  there exists $s \in [1,r]$ such that $a_t = a_t'$ for any $t <  s$ and
  $ a_s < a_s'$,
\item \label{it: lex 2} there exists $u \in [1,r]$ such that $ a_t = a_t' $ for any $t > u$ and
  $a_u < a_u'$.
\ee
\end{definition} 

The following corollary is a direct consequence of LS-formula. 
\begin{corollary}
We have
$$
c(\ttP^\ii(\bsu)) = \ttP^\ii(\bsu) +\sum_{\bsu'   <_{{\rm bi}}  \bsu} f_{\bsu,\bsu'}  \ttP^\ii(\bsu') \quad\text{for some $f_{\bsu,\bsu'} \in \bbA$.}
$$
\end{corollary}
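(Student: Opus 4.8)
The plan is to deduce the corollary directly from the Levendorski\u{\i}--Soibelman formula (Proposition~\ref{prop: LS}) together with the $c$-invariance of the individual cuspidal powers $\diPP{\ii}{n}{k}$, which was already observed just before Proposition~\ref{prop: orthogonality}. First I would record the elementary behaviour of $c$ on products: by~\eqref{eq: c property}, for homogeneous $x,y$ one has $c(xy)=q^{(\wt x,\wt y)}c(y)c(x)$, so applying $c$ to the decreasing product $\ttP^\ii(\bsu)=\oprod_{k\in[1,r]}\diPP{\ii}{u_k}{k}$ turns it into a scalar $q^{e(\bsu)}$ (with $e(\bsu)=\sum_{s<t}(\wt\diPP{\ii}{u_s}{s},\wt\diPP{\ii}{u_t}{t})$, though the precise value will not matter) times the reversed, i.e.\ \emph{increasing}, product $\rprod_{k\in[1,r]}\diPP{\ii}{u_k}{k}$, using that each factor $\diPP{\ii}{u_k}{k}$ is $c$-invariant.

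The core step is then to straighten this increasing product back into the PBW ordering and to control which $\bsu'$ can occur. I would argue by induction on $r$ (or, equivalently, on the number of ``inversions'' in the increasing product relative to the decreasing one), repeatedly using~\eqref{eq: LS formula} to rewrite $\diPP{\ii}{u_k}{k}\diPP{\ii}{u_t}{t}$ with $k<t$: since $\diPP{\ii}{n}{k}=q_{i_k}^{n(n-1)/2}(\ttP^\ii_k)^n$ is a power of the cuspidal element, and $[\ttP^\ii_k,\ttP^\ii_t]_q$ is an $\bbA$-linear combination of PBW-elements supported strictly between indices $k$ and $t$ (hence strictly increasing in the first coordinate-support and decreasing in the last), one sees that each time we move a later-indexed factor past an earlier-indexed one we produce terms whose exponent vectors are $<_{\rm bi}$ the exponent vector of the term we started from, in the sense of Definition~\ref{def: bi-lexico}. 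The bookkeeping is that the ``outer'' indices $1$ and $r$ of the commutator bracket drop out entirely and all correction terms live on intermediate indices with strictly smaller total degree there, which is exactly condition~\eqref{it: lex 1}--\eqref{it: lex 2}. Iterating, $c(\ttP^\ii(\bsu))$ becomes $q^{e(\bsu)}$ times $\ttP^\ii(\bsu)$ plus an $\bbA$-combination of $\ttP^\ii(\bsu')$ with $\bsu'<_{\rm bi}\bsu$.

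Finally I would pin down the leading coefficient. Comparing the pairing $\pair{\ \cdot\ ,\ \cdot\ }$ against Proposition~\ref{prop: orthogonality}, or simply using that $c$ is an involution fixing each $\diPP{\ii}{n}{k}$ and that reordering a product of \emph{commuting-up-to-$q$-power} elements produces a monic leading term, one gets $q^{e(\bsu)}=1$: indeed $c^2=\id$ forces $c(\ttP^\ii(\bsu))$ to have leading coefficient $\pm1$ with respect to $<_{\rm bi}$, and evaluating the pairing $\pair{c(\ttP^\ii(\bsu)),c(\ttP^\ii(\bsu))}$, which by Corollary~\ref{Cor:invTT}-type invariance and Proposition~\ref{prop: orthogonality} equals the nonzero scalar $\prod_k\prod_{s=1}^{u_k}(1-q_{i_k}^{2s})$, rules out the sign and the $q$-power; alternatively one observes $c$ acts trivially on $\hcalA[0]_0=\Qh$ and tracks weights. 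Hence $c(\ttP^\ii(\bsu))=\ttP^\ii(\bsu)+\sum_{\bsu'<_{\rm bi}\bsu}f_{\bsu,\bsu'}\ttP^\ii(\bsu')$ with $f_{\bsu,\bsu'}\in\bbA$, as claimed. The main obstacle I anticipate is the combinatorial verification that the straightening never produces a term with a strictly larger exponent vector in either the $<_{\rm bi}$ ``left'' or ``right'' sense — this requires carefully tracking, through the nested inductions, that the LS correction terms are genuinely supported on the open interval of indices and carry strictly smaller partial degrees there; everything else is formal manipulation of $c$ and the pairing.
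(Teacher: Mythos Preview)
Your approach is correct and is exactly what the paper intends by ``direct consequence of the LS-formula'': apply $c$ using \eqref{eq: c property} and the $c$-invariance of each $\diPP{\ii}{u_k}{k}$ to obtain $q^{e(\bsu)}$ times the reversed product, then straighten via Proposition~\ref{prop: LS}, whose correction terms are supported on the open interval $(k,t)$ and hence are strictly smaller in the bi-lexicographic order.

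Your treatment of the leading coefficient, however, is muddled. You write that one must show $q^{e(\bsu)}=1$; this is not what happens, and in general $e(\bsu)\neq 0$. Rather, the straightening step itself contributes the inverse power: each application of the LS relation replaces $\ttP^\ii_k\,\ttP^\ii_t$ (for $k<t$) by $q^{-(\wt\ttP^\ii_k,\wt\ttP^\ii_t)}\ttP^\ii_t\,\ttP^\ii_k$ plus interior corrections, so completely reordering $\diPP{\ii}{u_1}{1}\cdots\diPP{\ii}{u_r}{r}$ to $\ttP^\ii(\bsu)$ produces the leading factor $q^{-e(\bsu)}$, and $q^{e(\bsu)}\cdot q^{-e(\bsu)}=1$. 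This direct computation is simpler and cleaner than your proposed route via $c^2=\id$ and the pairing. In particular, the pairing argument does not work as stated: there is no established relation in the paper between $c$ and $\pair{\ ,\ }$ (Corollary~\ref{Cor:invTT} concerns $\TT_i$, not $c$), so invoking ``Corollary~\ref{Cor:invTT}-type invariance'' to compare $\pair{c(\ttP^\ii(\bsu)),c(\ttP^\ii(\bsu))}$ with $\pair{\ttP^\ii(\bsu),\ttP^\ii(\bsu)}$ is unjustified. The involution argument $c^2=\id$ does give $a_\bsu\bar a_\bsu=1$ (by induction on $<_{\rm bi}$), hence $a_\bsu=\pm q^n$, but you then need a separate argument to eliminate the sign and power; the direct cancellation above avoids all of this.
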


\begin{lemma} \label{lem: Gb}
  There exists a family   $\stt{\rmG_\ttb(\bsu,\ii)}_{\bsu \in \Z_{\ge0}^r }$
  of elements in $\hcalA_\bbA(\ttb)$
such that 
\begin{align}
  c(\rmG_\ttb(\bsu,\ii))   & = \rmG_\ttb(\bsu,\ii), \label{eq: Gb-invaraint} \\
  \ttP^\ii(\bsu) & = \rmG_\ttb(\bsu,\ii) + \sum_{\bsu'  <_{{\rm bi}}  \bsu} g_{\bsu,\bsu'} \rmG_\ttb(\bsu',\ii) \quad\text{for some $g_{\bsu,\bsu'} \in q \Z[q ]$}.
  \label{eq: ttPi b triangular}
\end{align}
\end{lemma}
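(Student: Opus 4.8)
The plan is to obtain the family $\{\rmG_\ttb(\bsu,\ii)\}$ by the standard triangular-correction (Gram--Schmidt type) argument, exactly as in the classical construction of global bases from a PBW basis, using the two inputs already at hand: the unitriangularity of $c$ with respect to $<_{\rm bi}$ proved in the preceding corollary, and the fact that the PBW-elements $\ttP^\ii(\bsu)$ are $c$-invariant (noted right after \eqref{eq: PBW element}, since each $\diPP{\ii}{u_k}{k}$ is $c$-invariant). First I would record that $c$ is an involution on $\hcalA$ with $c(q^{1/2})=q^{-1/2}$ and, by Corollary (the LS corollary), that on the lattice $L\seteq\bigoplus_{\bsu}\Z[q]\,\ttP^\ii(\bsu)=\hcalA_{\Z[q]}(\ttb)$ (using Corollary~\ref{cor: P_i k-basis} with $B=\Z[q]$ to identify this with the PBW $\Z[q]$-span) the matrix of $c$ in the PBW basis is unitriangular for $<_{\rm bi}$ with off-diagonal entries in $\bbA=\Z[q^{\pm1}]$.

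The main step is a downward induction on $\bsu$ with respect to $<_{\rm bi}$ to produce, for each $\bsu$, an element
$$\rmG_\ttb(\bsu,\ii)=\ttP^\ii(\bsu)+\sum_{\bsu'<_{\rm bi}\bsu} p_{\bsu,\bsu'}\,\ttP^\ii(\bsu')$$
with $p_{\bsu,\bsu'}\in q\Z[q]$ and $c\bl\rmG_\ttb(\bsu,\ii)\br=\rmG_\ttb(\bsu,\ii)$; then inverting the (unitriangular) change of basis between $\{\ttP^\ii(\bsu)\}$ and $\{\rmG_\ttb(\bsu,\ii)\}$ gives \eqref{eq: ttPi b triangular} with coefficients $g_{\bsu,\bsu'}\in q\Z[q]$, since the inverse of a unitriangular matrix over $q\Z[q]$-perturbations of the identity again has off-diagonal entries in $q\Z[q]$. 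Concretely: writing $c(\ttP^\ii(\bsu))=\ttP^\ii(\bsu)+\sum_{\bsu'<_{\rm bi}\bsu}f_{\bsu,\bsu'}\,\ttP^\ii(\bsu')$, one seeks corrections so that the $c$-invariant combination has the stated triangular shape. This is Lusztig's lemma: the key algebraic fact is that for the involution $\overline{\phantom{x}}$ on $\Z[q^{\pm1}]$ (here incarnated via $c$), and a unitriangular matrix $(f_{\bsu,\bsu'})$ satisfying the cocycle identity $f\cdot\overline{f}=\mathrm{id}$ (which holds because $c^2=\mathrm{id}$), there is a unique unitriangular $(p_{\bsu,\bsu'})$ with entries in $q\Z[q]$ off the diagonal such that $p=f\cdot\overline{p}$; one solves for the $p_{\bsu,\bsu'}$ entrywise by the recursion $p_{\bsu,\bsu'}-\overline{p_{\bsu,\bsu'}}=f_{\bsu,\bsu'}+\sum_{\bsu'<_{\rm bi}\bsu''<_{\rm bi}\bsu}f_{\bsu,\bsu''}\,\overline{p_{\bsu'',\bsu'}}$, and the right-hand side being invariant under $x\mapsto -\overline{x}$ (a consequence of $f\overline{f}=\mathrm{id}$) guarantees a unique solution in $q\Z[q]$. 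That the resulting $\rmG_\ttb(\bsu,\ii)$ lie in $\hcalA_\bbA(\ttb)$ is automatic, as they are $\Z[q^{\pm1}]$-combinations of PBW-elements, which lie in $\hcalA_\bbA(\ttb)$ by Corollary~\ref{cor: P_i k-basis}.

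The anticipated obstacle is purely bookkeeping: verifying that the off-diagonal entries of both $(p_{\bsu,\bsu'})$ and its inverse stay in $q\Z[q]$ rather than merely in $\Z[q^{\pm1}]$, and that the finite-support condition on $\bsu\in\Z_{\ge0}^r$ (with $r=\ell(\ttb)$ finite here) poses no issue — it does not, since $r$ is finite so the poset $\Z_{\ge0}^r$ with $<_{\rm bi}$ is locally finite below any element and the induction is well-founded. No genuinely new idea beyond Lusztig's lemma is needed; I would therefore present the proof as: (i) cite the preceding corollary for $c$-unitriangularity and $c$-invariance of $\ttP^\ii(\bsu)$; (ii) apply the abstract Lusztig-type lemma to the $c$-action on $\hcalA_{\Z[q]}(\ttb)$ equipped with the basis $\bfP_\ii$ and the order $<_{\rm bi}$; (iii) invert the transition matrix to get \eqref{eq: ttPi b triangular}. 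If the paper wishes to be self-contained I would include a one-paragraph proof of the abstract lemma via the entrywise recursion above; otherwise a reference to the standard statement (e.g.\ as in the construction of canonical/global bases) suffices.
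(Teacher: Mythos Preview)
Your approach is exactly what the paper intends: it simply cites the Kazhdan--Lusztig argument, and you have spelled out that argument. Two small corrections are worth noting. First, the full PBW monomials $\ttP^\ii(\bsu)$ are \emph{not} $c$-invariant---only the individual factors $\diPP{\ii}{u_k}{k}$ are, since $c(xy)=q^{(\wt x,\wt y)}c(y)c(x)$; your own use of the nontrivial expansion $c(\ttP^\ii(\bsu))=\ttP^\ii(\bsu)+\sum f_{\bsu,\bsu'}\ttP^\ii(\bsu')$ shows you do not actually rely on that erroneous claim, so the argument survives. Second, the poset $(\Z_{\ge0}^r,<_{\rm bi})$ is \emph{not} locally finite below a point (e.g.\ $(0,n,0)<_{\rm bi}(1,0,1)$ for every $n\ge0$); what is true and sufficient is that $<_{\rm bi}$ is well-founded (any descending chain is descending for the left-lexicographic order on $\Z_{\ge0}^r$, which is a well-order), and combined with the row-finiteness of the $f$-matrix (each $c(\ttP^\ii(\bsu))$ lies in $\hcalA_\bbA(\ttb)$, hence is a finite combination of PBW elements) this makes the recursion for $p_{\bsu,\bsu'}$ terminate by K\"onig's lemma.
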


\begin{proof}
The proof is the same as the one for the existence of Kazhdan-Lusztig polynomials.    
\end{proof}

From the unitriangularity in~\eqref{eq: ttPi b triangular}, we also have
$$
  \rmG_\ttb(\bsu,\ii)    = \ttP^\ii(\bsu)  + \sum_{\bsu'  <_{{\rm bi}}  \bsu} t_{\bsu,\bsu'} \ttP^\ii(\bsu')  \quad\text{for some $t_{\bsu,\bsu'} \in q \Z[q ]$}.
$$

\Lemma\label{lem:ggb}
For any $\bsu\in\Z_{\ge0}^r$, we have
$$\rmG_\ttb(\bsu,\ii) =\rmG(\cb(\ii,\bsu)).$$
\enlemma
\Proof
Indeed, $\rmG_\ttb(\bsu,\ii)\in\hL$,
$\rmG_\ttb(\bsu,\ii)\equiv\ttP^\ii(\bsu)\equiv
\rmG(\cb(\ii,\bsu))\qmodLuphA$,
and $\rmG_\ttb(\bsu,\ii)$ is $c$-invariant.
\QED

\begin{conjecture} \label{conj: quantum positive} 
If $\g$ is of symmetric type, the coefficient $g_{\bsu,\bsu'}$
in~\eqref{eq: ttPi b triangular} is contained in $q\Z_{\ge 0}[q]$.
\end{conjecture}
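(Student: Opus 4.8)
The plan is to realize the unitriangular change-of-basis matrix of Lemma~\ref{lem: Gb} as a matrix of graded composition multiplicities in a monoidal categorification, where non-negativity is automatic. Note first that Lemma~\ref{lem: Gb} already provides $g_{\bsu,\bsu'}\in q\Z[q]$ together with triangularity for $<_{{\rm bi}}$; since $q\Z[q]\cap\Z_{\ge0}[q^{\pm1}]=q\Z_{\ge0}[q]$, it suffices to show that each $g_{\bsu,\bsu'}$, as a Laurent polynomial in $q$, has non-negative coefficients.

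For $\g$ of simply-laced finite type this can be carried out along the lines indicated in the Introduction. Under the Hernandez--Leclerc isomorphism $\hcalA\simeq\calK_t(\Cg^0)$ (\cite{HL15}), the normalized global basis $\tbfG$ is the basis of $(q,t)$-characters of simple modules. Using the compatibility of the $\TT_i$'s with the ring automorphisms of $\calK_t(\Cg^0)$ (\cite{JLO2,OP24}) together with Theorem~\ref{thm: braid crystal} describing the action of $\TT_i$ on the global basis, one identifies each normalized cuspidal element $\sigma(\ttP^\ii_k)$ with the $(q,t)$-character of a real simple module $S_k$, and hence $\sigma(\ttP^\ii(\bsu))$ with the $(q,t)$-character of the corresponding standard module. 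Consequently $g_{\bsu,\bsu'}$ is, up to a power of $q^{1/2}$, the multiplicity polynomial of the simple module $\rmG(\cb(\ii,\bsu'))$ in that standard module, which lies in $\Z_{\ge0}[q^{\pm1}]$ by the positivity of the intersection-cohomology transition matrix on the relevant Nakajima quiver varieties reproved in \cite{VV03}. The only non-formal step here is the module-theoretic identification of the cuspidal elements and of their ordered products.

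For general symmetric $\g$ two cases should be separated. When $\ttb$ is the lift of a Weyl group element $w$ in the braid monoid, one has $\hcalA(\ttb)\simeq A_q(\n(w))$, and $\bfP_\ii$, $\bfG(\ttb)$ go over to the dual PBW basis and the dual canonical basis of $A_q(\n(w))$; positivity of that transition matrix for symmetric Kac--Moody $\g$ is known, by geometric methods in finite type and by the theory of cuspidal and proper standard modules over Khovanov--Lauda--Rouquier algebras in general, so the conjecture holds in that case. For a general positive braid word $\ttb$ one would need a monoidal categorification of $\hcalA(\ttb)$ itself: a graded abelian category $\calC_\ttb$ with $[\calC_\ttb]\simeq\hcalA_{\bbA}(\ttb)$ in which the global basis is realized by self-dual simple objects, and in which the cuspidal elements $\ttP^\ii_k$ are classes of real simple objects that commute past one another, up to powers of $q$, in the order prescribed by $\ii$, so that $\ttP^\ii(\bsu)$ is the class of a proper standard object; then $g_{\bsu,\bsu'}$ is a graded composition multiplicity and therefore lies in $\Z_{\ge0}[q^{\pm1}]$. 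A plausible source for $\calC_\ttb$ is (affine) quiver Hecke algebras via a generalized quantum affine Schur--Weyl duality, or the conjectural quantum cluster algebra structure on $\hcalA(\ttb)$.

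The main obstacle is precisely the construction of this categorification in the general symmetric case, together with the identification of cuspidal elements with real simple objects and of their ordered products with proper standard objects: the braid symmetries $\TT_i$ are automorphisms of $\hcalA$, not a priori functors, and promoting them to the categorical level for arbitrary symmetrizable $\g$ is what is missing. A purely algebraic substitute --- propagating positivity through the Kazhdan--Lusztig-type induction of Lemma~\ref{lem: Gb} and the Levendorski\u{\i}--Soibelman-type formula of Proposition~\ref{prop: LS} --- does not appear to work, since it would require positivity of the structure constants $Q_{\bsu}$ appearing there, which is no weaker than the conjecture itself.
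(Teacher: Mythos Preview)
This statement is a \emph{conjecture}, and the paper does not prove it in general; it only establishes the simply-laced finite type case in the Proposition immediately following. Your proposal correctly recognises this and correctly identifies the missing ingredient for general symmetric $\g$ as a suitable monoidal categorification of $\hcalA(\ttb)$; that part of your write-up is a strategy statement rather than a proof, which is all one can expect.

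For the simply-laced finite type case, your route and the paper's are both based on the Hernandez--Leclerc isomorphism $\hcalA\simeq\calK_t(\Cg^0)$ and Varagnolo--Vasserot positivity, but the paper's argument is shorter and sidesteps precisely the ``non-formal step'' you flag. The paper does \emph{not} try to match $\ttP^\ii(\bsu)$ with a standard module. Instead it observes that each single cuspidal element $\ttP^\ii_k$ is itself a member of the global basis $\bfG$: it is $c$-invariant (noted right after \eqref{eq: PBW element}) and lies in $\hL$, so by Theorem~\ref{Thm: global basis}\,\eqref{it: global (iii)} (equivalently, Lemma~\ref{lem:ggb} applied with $\bsu$ a unit vector) one has $\ttP^\ii_k\in\bfG$. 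Hence $\ttP^\ii(\bsu)$ is, up to a positive power of $q$, simply a product of global basis elements. Under the isomorphism with $\calK_t(\Cg^0)$ the normalized global basis becomes the basis of $(q,t)$-characters of simple modules, and \cite{VV03} gives that the \emph{structure constants} of this basis are non-negative. Thus the expansion of $\ttP^\ii(\bsu)$ in $\bfG$ has coefficients in $\Z_{\ge0}[q^{\pm1}]$, and intersecting with the $q\Z[q]$ already provided by Lemma~\ref{lem: Gb} yields $g_{\bsu,\bsu'}\in q\Z_{\ge0}[q]$.

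The upshot is that the identification of the ordered products $\ttP^\ii(\bsu)$ with standard modules---which for a general braid word $\ttb$ is far from clear, and which you yourself isolate as the delicate point---is unnecessary: positivity of structure constants of the global basis is all that is used. Your argument, as written, leaves that identification unjustified, so for the simply-laced finite case the paper's approach is both simpler and complete where yours has a gap.
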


\Prop
{\rm Conjecture~\ref{conj: quantum positive}} holds for any simply-laced finite type $\g$.    
\enprop

\begin{proof}
Note that (a) $\hcalA_\bfA$ is isomorphic to \emph{the quantum Grothendieck ring $\calK_t(\Cg^0)$} of a \emph{skeleton category} over quantum affine algebra $U_q'(\hg^{(1)})$, introduced in~\cite{Nak04,VV02,Her04}, 
(b) the normalized global basis $\tbfG$ corresponds to the basis $\bfL_t$ of $\calK_t(\Cg^0)$  
consisting of $(q,t)$-character of simple modules by \cite[Theorem 7.4]{KKOP24}, 
whose structure coefficients are contained in $q\Z_{\ge 0}[q]$ proved in  \cite{VV03}  (see also \cite{FHOO,FHOO2}).
Since each $\ttP^\ii_k$ is an element in $\bfG$ by~\eqref{eq: ttPi b triangular} and Lemma~\ref{lem:ggb}, the assertion follows. 
\end{proof}

\section{Applications} \label{sec: Application}
In this section, we investigate subalgebras of $\hcalA$ one step further,
which are also constructed by the braid symmetries $\{ \TT_i \}_{i \in I}$.

\smallskip
Let us first introduce subalgebras $\hcalA$ associated with
a pair of elements in $\ttB^+$:

\begin{definition}
For elements $\ttb,\ttb' \in \ttB^+$ and $B=\Qqh, \Zq,\Z[q]$, we define the subalgebras of $\hcalA_{\ge0}$:
$$ 
\hAB( \star,\ttb) \seteq \TT_\ttb \bl \hABp[{\ge 0}] \br  \  \text{ and } \ 
\hAB(\ttb,\ttb') \seteq  \hAB(\ttb) \cap \hAB(\star,\ttb')
=\TT_\ttb\bl\hABp[{<0}]\br\cap\TT_{\ttb'}\bl\hABp[{\ge0}]\br.
$$
\end{definition} 
 
Similarly to $\hAB(\ttb)$, we have
\bnum
\item $\hAB(\star,\ttb)$ has a $B$-basis
  $\bfG(\star,\ttb) \seteq \bfG\cap \hAB(\star,\ttb)$,
\item $\hAB(\ttb,\ttb')$ has a $B$-basis $\bfG(\ttb,\ttb') \seteq
  \bfG\cap\hA(\ttb,\ttb')$. 
\ee

\subsection{Tensor product decompositions of $\hcalA_{\ge0}$ corresponding to $\ttb$} In this subsection, we show that the multiplications between 
$\hcalA(\star,\ttb)$ and $\hcalA(\ttb)$ induce vector space isomorphisms from $\hcalA(\star,\ttb) \tens \hcalA(\ttb)$ and 
$ \hcalA(\ttb) \tens \hcalA(\star,\ttb)$ to $\hcalA_{\ge0}$, which can be understood as bosonic-analogues of \cite{Kimura16,Tani17} about $A_q(\n(w))$
 for a Weyl group element $w$.

\Prop \label{prop: iso via multiplication}
For any $B=\Qqh,\Zq, \Z[q]$  and $\ttb\in\ttB^+$, we have the following:
\bnum
\item \label{it: Tb}
For any $m\le0$, we have
$$\TT_\ttb\bl\hABp[{<0}]\br=\Bigl(\TT_\ttb(\hABp[{<0}])\cap\hABp[\ge m]\Bigr)\cdot
\hABp[<m].$$
\item \label{it: m le 0}
For any $m\le0$, we have
$$\hABp[{\ge m}]=\TT_\ttb\bl\hABp[{\ge0}]\br\cdot
\Bigl(\TT_\ttb\bl\hABp[{<0}]\br\cap\hABp[\ge m]\Bigr).$$
\ee
In particular, the multiplication induces an isomorphism
\eq\hAB(\star,\ttb)\tens\hAB(\ttb)\isoto\hABp[\ge0].
\label{eq:bbb}
\eneq
\enprop
\Proof
Let us first prove~\eqref{it: Tb}.
By Proposition~\ref{prop: hA(b) span},
we have
$$\TT_\ttb\bl\hABp[{<0}]\br
=\Bigl(\TT_\ttb\bl\hABp[{<0}]\br\cap\hABp[\ge 0]\Bigr)\cdot
\hABp[{<0}].$$
Hence, we have
\eqn
\TT_\ttb\bl\hABp[{<0}]\br
&&=\Bigl(\TT_\ttb(\hABp[{<0}])\cap\hABp[\ge 0]\Bigr)\cdot
\hAB[m,-1]\cdot\hABp[<m].
\eneqn
Set $K=\Bigl(\TT_\ttb(\hABp[{<0}])\cap\hABp[\ge 0]\Bigr)\cdot
\hAB[m,-1]$.
Then Lemma~\ref{lem:int}
implies
$$\TT_\ttb\bl\hABp[{<0}]\br\cap \hABp[\ge m]
=\Bigl(K\tens_B\hABp[<m]\Bigr)\cap\Bigl(\hABp[\ge m]\tens_BB\Bigr)
=K\tens_BB.$$
Hence we obtain~\eqref{it: Tb}.

\medskip
Let us prove~\eqref{it: m le 0}.
By~\eqref{it: Tb}, we have
\eqn
&&\TT_\ttb\bl\hABp[\ge0]\br\cdot\Bigl(\TT_\ttb(\hABp[{<0}])\cap\hABp[\ge m]\Bigr)\cdot
\hABp[<m]\\
&&\hs{10ex}=\TT_\ttb\bl\hABp[\ge0]\br\cdot\TT_\ttb\bl\hABp[<0]\br
=\hAB=\hABp[\ge m]\cdot\hABp[<m].
\eneqn
Set
$K=\TT_\ttb\bl\hABp[\ge0]\br\cdot \left( \TT_\ttb\bl \hABp[{<0}]\br \cap\hABp[\ge m] \right)$.
Then we have
$$K\tens_B\hABp[<m]\isoto\hABp[\ge m]\tens_B\hABp[<m],$$
which implies that $K\isoto\hABp[\ge m]$.
\QED

Applying $c$ to \eqref{eq:bbb}.
we obtain the following corollary.

\begin{corollary}
For any element $\ttb \in \ttB^+$ and $B =\Q(q^{1/2}),\Zq$, the multiplication in $\hcalA$ also gives another isomorphism of $B$-modules
\begin{align} \label{eq: mult b inverse}
\hcalA_{ B}(\ttb) \tens_{B} \hcalA_{B}(\star,\ttb) \longrightarrow (\hcalA_B)_{\ge 0}.
\end{align}
\end{corollary}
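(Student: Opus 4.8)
The plan is to obtain this corollary from Proposition~\ref{prop: iso via multiplication} by applying the automorphism-theoretic tool already in hand, namely the map $c$. Recall from \eqref{eq: c property} that $c$ is an anti-automorphism of $\hcalA$ satisfying $c(xy)=q^{(\wt(x),\wt(y))}c(y)c(x)$; in particular, for a homogeneous subspace $V$ the multiplication map $V'\tens V''\to V'V''$ is turned by $c$ into the multiplication map $c(V'')\tens c(V')\to c(V'')c(V')$, with the order reversed. Thus, once we know that $c$ preserves each of the three relevant subspaces — $\hcalA_B(\ttb)$, $\hcalA_B(\star,\ttb)$, and $(\hcalA_B)_{\ge 0}$ — the isomorphism \eqref{eq:bbb} of Proposition~\ref{prop: iso via multiplication}, namely $\hAB(\star,\ttb)\tens\hAB(\ttb)\isoto\hABp[\ge0]$, immediately yields the desired isomorphism $\hcalA_B(\ttb)\tens_B\hcalA_B(\star,\ttb)\isoto(\hcalA_B)_{\ge 0}$ by applying $c$ to it.

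So the first step is to record the $c$-invariance of the three subspaces. For $(\hcalA_B)_{\ge 0}$ with $B=\Qqh$ this is clear from the definition of $c$ (it sends $f_{i,m}$ to a scalar times $\overline{f_{i,m}}=f_{i,m}$), and for $B=\Zq$ it follows since $c$ preserves $\hcalA_\bbA$ by Proposition~\ref{prop: hcalA integral form}(iii) and clearly preserves the grading, hence preserves $(\hcalA_\bbA)_{\ge 0}=\hcalA_{\ge 0}\cap\hcalA_\bbA$. For $\hcalA_B(\ttb)=(\hcalA_B)_{\ge 0}\cap\TT_\ttb\bl(\hcalA_B)_{<0}\br$ and $\hcalA_B(\star,\ttb)=\TT_\ttb\bl(\hcalA_B)_{\ge 0}\br$, one uses that $c$ commutes with each $\TT_i$ by Lemma~\ref{lem: T_i and autos}\eqref{it: T_i and c}, hence with $\TT_\ttb$, together with the $c$-invariance of $(\hcalA_B)_{<0}$ and $(\hcalA_B)_{\ge 0}$ and of $\hcalA_B$ (using Proposition~\ref{prop:lattinv} for the $\Zq$-lattice invariance under $\TT_\ttb^{\pm1}$). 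This gives $c\bl\hcalA_B(\ttb)\br=\hcalA_B(\ttb)$ and $c\bl\hcalA_B(\star,\ttb)\br=\hcalA_B(\star,\ttb)$.

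The second step is purely formal: start from the isomorphism \eqref{eq:bbb}, i.e.\ the multiplication map
$$\mu\cl \hcalA_B(\star,\ttb)\tens_B\hcalA_B(\ttb)\longrightarrow (\hcalA_B)_{\ge 0},\qquad x\tens y\longmapsto xy,$$
which is bijective. Applying $c$ and using \eqref{eq: c property}, for homogeneous $x\in\hcalA_B(\star,\ttb)$ and $y\in\hcalA_B(\ttb)$ we get $c(xy)=q^{(\wt(x),\wt(y))}c(y)c(x)$ with $c(y)\in\hcalA_B(\ttb)$ and $c(x)\in\hcalA_B(\star,\ttb)$; since $c$ restricts to a bijection on $(\hcalA_B)_{\ge 0}$ and on each of the two subspaces, and since the scalar $q^{(\wt(x),\wt(y))}$ is a unit in $B$, the map $y'\tens x'\mapsto y'x'$ from $\hcalA_B(\ttb)\tens_B\hcalA_B(\star,\ttb)$ to $(\hcalA_B)_{\ge 0}$ is also a $B$-module isomorphism. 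This is exactly \eqref{eq: mult b inverse}.

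I do not expect a genuine obstacle here; the only point requiring a little care is bookkeeping with the scalar factors $q^{(\wt(x),\wt(y))}$ coming from the twisted multiplicativity of $c$ — one must note these are invertible in $B$ so that $c$, while not quite intertwining the two multiplication maps on the nose, still carries one bijection to the other — and making sure the $c$-invariance of all three subspaces is correctly justified over both $\Qqh$ and $\Zq$ (the case $\Z[q]$ is excluded in the statement precisely because $\hL=\LuphA$ is only a $\Z[q]$-submodule, not a subalgebra, so the tensor-product statement is not claimed there). Since $c$ is only an anti-automorphism of $\bfk$-algebras (it inverts $q^{1/2}$), one should also note that \eqref{eq: mult b inverse} is stated as an isomorphism of $B$-modules, which is all that is needed and all that survives.
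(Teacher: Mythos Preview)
Your proposal is correct and takes exactly the same approach as the paper, which simply says ``Applying $c$ to \eqref{eq:bbb}''; you have fleshed out precisely the details (the $c$-invariance of the three subspaces via Lemma~\ref{lem: T_i and autos}\eqref{it: T_i and c} and Proposition~\ref{prop: hcalA integral form}(iii), and the care needed with the semi-linearity of $c$ and the unit scalars $q^{(\wt(x),\wt(y))}$) that the paper leaves implicit.
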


\subsection{Bosonic analogue of quantum twist maps} In this subsection, we establish the anti-isomorphism between $\hcalA(\ttb)$
and $\hcalA(\ttb^\rev)$ preserving their PBW-bases and global bases. It can be understood as a bosonic analogue of quantum twist map in~\cite{LY19,KimuOya18}. 

\smallskip 

Let us recall (anti)-automorphisms $\star$ and $\ocalD$ on $\hcalA$ given in~\eqref{eq: autos}. For $\ttb \in \ttB^+$, let us define 
\begin{align}
\Theta_\ttb \seteq \TT_\ttb \circ \star \circ \ocalD
\end{align}
which is a $\bfk$-algebra anti-automorphism on $\hcalA$.
Moreover, $\Theta_\ttb$ induces an automorphism of the global basis $\bfG$.
We can check easily
\eq
\TT_\ttb \circ \star \circ \ocalD = \star \circ \ocalD \circ
  \TT_\ttb^{\star}. \label{eq:theta}
\eneq
Since $\Theta_\ttb(\hA_{<0})=\TT_\ttb(\hA_{\ge0})$ and
  $\Theta_\ttb(\hA_{\ge0})=\TT_\ttb(\hA_{<0})$,
we have
\begin{align*} 
\hcalA(\ttb) = \hcalA_{\ge 0} \cap \Theta_\ttb \bl \hcalA_{\ge 0} \br \qtq 
\hcalA(\star,\ttb) = \Theta_\ttb \bl\hcalA_{< 0}\br. 
\end{align*}

From $(\star \circ\ocalD)^2={\rm id}$,
\eqref{eq:theta} and $\TT_\ttb^\star=(\TT_{\ttb^\rev})^{-1}$
(see Definition~\ref{def:rev}),
we have
$$\Theta_{\ttb^\rev} \circ \Theta_{\ttb}={\rm id}.
$$
Thus $\Theta_{\ttb^\rev}$  induces an isomorphism 
$$
\Theta_{\ttb^\rev}\cl \hAB(\ttb) \isoto \hAB(\ttb^\rev). 
$$

\begin{proposition} \label{prop: cuspidal to cuspidal}
For $\ttb \in \ttB^+$ and $\ii=(i_1,\ldots,i_r) \in \Seq(\ttb)$, we have
$$
\Theta_{\ttb^\rev}(\TT_{i_1} \ldots \TT_{i_{k-1}} \vph_0(\Ang{i_k}) )= \TT_{i_r} \ldots \TT_{i_{k+1}}  \vph_0(\Ang{i_k}) 
\quad\text{for $1 \le k\le r$.}
$$
\end{proposition}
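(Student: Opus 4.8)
The plan is to unwind $\Theta_{\ttb^\rev}=\TT_{\ttb^\rev}\circ\star\circ\ocalD$ directly, transporting $\ocalD$ and $\star$ past the product of braid symmetries by means of the commutation relations of Lemma~\ref{lem: T_i and autos} (namely $\ocalD\circ\TT_i=\TT_i\circ\ocalD$ and $\TTiv{i}=\star\circ\TT_i\circ\star$), and then using the telescoping $\TT_i\circ\TTiv{i}=\id$ from Theorem~\ref{thm: T-braid}. Write $\ii=(i_1,\dots,i_r)$, so that $\ttb^\rev=r_{i_r}\cdots r_{i_1}$ and $\TT_{\ttb^\rev}=\TT_{i_r}\circ\TT_{i_{r-1}}\circ\cdots\circ\TT_{i_1}$, and set $x\seteq\TT_{i_1}\cdots\TT_{i_{k-1}}\,\vph_0(\Ang{i_k})$, which is the cuspidal element $\ttP^\ii_k$ of \eqref{eq: cuspidal}.

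First I would apply $\ocalD$: since $\ocalD\circ\TT_i=\TT_i\circ\ocalD$ for every $i$ and $\ocalD(\vph_0(\Ang{i_k}))=\vph_1(\Ang{i_k})$ (because $\ocalD$ is the $\bfk$-algebra automorphism with $\ocalD(f_{j,p})=f_{j,p+1}$ and $\vph_m(\Ang{j})=q_j^{1/2}f_{j,m}$), this gives $\ocalD(x)=\TT_{i_1}\cdots\TT_{i_{k-1}}\,\vph_1(\Ang{i_k})$. Next I would apply $\star$: the identity $\TTiv{i}=\star\circ\TT_i\circ\star$ is equivalent to $\star\circ\TT_i=\TTiv{i}\circ\star$ (as $\star^2=\id$), and transporting $\star$ past $\TT_{i_1},\dots,\TT_{i_{k-1}}$ one factor at a time converts each $\TT_{i_j}$ into $\TTiv{i_j}$ \emph{in the same order} — there is no reversal, since this uses the operator identity rather than the anti-homomorphism property of $\star$ on a product of algebra elements. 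Combined with $\star(\vph_1(\Ang{i_k}))=\vph_{-1}(\Ang{i_k})$, we obtain $\star\circ\ocalD(x)=\TTiv{i_1}\cdots\TTiv{i_{k-1}}\,\vph_{-1}(\Ang{i_k})$.

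Finally I would apply $\TT_{\ttb^\rev}=\TT_{i_r}\cdots\TT_{i_2}\TT_{i_1}$: the successive pairs $\TT_{i_1}\circ\TTiv{i_1},\,\TT_{i_2}\circ\TTiv{i_2},\dots,\TT_{i_{k-1}}\circ\TTiv{i_{k-1}}$ cancel by Theorem~\ref{thm: T-braid}, leaving the operator $\TT_{i_r}\circ\cdots\circ\TT_{i_k}$. Hence $\Theta_{\ttb^\rev}(\ttP^\ii_k)=\TT_{i_r}\cdots\TT_{i_k}\,\vph_{-1}(\Ang{i_k})=\TT_{i_r}\cdots\TT_{i_{k+1}}\bigl(\TT_{i_k}\vph_{-1}(\Ang{i_k})\bigr)=\TT_{i_r}\cdots\TT_{i_{k+1}}\,\vph_0(\Ang{i_k})$, where the last step uses $\TT_i(f_{i,m})=f_{i,m+1}$ from \eqref{eq: T_i}. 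This is exactly the asserted formula, valid for all $1\le k\le r$.

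There is no genuine obstacle; the only delicate point is the bookkeeping in the $\star$-step: one must be sure that moving the anti-automorphism $\star$ past $\TT_{i_1}\circ\cdots\circ\TT_{i_{k-1}}$ yields $\TTiv{i_1}\circ\cdots\circ\TTiv{i_{k-1}}$ in the same order, so that it telescopes correctly against the left end of $\TT_{\ttb^\rev}$ in the last step, and that the scalar $q_{i_k}^{1/2}$ hidden in $\vph_m(\Ang{i_k})$ is preserved throughout (which it is, since $\star$, $\ocalD$ and all $\TT_i$ are $\bfk$-linear (anti-)automorphisms with $\bfk=\Q(q^{1/2})$). One could also phrase the computation via \eqref{eq:theta} and $\TT_{\ttb^\rev}^\star=(\TT_\ttb)^{-1}$, but the direct route above is the most transparent.
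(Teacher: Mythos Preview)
Your proof is correct and follows essentially the same approach as the paper's: both unwind $\Theta_{\ttb^\rev}=\TT_{i_r}\cdots\TT_{i_1}\circ\star\circ\ocalD$ using the commutation relations $\ocalD\circ\TT_i=\TT_i\circ\ocalD$ and $\star\circ\TT_i=\TTiv{i}\circ\star$, then telescope the resulting $\TT_{i_j}\circ\TTiv{i_j}$ pairs. The paper organizes the calculation slightly differently by first recording the identity $\TT_i\circ\star\circ\ocalD(f_{i,m})=f_{i,-m}$ and cancelling $\TT_{i_j}\circ\star\circ\ocalD\circ\TT_{i_j}=\star\circ\ocalD$ directly, but this is just a repackaging of your three steps into one.
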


\begin{proof}
It can be easily checked that
$$
\TT_i \circ \star \circ \ocalD (f_{i,m})= f_{i,-m}.
$$
Hence we have 
\begin{align*}
\Theta_{\ttb^\rev}(\TT_{i_1} \ldots \TT_{i_{k-1}} \vph_0(\Ang{i_k}) ) & = 
(\TT_{i_r} \ldots \TT_{i_1} \circ \star \circ \ocalD) (\TT_{i_1} \ldots \TT_{i_{k-1}} \vph_0(\Ang{i_k}) )     \\
& = (\TT_{i_r} \ldots \TT_{i_k} \circ \star \circ \ocalD)(\vph_0(\Ang{i_k})) \\
& = (\TT_{i_r} \ldots \TT_{i_{k+1}})(\vph_0(\Ang{i_k})). \qedhere
\end{align*}
\end{proof}

From Proposition~\ref{prop: cuspidal to cuspidal}, we have the following corollary.

\begin{corollary}\label{cor:Theta}
For $\ttb \in \ttB$, $\ii \in \Seq(\ttb)$ and $\bsu \in \Z_{\ge 0}^r$, we have
$$
\Theta_{\ttb^\rev}(\ttP^\ii(\bsu)) = \ttP^{\ii^\rev}(\bsu^\rev). 
$$
\end{corollary}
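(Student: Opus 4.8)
The plan is to reduce the statement to the single-cuspidal identity of Proposition~\ref{prop: cuspidal to cuspidal} and then propagate it through the product defining $\ttP^\ii(\bsu)$, using that $\Theta_{\ttb^\rev}$ is an algebra anti-automorphism. Recall that for $\ii=(i_1,\ldots,i_r)$ and $\bsu=(u_1,\ldots,u_r)$ we have, by \eqref{eq: PBW element} and \eqref{def:cuspp},
\[
\ttP^\ii(\bsu)=\oprod_{k\in[1,r]}\diPP{\ii}{u_k}{k}
=\diPP{\ii}{u_r}{r}\cdots\diPP{\ii}{u_2}{2}\,\diPP{\ii}{u_1}{1},
\]
where $\diPP{\ii}{u_k}{k}=\TT_{i_1}\cdots\TT_{i_{k-1}}\vph_0(\Ang{i_k^{u_k}})$. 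So the first step is to observe that it suffices to compute $\Theta_{\ttb^\rev}$ on each factor $\diPP{\ii}{u_k}{k}$ and then reverse the order of the product.

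The second step is to upgrade Proposition~\ref{prop: cuspidal to cuspidal} from $\Ang{i_k}$ to $\Ang{i_k^{u_k}}$. This is immediate once we note that $\diPP{\ii}{u_k}{k}=q_{i_k}^{u_k(u_k-1)/2}(\ttP^\ii_k)^{u_k}$ by \eqref{def:cuspp}, that $\Theta_{\ttb^\rev}$ is a $\bfk$-algebra anti-automorphism hence $\Qh$-linear and multiplicative up to order reversal on a single generator's powers (a power of one element is unaffected by order reversal), and that $\vph_0(\Ang{i_k^{u_k}})=q_{i_k}^{u_k(u_k-1)/2}\vph_0(\Ang{i_k})^{u_k}$ on the $\hcalA[0]$ side by \eqref{eq: Ang i n}. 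Therefore
\[
\Theta_{\ttb^\rev}\bl\diPP{\ii}{u_k}{k}\br
=q_{i_k}^{u_k(u_k-1)/2}\Theta_{\ttb^\rev}(\ttP^\ii_k)^{u_k}
=q_{i_k}^{u_k(u_k-1)/2}(\TT_{i_r}\cdots\TT_{i_{k+1}}\vph_0(\Ang{i_k}))^{u_k}
=\TT_{i_r}\cdots\TT_{i_{k+1}}\vph_0(\Ang{i_k^{u_k}}),
\]
where the last equality again uses \eqref{eq: Ang i n} together with the fact that the $\TT_j$ are algebra homomorphisms. Note that in the reversed sequence $\ii^\rev=(i_r,\ldots,i_1)$, the position of $i_k$ is $r+1-k$, and the corresponding cuspidal factor $\diPP{\ii^\rev}{u_k}{r+1-k}$ is exactly $\TT_{i_r}\cdots\TT_{i_{k+1}}\vph_0(\Ang{i_k^{u_k}})$; so $\Theta_{\ttb^\rev}$ sends the $k$-th factor for $\ii$ to the $(r+1-k)$-th factor for $\ii^\rev$.

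The third step assembles these. Applying the anti-automorphism $\Theta_{\ttb^\rev}$ to the product above reverses the order, so
\[
\Theta_{\ttb^\rev}(\ttP^\ii(\bsu))
=\Theta_{\ttb^\rev}\bl\diPP{\ii}{u_1}{1}\br\Theta_{\ttb^\rev}\bl\diPP{\ii}{u_2}{2}\br\cdots\Theta_{\ttb^\rev}\bl\diPP{\ii}{u_r}{r}\br
=\oprod_{j\in[1,r]}\diPP{\ii^\rev}{(u^\rev)_j}{j}
=\ttP^{\ii^\rev}(\bsu^\rev),
\]
using in the middle step that the factor indexed by $k$ for $\ii$ becomes the factor indexed by $r+1-k$ for $\ii^\rev$, that $(\bsu^\rev)_j=u_{r+1-j}$, and that after reversal the factors appear in increasing order of their $\ii^\rev$-index, which is precisely the order prescribed by $\oprod$ in \eqref{eq: PBW element}. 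I do not anticipate a genuine obstacle here; the only point requiring care is bookkeeping of the index shift $k\mapsto r+1-k$ and verifying the $q$-power from \eqref{eq: Ang i n} matches on both sides, which it does because the anti-automorphism does not reorder a power of a single element. One should also double-check that Proposition~\ref{prop: cuspidal to cuspidal} is being invoked for $\ttb$ with sequence $\ii$ (not $\ttb^\rev$), which is consistent with the displayed statement there.
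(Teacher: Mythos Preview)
Your proof is correct and follows exactly the approach the paper intends: the paper simply states that the corollary follows from Proposition~\ref{prop: cuspidal to cuspidal}, and you have spelled out the straightforward details using that $\Theta_{\ttb^\rev}$ is a $\bfk$-algebra anti-automorphism together with \eqref{def:cuspp} and \eqref{eq: Ang i n}. One cosmetic remark: in your final step the factors $\diPP{\ii^\rev}{u_k}{r+1-k}$ appear from left to right with $\ii^\rev$-index running $r,r-1,\ldots,1$, i.e.\ in \emph{decreasing} order, which is indeed the order dictated by $\oprod$; your phrase ``increasing order'' is a slip of the pen but does not affect the argument.
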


Hence the anti-isomorphism $\Theta_{\ttb^\rev}$ sends $\bfG(\ttb)$ to $\bfG(\ttb^\rev)$ bijectively, which can be understood as a bosonic analogue of
\cite[Theorem 3.8]{KimuOya18}:

\Prop \label{prop: twist map}
For $\ttb  \in \ttB^+$, $\ii \in \Seq(\ttb)$  and $\rmG_\ttb(\bsu,\ii) \in \bfG(\ttb)$, we have
$$
\Theta_{\ttb^\rev}\bl\rmG(\cb(\ii,\bsu)\br= \rmG\bl\cb( \ii^\rev,\bsu^\rev )\br. 
$$
\enprop

\begin{proof}
  It follows from
  $$\Theta_{\ttb^\rev}\bl\rmG(\cb(\ii,\bsu)\br
  \equiv\Theta_{\ttb^\rev}(\ttP^\ii(\bsu))= \ttP^{\ii^\rev}(\bsu^\rev)
  \equiv
  \rmG\bl\cb(\ii^\rev,\bsu^\rev)\br
\qmodLuphA.\qedhere$$
\QED

\providecommand{\bysame}{\leavevmode\hbox to3em{\hrulefill}\thinspace}
\providecommand{\MR}{\relax\ifhmode\unskip\space\fi MR }
% \MRhref is called by the amsart/book/proc definition of \MR.
\providecommand{\MRhref}[2]{%
  \href{http://www.ams.org/mathscinet-getitem?mr=#1}{#2}
}
\providecommand{\href}[2]{#2}

\end{document}